\pdfoutput=1
\documentclass[11pt,a4paper,oneside]{article}
\usepackage[utf8]{inputenc} % Allows to use UTF8 encoded symbols directly (e.g.  ä ö ü ß é)
\usepackage[english]{babel} % English Language settings

% --- Fonts and symbols ---
\usepackage{lmodern}
\usepackage[T1]{fontenc}
\usepackage{amsmath,mathtools,amsfonts,amssymb} % AMS Math fonts and symbols
\usepackage{slashed} % Slashed symbols e.g. to typeset Dirac operators
\usepackage{mathrsfs}
\usepackage[babel=true]{microtype}
\usepackage[autostyle=true]{csquotes} % Correctly typeset quotes via \enquote

\usepackage{comment}

% -- Page Size ---
\usepackage[a4paper,marginparwidth=2cm,margin=1in]{geometry}

%--- Theorem tools ---
\usepackage{amsthm}
\usepackage{thmtools}
%\usepackage{thm-restate}
%
%--- Hyperref ---
\usepackage[pdfusetitle]{hyperref}
%
%--- Titlesec ---
% \usepackage{titlesec}
% \titleformat*{\section}{\large\scshape}
% \titleformat{\subsection}[runin]{\normalsize\bfseries}{\thesubsection}{5pt}{}[.]

%--- Biblatex ---
\usepackage[citestyle=numeric-comp,
            bibstyle=numeric-comp,
            backend=biber,
            url=false,
            doi=true,
            isbn=false,
            giveninits=true,
            maxcitenames=100,
            maxbibnames=100,
            sorting=nyt]{biblatex}
\AtEveryBibitem{\clearfield{month}}
\AtEveryBibitem{\clearfield{day}}
\AtEveryBibitem{%
  \ifentrytype{thesis}
    {}
    {
      \ifentrytype{online}{}
      {
      \clearfield{url}%
      \clearfield{urldate}%
      }
    }%
}

\addbibresource{literature.bib}

% % --- TikZ ---
\usepackage{tikz}
\usetikzlibrary{cd} % Commutative diagrams

% --- List tools ---
\usepackage[shortlabels]{enumitem}
\newlist{myenumi}{enumerate}{1}
\setlist[myenumi,1]{label=\upshape(\roman*)}
\newlist{myenuma}{enumerate}{1}
\setlist[myenuma,1]{label=\upshape(\alph*)}
%
% --- Color ---
\usepackage{color}
%
% --- todonotes ---
\usepackage{todonotes}
%
% --- Theorems ---:
\declaretheorem[name=Theorem, numberwithin=section]{thm}
\declaretheorem[name=Theorem, numbered=no]{thm*}
\declaretheorem[name=Lemma,numberlike=thm]{lem}
\declaretheorem[name=Lemma,numbered=no]{lem*}
\declaretheorem[name=Corollary,numberlike=thm]{cor}
\declaretheorem[name=Proposition,numberlike=thm]{prop}
\declaretheorem[name=Definition,numberlike=thm, style=definition]{defi}

\declaretheorem[name=Conjecture,numberlike=thm, style=remark]{conj}

\declaretheorem[name=Example, numberlike=thm, style=remark]{ex}
\declaretheorem[name=Remark, numberlike=thm, style=remark]{rem}

\declaretheorem[name=Theorem]{thmx}
 % "letter-numbered" theorems

\declaretheorem[name=Property, numberlike=thmx, style=remark]{property}

%
% --- Equation setup ---
\numberwithin{equation}{section}
\allowdisplaybreaks[1]
%
% --- Cleverref ---:

\usepackage[noabbrev]{cleveref} %Important to load after numberwithin!
\crefname{figure}{Figure}{Figures}
\crefname{table}{Table}{Tables}
\crefname{thm}{Theorem}{Theorems}
\crefname{thmx}{Theorem}{Theorems}
\crefname{lem}{Lemma}{Lemmas}
\crefname{defi}{Definition}{Definitions}
\crefname{setup}{Setup}{Setups}
\crefname{conj}{Conjecture}{Conjectures}
\crefname{quest}{Question}{Questions}
\crefname{cor}{Corollary}{Corollaries}
\crefname{corx}{Corollary}{Corollaries}
\crefname{prop}{Proposition}{Propositions}
\crefname{ex}{Example}{Examples}
\crefname{rem}{Remark}{Remarks}
\crefname{section}{Section}{Sections}
\crefname{subsection}{Subsection}{Subsections}
\crefname{chapter}{Chapter}{Chapters}
\crefname{appendix}{Appendix}{Appendices}
\crefname{property}{Property}{Properties}
\crefdefaultlabelformat{#2\textup{#1}#3}
\creflabelformat{enumi}{(#2#1#3)}
%
% xparse
\usepackage{xparse}
%
% --- Own commands (rely on xparse) ---
\usepackage{ourmacros}
\newcommand{\Sc}{\scal}
\newcommand{\wid}{\width}

\newcommand{\E}{\mathcal{E}}
\newcommand{\Frthl}{\mathcal{F}}
\newcommand{\p}{\partial}

 \newcommand{\eps}{\varepsilon}
\newcommand{\Ha}{\mathcal{H}}
\newcommand{\NPSC}{\mathrm{NPSC^+}}

\usepackage{authblk}

% --- Title matter ---
\title{Nonnegative scalar curvature on manifolds with at least two ends}
\author{Simone Cecchini\thanks{Funded by the Deutsche Forschungsgemeinschaft (DFG, German Research Foundation) through the Priority Programme ``Geometry at Infinity'' (SPP 2026, CE~393/1-1).}}
\affil{Texas A\&M University\\USA\\\href{mailto:cecchini@tamu.edu}{cecchini@tamu.edu}\\\href{https://simonececchini.org}{simonececchini.org}}
\affil{University of Göttingen\\Mathematisches Institut\\Germany}
\author{Daniel Räde\thanks{Funded by a Doctoral Scholarship of the 'Studienstiftung des deutschen Volkes'.}}
\affil{Augsburg University\\Institut f\"ur Mathematik\\Germany\\\href{mailto:draede.math@gmail.com}{draede.math@gmail.com}}
\author{Rudolf Zeidler\thanks{Funded by the Deutsche Forschungsgemeinschaft (DFG, German Research Foundation) – Project-ID 427320536 – SFB 1442, as well as under Germany’s Excellence Strategy EXC 2044  390685587, Mathematics Münster: Dynamics–Geometry–Structure, and through the Priority Programme ``Geometry at Infinity'' (SPP 2026, ZE~1123/2-2).}}
\affil{University of Münster\\Mathematisches Institut\\Germany\\\href{mailto:math@rzeidler.eu}{math@rzeidler.eu}\\\href{https://www.rzeidler.eu}{www.rzeidler.eu}}
\date{}
\hypersetup{
  colorlinks=true,
  allcolors=blue,
%  pdfauthor=<Author>,
%  pdftitle=<Title>
}
% \date{\today}
%
% Bibliography
%\bibliography{literature.bib}
%
% --- Document ---

\begin{document}

\maketitle

\begin{abstract}
Let $M$ be an orientable connected $n$-dimensional manifold with $n\in\{6,7\}$ and let $Y\subset M$ be a two-sided closed connected incompressible hypersurface which does not admit a metric of positive scalar curvature (abbreviated by psc). 
Moreover, suppose that the universal covers of $M$ and $Y$ are either both spin or both non-spin.
Using Gromov's $\mu$-bubbles, we show that $M$ does not admit a complete metric of psc.
We provide an example showing that the spin/non-spin hypothesis cannot be dropped from the statement of this result.
This answers, up to dimension $7$, a question by Gromov for a large class of cases.
Furthermore, we prove a related result for submanifolds of codimension two.
We deduce as special cases that, if $Y$ does not admit a metric of psc and $\dim(Y) \neq 4$, then $M := Y\times\mathbb{R}$ does not carry a complete metric of psc and $N := Y \times \mathbb{R}^2$ does not carry a complete metric of uniformly psc provided that $\dim(M) \leq 7$ and $\dim(N) \leq 7$, respectively.
This solves, up to dimension $7$, a conjecture due to Rosenberg and Stolz in the case of orientable manifolds.
\end{abstract}

% \keywords{Scalar curvature, $\mu$-bubbles, splitting theorem, band width estimate}

% \listoftodos

\section{Introduction}

In the 1994 survey article~\cite[Section 7]{RoS94}, Rosenberg and Stolz proposed the following conjectures concerning the (non-)existence of \emph{positive scalar curvature} (abbreviated by \emph{psc}) metrics on certain non-compact manifolds:

\begin{conj}\label{conj:main1}
Let $Y$ be a closed manifold of dimension $(n-1) \neq4$ which does not admit a metric of positive scalar curvature.
Then $Y\times \R$ does not admit a complete metric of positive scalar curvature.
\end{conj}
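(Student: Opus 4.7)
The plan is to prove this by contradiction, reducing to the paper's main ``incompressible hypersurface'' theorem announced in the abstract, whose proof uses Gromov's $\mu$-bubbles. Suppose $M := Y\times\R$ carries a complete metric $g$ of positive scalar curvature. The crucial observation is that $Y_0 := Y\times\{0\}$ is a two-sided, closed, connected hypersurface in $M$ whose inclusion is a homotopy equivalence; in particular $Y_0$ is incompressible. Moreover $\widetilde{M}\cong \widetilde{Y}\times\R$, so the universal covers of $M$ and $Y$ are simultaneously spin or simultaneously non-spin. Thus both standing hypotheses of the main theorem hold for the pair $(M,Y_0)$, and for $\dim M\leq 7$ (equivalently $\dim Y\leq 6$) the conjecture would follow directly.

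To carry out the argument concretely in this product setting, I would fix a large $L>0$ and set $N_L := Y\times[-L,L]$. Let $\rho: N_L\to[-L,L]$ be a smoothing of the $\R$-projection. Following Gromov, I would choose a smooth function $h:(-L,L)\to\R$ with $h(t)\to\mp\infty$ as $t\to\pm L$ and satisfying $2|h'|+h^2<\mathrm{Sc}_g$ on $N_L$; this is possible once $L$ is large relative to $\inf\mathrm{Sc}_g>0$. Then minimize the $\mu$-bubble functional
\begin{equation*}
\mathcal{A}(\Omega) \;=\; \Ha^{n-1}(\p^*\Omega) - \int_\Omega (h\circ\rho)\,d\mathrm{vol}_g
\end{equation*}
over Caccioppoli sets with prescribed behavior near $\p N_L$ (containing $Y\times\{-L\}$ and disjoint from $Y\times\{L\}$). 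The barrier property of $h$ keeps the minimizer in the interior, and in the dimensional range $n\leq 7$ the standard regularity theory yields a smooth closed hypersurface $\Sigma=\p\Omega_{\min}$. The $\mu$-bubble stability inequality, after a conformal adjustment in the normal direction, endows $\Sigma$ with a metric of positive scalar curvature.

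The main obstacle, and the heart of the proof, is the \emph{descent step}: deducing from the existence of psc on $\Sigma$ that $Y$ itself carries psc, contradicting the hypothesis. Homology alone is insufficient; one must exploit that $Y_0$ is a deformation retract of $M$, so $\pi_1(Y_0)\xrightarrow{\cong}\pi_1(M)$. Since $\Sigma$ separates the two ends of $N_L$, it represents the class $[Y_0]\in H_{n-1}(M)$, and the composition $\pi_1(\Sigma)\to\pi_1(M)\xrightarrow{\cong}\pi_1(Y)$ is available as a classifying map. In the spin case (i.e.\ $\widetilde{Y}$ spin, hence $\widetilde{M}$ spin and $\widetilde{\Sigma}$ spin), one runs a Rosenberg higher-index obstruction on $\Sigma$ that transports along this classifying map to $Y$. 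In the non-spin case, one iterates Schoen--Yau hypersurface descent, and here the exclusion $\dim Y\neq 4$ becomes relevant, since this is the dimension where the hypersurface obstruction to psc is most delicate. The automatic spin/non-spin dichotomy of the universal covers in the product setting is precisely what guarantees the correct index-theoretic or geometric obstruction exists on $\Sigma$; without it, the non-example referenced in the abstract would already block the conclusion. This produces the desired contradiction and proves the conjecture for $\dim Y\leq 6$, with the higher-dimensional case remaining open within the scope of these techniques.
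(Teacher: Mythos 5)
Your reduction to the paper's incompressible-hypersurface theorem via \(Y_0 = Y\times\{0\}\) is correct and matches the paper's route, and your observation that the spin/non-spin dichotomy of universal covers is automatic in the product setting is exactly the point of \cref{cor:rosenberg-stolz-codim1}. However, there are two genuine gaps in the way you sketch the proof of the ambient theorem. First, your choice of a single potential \(h\) on \(N_L = Y\times[-L,L]\) with \(h\to\mp\infty\) as \(t\to\pm L\) subject to \(2|h'|+h^2<\Sc_g\) quietly requires \(\inf_M\Sc_g>0\); a complete psc metric may have scalar curvature decaying to zero at infinity, in which case the available constant \(\kappa_L=\min_{N_L}\Sc_g\) may decay fast enough that no admissible \(h\) exists on any \([-L,L]\). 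This is precisely why the paper works with a \emph{partitioned} band (\cref{thm:general,thm:analysis}): first upgrade nonnegative scalar curvature to psc-or-Ricci-flat via Kazdan, then isolate one compact slab where \(\Sc\geq\kappa n(n-1)>0\) and let the flanking slabs contribute only via \(\Sc\geq 0\); the middle slab then forces a bound on the width of the flanking ones, and the contradiction is obtained by taking those arbitrarily long (together with the splitting theorem for the rigidity statement).

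Second, and more seriously, your descent step does not prove the full statement. In the spin case, invoking the Rosenberg index and cobordism invariance only gives a contradiction when \(Y\) has nonvanishing Rosenberg index, which is a strictly stronger hypothesis than \enquote{\(Y\) does not admit psc}: there are closed spin manifolds with vanishing index invariants that nevertheless admit no psc metric (\cite{Schick:Counterexample}), and the paper explicitly highlights that its theorem goes beyond index theory for this reason. Similarly, in the totally non-spin case, iterating Schoen--Yau descent on \(\Sigma\) itself is not what is needed, and the exclusion \(\dim Y\neq 4\) has nothing to do with subtleties of hypersurface descent -- it is forced by Seiberg--Witten counterexamples (\cref{rem:counterexamples}). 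The correct descent mechanism, which handles both cases uniformly, is psc bordism/surgery with respect to the tangential \(2\)-type: the \(\mu\)-bubble \(\Sigma\) is properly separating and hence \(\theta\)-cobordant to \(Y\) (\cref{lem:properly_separating_cobordism_class}), the inclusion \(Y\to B_M\) is \(2\)-connected by construction (\cref{prop:tangential_structure}), and so the Ebert--Frenck surgery theorem transfers psc from \(\Sigma\) to \(Y\) itself (\cref{lem:tangential_structure}). This is where the spin/non-spin dichotomy is actually used -- to control \(\pi_2\) of the \(2\)-type -- not to select between an index-theoretic and a minimal-surface obstruction.
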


\begin{conj}\label{conj:main2}
Let $Y$ be a closed manifold of dimension $(n-2) \neq4$ which does not admit a metric of positive scalar curvature.
Then $Y\times \R^2$ does not admit a complete metric with uniformly positive scalar curvature.
\end{conj}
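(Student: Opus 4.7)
The plan is to reduce Conjecture~\ref{conj:main2}, in the dimensional range accessible to the techniques of this paper, to the codimension-one theorem stated in the abstract. I argue by contradiction: suppose $N := Y \times \R^{2}$ carries a complete metric $g$ with $\mathrm{scal}_{g} \geq \kappa > 0$. My goal is to produce a smooth closed hypersurface $\Sigma_{0} \subset N$ carrying a metric of positive scalar curvature and into which $Y$ embeds as a two-sided incompressible hypersurface with matching spin/non-spin dichotomy on universal covers; the codimension-one theorem would then force $\Sigma_{0}$ to admit no complete psc metric, a contradiction.

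First I would extract $\Sigma$ as a Gromov $\mu$-bubble. In the compact annular region $\Omega := Y \times (D_{R_{2}} \setminus D_{R_{1}}) \subset N$, where $D_{R}$ denotes a disc of radius $R$ in $\R^{2}$, I choose a smooth prescribed mean curvature function $h$ depending only on the $\R^{2}$-coordinate, with the correct signs on the two boundary components of $\Omega$ to force existence of a stable minimizer $\Sigma$ separating them. Using $\mathrm{scal}_{g} \geq \kappa$ and choosing $R_{2}-R_{1}$ large, I arrange the pointwise bound $\mathrm{scal}_{g} + h^{2} - 2|\nabla h|_{g} > 0$ while keeping $|\nabla h|$ small; standard $\mu$-bubble theory in dimensions $\leq 7$ then yields a smooth $\Sigma$ together with, after Gromov's conformal modification, a metric $g_{\Sigma}$ on $\Sigma$ of positive scalar curvature.

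Next I would analyze the topology of $\Sigma$. A component $\Sigma_{0}$ of $\Sigma$ separating $Y \times \{0\}$ from infinity is homologous rel boundary to a slice $Y \times S^{1}_{R}$, so a small pushoff of $Y \times \{\mathrm{pt}\}$ inside $N$ intersects $\Sigma_{0}$ transversally in an embedded copy of $Y$, two-sided in $\Sigma_{0}$ because its normal line bundle is inherited from the trivial normal bundle of $Y$ in $N$. Granting that the induced map $\pi_{1}(\Sigma_{0}) \to \pi_{1}(N) = \pi_{1}(Y)$ is injective, one extracts both the required incompressibility of $Y$ in $\Sigma_{0}$ and the coincidence of the spin/non-spin types of $\widetilde{Y}$ and $\widetilde{\Sigma}_{0}$ (since $\widetilde{\Sigma}_{0}$ then embeds two-sidedly in $\widetilde{N} = \widetilde{Y} \times \R^{2}$). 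As $\dim N \leq 7$ forces $\dim Y \leq 5$, the novel case is $\dim Y = 5$ whence $\dim \Sigma_{0} = 6 \in \{6,7\}$ and the codimension-one theorem applies; the cases $\dim Y \leq 3$ of the conjecture are classical.

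I expect the principal obstacle to be the topological claim just granted: that some component $\Sigma_{0}$ of the $\mu$-bubble sends $\pi_{1}$ injectively into $\pi_{1}(N)$ and contains $Y$ incompressibly. A stable $\mu$-bubble in the class of $Y \times S^{1}_{R}$ need not split as a smooth product, and one has either to run the argument separately on each connected component of $\Sigma$, ruling out homologically inessential components via a careful choice of $h$, or to reformulate the codimension-one theorem with a suitably relaxed notion of incompressibility phrased in terms of maps to classifying spaces. A secondary technical issue is carrying out the $\mu$-bubble construction in a merely complete, non-cocompact metric; here the uniformity of the scalar curvature lower bound, which distinguishes Conjecture~\ref{conj:main2} from Conjecture~\ref{conj:main1}, is essential for the existence of $h$.
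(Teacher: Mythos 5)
Your plan identifies the right obstacle but underestimates it, and the particular step you hope to carry out does not even survive a dimension count: in $N = Y \times \mathbb{R}^2$ with $m := \dim Y$, a pushoff of $Y \times \{\mathrm{pt}\}$ is $m$-dimensional while $\Sigma_0$ is $(m+1)$-dimensional, so inside the $(m+2)$-manifold $N$ a transversal intersection has dimension $m-1$, not $m$. Taking instead $Y \times L$ for a line $L \subset \mathbb{R}^2$ yields something $m$-dimensional, but it is merely an $m$-manifold with a nonzero-degree map to $Y$, not $Y$ itself, and nothing in the stable $\mu$-bubble variational problem controls $\pi_1\Sigma_0 \to \pi_1 N$ or makes any such submanifold incompressible in $\Sigma_0$. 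The spin-matching claim is also only one-sided: a two-sided hypersurface of an almost-spin manifold is almost spin, but a two-sided hypersurface of a totally non-spin manifold need not be totally non-spin, so $\widetilde{\Sigma}_0 \hookrightarrow \widetilde{Y}\times\mathbb{R}^2$ does not force the two universal covers to have the same spin type when $\widetilde{Y}$ is non-spin.

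The paper does not apply \cref{thm:main_theorem} to the $\mu$-bubble itself. Instead it applies it to the closed manifold $Y \times \Sphere^1$ (with $Y$ as incompressible hypersurface) to conclude $Y \times \Sphere^1$ admits no psc metric, and then rebuilds the codimension-one setup on the complement of a tubular neighborhood of $Y$: deleting $\mathcal{U} \cong Y \times \Ball^2$ from the covering $X$ of $M$ with $\pi_1 X = \pi_1 Y$, the Gromov--Lawson extension of the circle projection from $\partial W \cong Y \times \Sphere^1$ over $W = X \setminus \mathcal{U}$ (\cref{lem:extension_to_circle}) furnishes a tangential structure on the double of $W$ which makes it an open band satisfying \cref{pB} (\cref{prop:codim2_pB}). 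The decisive point is that every properly separating hypersurface in that double is $\theta$-cobordant to $Y \times \Sphere^1$, so the Ebert--Frenck psc surgery principle rules out psc on all of them; the topological constraint a $\mu$-bubble can actually be made to respect is a cobordism class, not a $\pi_1$-injection, which is exactly what your plan is missing. The contradiction then comes from \cref{prop:no_upsc_outside_compact}, not from applying the codimension-one theorem to the bubble. Your observations about uniform scalar curvature being essential on the non-compact end and about $\dim Y = 5$ being the genuinely new case are correct; for $\dim Y \leq 3$ the paper uses the aspherical/$\NPSC$ route (\cref{thm:npsc_theorem_codim2}).
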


Recently Gromov phrased a related conjecture \cite[11.12, Conjecture C]{Gro18} which is a cornerstone of his program regarding the study of metric inequalities with scalar curvature:

\begin{conj}\label{conj:bands}
Let $Y$ be a closed manifold of dimension $(n-1)\neq4$ which does not admit a metric of positive scalar curvature and $X=Y\times[-1,1]$.
If $g$ is a Riemannian metric on $X$ with $\Sc(X,g)\geq n(n-1)$, then
\[\wid(X,g):=\dist_g(Y\times\{-1\},Y\times\{1\})\leq\frac{2\pi}{n}.\]
\end{conj}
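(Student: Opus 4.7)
The plan is to follow Gromov's $\mu$-bubble strategy. Assume for contradiction that $g$ is a metric on $X = Y\times[-1,1]$ with $\Sc(X,g) \geq n(n-1)$ but $\wid(X,g) = L > 2\pi/n$. First I would fix a smooth signed-distance function $\rho\colon X \to [-L/2, L/2]$ with $\rho^{-1}(\pm L/2) = Y\times\{\pm 1\}$ and $|\nabla\rho|\leq 1$. Because $L > 2\pi/n$, the preimage $\rho^{-1}\bigl((-\pi/n,\pi/n)\bigr)$ sits compactly in the interior of $X$, and on this slab I would choose a smooth warping function $h = h(\rho)$ modeled on the Riccati prototype $h(\rho) = -(n-1)\tan(n\rho/2)$, so that $h\to\pm\infty$ as $\rho\to\mp\pi/n$ and a differential inequality of the form
\[
2|h'(\rho)| \;\leq\; \tfrac{n}{n-1}\,h(\rho)^2 + n(n-1)
\]
holds, with a small but strict slack coming from $L > 2\pi/n$.

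Next, I would minimize the $\mu$-bubble functional
\[
\mathcal{A}(\Omega) \;:=\; \Ha^{n-1}\!\bigl(\p^*\Omega \cap \mathrm{int}(X)\bigr) - \int_\Omega h\, d\Ha^n
\]
over Caccioppoli sets $\Omega$ containing a neighborhood of $\{\rho=-\pi/n\}$ and disjoint from one of $\{\rho=+\pi/n\}$. The blow-up of $h$ at the endpoints acts as a barrier keeping $\p^*\Omega$ in the interior, so a minimizer $\Omega^*$ exists. Since $\dim X = n \leq 7$, the reduced boundary $\Sigma := \p^*\Omega^*$ is a smooth closed hypersurface whose mean curvature equals $h|_\Sigma$. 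The second variation of $\mathcal{A}$ at $\Omega^*$, combined with the Gauss equation, the bound $\Sc_X \geq n(n-1)$, and the differential inequality on $h$, implies that the first eigenvalue of the conformal Laplacian on $\Sigma$ is strictly positive. Using its first eigenfunction as a conformal factor (for $\dim\Sigma = n-1 \geq 3$) then yields a genuine Riemannian metric of positive scalar curvature on $\Sigma$.

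By construction $\Sigma$ separates the two boundary components of $X$, so at least one component $\Sigma_0$ represents the relative fundamental class of $Y\times\{0\}$ in $H_{n-1}(X,\p X;\mathbb{Z})$. To finish I would need to conclude that $Y$ itself admits a metric of psc, contradicting the hypothesis, and this is the genuine hard part. The $\mu$-bubble $\Sigma_0$ is a priori a different manifold, only cobordant to $Y$, and psc is not preserved under arbitrary cobordism, so the existence of psc on $\Sigma_0$ does not on its own force psc on $Y$. Bridging this gap is the main obstacle and requires additional topological input on $Y$ (typically an obstruction to psc, such as a nonvanishing Rosenberg or higher index on the universal cover, that can be pulled back along the inclusion $\Sigma_0 \hookrightarrow X$ and descended to $Y$); this is precisely where the spin/non-spin compatibility hypothesis used in the paper's main theorem enters. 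Without such extra assumptions on $Y$, the conjecture remains accessible only in restricted classes of manifolds, such as those treated in this paper.
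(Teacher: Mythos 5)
Your $\mu$-bubble setup — the potential $h(\rho) = -(n-1)\tan(n\rho/2)$ satisfying the Riccati identity with equality, strict slack then supplied by a strictly $1$-Lipschitz band map (cf.\ \cref{lem:widthmap}), the blow-up of $h$ acting as a barrier, regularity for $n\leq 7$, and the second-variation plus Schoen--Yau conformal argument producing psc on $\Sigma$ — is correct and is the route the paper alludes to (Gromov's idea, carried out by Räde for orientable $Y$ and $n\leq 7$; see the remark preceding \cref{thm:general}). You also correctly identify the crux: $\Sigma$ has psc but is only cobordant to $Y$, and psc is not a bordism invariant.

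Where the proposal goes wrong is in how it aims to close that gap. Pulling back a Rosenberg/higher index from $Y$ along $\Sigma \hookrightarrow X$ would require $Y$ to carry a nonvanishing index obstruction, which is strictly stronger than $Y$ merely not admitting psc — the paper explicitly notes (citing Schick's counterexample) that its method beats index theory precisely because some manifolds without psc have no known index obstruction. The mechanism that actually closes the gap is \cref{pB}, verified by cobordism plus the psc surgery theorem, as in \cref{lem:properly_separating_cobordism_class,lem:tangential_structure}: $\Sigma$ and $Y\times\{0\}$ are $\theta$-cobordant for the tangential structure on $X$ pulled back from the tangential $2$-type $B_Y$, and since $Y\to B_Y$ is $2$-connected by construction, the Gromov--Lawson/Ebert--Frenck surgery theorem promotes psc on $\Sigma$ to psc on $Y$, giving the contradiction. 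No Dirac operator is involved, and no spin/non-spin compatibility hypothesis is needed here beyond orientability: since $X = Y\times[-1,1]$ deformation retracts onto $Y$, their tangential $2$-types coincide and the dichotomy you invoke is automatic. Finally, be aware the statement you attempted to prove is labeled a \emph{conjecture} in the paper — it is cited from Gromov and is not proved there in general; the orientable $n\leq 7$ case is attributed to Räde via exactly the route above, while the general (non-orientable or $n>7$) case remains open.
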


\begin{rem}\label{rem:counterexamples}
The condition \(\dim(Y) \neq 4\) (which we have added in our formulation of \cref{conj:main1,conj:main2}) is necessary since in this case there are well known counterexamples using Seiberg--Witten obstructions to positive scalar curvature. 
It is possible to show that there exists a closed simply connected 4-manifold $Y$ which does not admit a metric of positive scalar curvature while $Y\times \Sphere^1$ does (see \cite[Counterexample 4.16]{RosenbergStolz:PSCSurgeryConnections}). But then $Y\times \R$ (and consequently $Y\times\R^2$) admits a complete metric with uniformly positive scalar curvature which would violate \cref{conj:main1,conj:main2,conj:bands}.
\end{rem}

As is common in the study of scalar curvature, there are two broad families of methods to approach these conjectures: One is based on the spinor Dirac operator on spin manifolds originating in the work of Lichnerowicz~\cite{Lichnerwociz:Spineurs}.
Indeed, using different variants of index theory on non-compact manifolds, it is now well established that \cref{conj:main1,conj:main2} can be proved whenever \(Y\) admits an index-theoretic obstruction to positive scalar curvature such as the Rosenberg index~\cite{Rosenberg:PSCNovikovI}, see~\cite{Cecchini:CalliasTypePSC,HankePapeSchick:CodimensionTwoIndex,Zeidler:IndexObstructionPositive}.
The other family of methods is based on geometric measure theory and originates in the minimal hypersurface method of \citeauthor{SchoenYau:HypersurfaceMethod}~\cite{SchoenYau:HypersurfaceMethod}.
The first established cases of \cref{conj:bands} by Gromov~\cite[Section~2]{Gromov:MetricInequalitiesScalar} used the classical minimal hypersurface method.
Subsequently also a Dirac operator approach to \cref{conj:bands} was developed by \citeauthor{Cecchini-Zeidler:ScalarMean}~\cite{Ce20,Zeidler:band-width-estimates,ZeidlerWidthLargeness,Cecchini-Zeidler:ScalarMean}.

In \cite[Sections 3.6, 5]{gromovFourLecturesScalar2019v6}, Gromov proposes a different approach towards \cref{conj:bands} using a modified version of the minimal hypersurface method involving so called $\mu$-bubbles.
Following this idea, \citeauthor{Rae21}~\cite{Rae21} proved \cref{conj:bands} and generalizations thereof in case that $Y$ is orientable and $n\leq 7$.
\Cref{conj:main1,conj:main2} have not been directly approached via minimal hypersurface techniques so far, in particular due to the non-compactness inherent to the problem.
However, in recent work of various authors (e.g.~\cite{lesourd2020positive,Lesourd-Unger-Yau:Positive_mass_ends,Zhu:RigdityComplete,chodosh_li}), \(\mu\)-bubbles have turned out to be a useful tool to deal with non-compact situations.
Even though there is no direct formal implication between \cref{conj:main1} and \cref{conj:bands}, it was observed in \cite{ZeidlerWidthLargeness} that the Dirac operator methods used in \cites{Ce20, Zeidler:band-width-estimates} to attack Conjecture \ref{conj:bands} can be refined to prove a more general statement (compare~\cite[Conjecture~7.1]{ZeidlerWidthLargeness}) which imply both \cref{conj:main1,conj:bands} for closed spin manifolds with non-vanishing Rosenberg index.

The main objective of this article is to combine the ideas from \cite{ZeidlerWidthLargeness} with \(\mu\)-bubble methods, in particular from \cite{Rae21}, to prove a generalization of \cref{conj:main1} in case $Y$ is orientable and of dimension $\leq 6$.
By a reduction argument to a codimension one situation (compare~\cites[Theorem~7.5]{GromovLawson:PSCDiracComplete}{HankePapeSchick:CodimensionTwoIndex}), we also establish \cref{conj:main2} in case $Y$ is orientable and of dimension $\leq5$.
We note that ideas related to \cref{conj:main1} have also appeared recently in the work of \citeauthor{CPSZ21}~\cite{CPSZ21} in connection with the positive mass theorem.

More generally than \cref{conj:main1}, one may ask under which circumstances the existence of a hypersurface \(Y \subset M\) which does not admit psc already is an obstruction to the existence of a complete psc metric on the ambient manifold \(M\).
This question has been discussed by Gromov in \cite[\S11.6]{Gromov:MetricInequalitiesScalar}, where in particular he asked if it would be enough to assume that \(Y\) is a two-sided \emph{incompressible hypersurface}, that is, the map \(\pi_1 Y \to \pi_1 M\) induced by the inclusion is injective.
In the case that the ambient manifold is spin of dimension \(n \in \{6,7\}\) and under further geometric conditions, a proof confirming this was sketched in \cite[708\psqq]{Gromov:MetricInequalitiesScalar} and it was asked if the spin hypothesis can be dropped.
We answer this question in our first theorem below together with the following \cref{ex:counterexample}.
Here we say that a connected manifold \(M\) is \emph{almost spin}, if its universal covering \(\tilde{M}\) is spin, and we say it is \emph{totally non-spin} if \(\tilde{M}\) is non-spin.
Since spin structures lift to coverings, being almost spin is equivalent to the existence of some  covering which is spin.
Unless explicitly stated otherwise, we consider manifolds without boundary.
\begin{thm}\label{thm:main_theorem}
  Let \(M\) be an orientable connected \(n\)-dimensional manifold with \(n\in\{6,7\}\) and let \(Y \subset M\) be a two-sided closed connected incompressible hypersurface which does not admit a metric of positive scalar curvature.
  Suppose that one of the following two conditions holds:
  \begin{myenuma}
    \item \(M\) is almost spin. \label{item:spin}
    \item \(Y\) is totally non-spin. \label{item:non_spin}
  \end{myenuma}
  Then \(M\) does not admit a complete metric of positive scalar curvature.
  More precisely, if \(g\) is a complete metric of non-negative scalar curvature on \(M\), then \((M,g)\) admits a connected Riemannian covering isometric to \((N \times \R, g_N + \D{x}^2)\), where \((N,g_N)\) is a closed Ricci flat manifold.
\end{thm}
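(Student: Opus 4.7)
The plan is to argue by contradiction under the stronger assumption $\Sc_g > 0$: use the incompressibility and two-sidedness of $Y$ to reduce to a suitable covering $\widehat{M}$ in which a lift of $Y$ separates two ends, then apply the $\mu$-bubble machinery of \cite{Rae21} to produce a closed hypersurface $\Sigma \subset \widehat{M}$ admitting a psc metric, and finally transport this psc metric back to $Y$ using the spin/non-spin hypothesis, contradicting the assumption on $Y$. The rigidity statement under $\Sc_g \geq 0$ will follow from analyzing the equality case in the stability inequality.

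First, after possibly passing to a double cover to make $Y$ separating, I take the connected covering $\widehat{M}\to M$ corresponding to $\pi_1(Y)\subset\pi_1(M)$: since $Y$ is two-sided and incompressible, a lift $\widehat{Y}\cong Y$ is embedded and separating in $\widehat{M}$, with both complementary regions non-compact. Under hypothesis (a) I refine this cover so that $\widehat{M}$ is spin; under (b) the lift $\widehat{Y}$ remains totally non-spin. Pulling back $g$ gives a complete metric $\widehat{g}$ with $\Sc > 0$. Then I choose a smooth warping function $h\colon\widehat{M}\to\R$ tending to $\pm\infty$ on the two ends and minimize the functional $\mathcal{A}(\Omega) = \Ha^{n-1}(\p^{*}\Omega)-\int_\Omega h\,\mathrm{d}\Ha^n$ over Caccioppoli sets coinciding with a fixed half of $\widehat{M}$ outside a compact set. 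As in \cite{chodosh_li}, the divergence of $h$ combined with $\Sc_{\widehat{g}}>0$ confines the minimizer to a compact region; since the ambient dimension is at most seven, regularity yields a smooth closed minimizer $\Sigma$ in the homology class of $\widehat{Y}$. The stability inequality together with a Schoen--Yau conformal change equip $\Sigma$ with a metric of positive scalar curvature.

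The main obstacle is the topological step: deducing from psc on $\Sigma$ that $Y$ itself admits psc. Since $\Sigma$ and $\widehat{Y}$ are homologous in $\widehat{M}$, they cobound a compact region $W\subset\widehat{M}$, and composition with $\widehat{M}\to M$ produces a bordism between $Y$ and a perturbation of $\Sigma$ in $M$. Under (a), the ambient spin structure restricts to a spin cobordism, so $\Sigma$ is spin-cobordant to $\widehat{Y}$; under (b), the nontriviality of $w_2$ on the universal cover of $Y$ transfers through $W$ to $\Sigma$. In either case, iterating the $\mu$-bubble construction inside $\Sigma$ to successively reduce the dimension (in the spirit of \cite{ZeidlerWidthLargeness}) and ultimately invoking Gromov--Lawson surgery (or a direct psc-transport argument in low dimension) produces the desired psc metric on $Y$, contradicting the hypothesis. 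The example announced in the paper shows that the spin/non-spin dichotomy is genuinely needed for this transfer step to work.

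For the rigidity statement with only $\Sc_g\geq 0$: either a small conformal perturbation restores strict positivity somewhere and the contradiction above goes through, or the stability inequality for the $\mu$-bubble is saturated. In the latter case, equality forces $\Sigma$ to be totally geodesic and Ricci-flat with the unit normal extending as a parallel vector field on a tubular neighborhood. Standard unique continuation arguments and de Rham-type splitting then propagate this local product structure globally, so that a suitable cover of $(M,g)$ is isometric to $(N\times\R, g_N+\mathrm{d}x^2)$ for some closed Ricci-flat $N$, as asserted.
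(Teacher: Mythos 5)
The broad strategy is the right one—pass to the covering $\hat M$ with $\pi_1 \hat M = \pi_1 Y$ so that (a lift of) $Y$ becomes a separating hypersurface, use $\mu$-bubbles to extract a closed separating hypersurface $\Sigma$ carrying psc, and then derive a contradiction from the hypothesis that $Y$ has no psc metric. But the crucial step, the one you yourself flag as ``the main obstacle,'' is not handled correctly, and this is where the spin/non-spin dichotomy actually does its work.

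The correct mechanism for transporting psc from $\Sigma$ to $Y$ is the psc bordism/surgery theorem (Gromov--Lawson, Schoen--Yau, and in the form used here Ebert--Frenck): if $N$ is an $(n-1)$-manifold with a \(2\)-connected map to a space $B$ and $N$ is $\theta$-cobordant to a psc manifold through $B$, then $N$ admits psc. In the paper this is applied with $B$ the tangential \(2\)-type of $\hat M$; hypothesis (a) forces $\pi_2 B = 0$ so that mere incompressibility of $Y$ gives \(2\)-connectedness of $Y \to B$, while hypothesis (b) forces $\pi_2 B = \Z/2$, and the totally-non-spin assumption on $Y$ gives the needed surjection $\pi_2 Y \twoheadrightarrow \pi_2 B$. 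Every properly separating hypersurface in $\hat M$ is automatically $\theta$-cobordant to $Y$ by the compact band between them, so $\hat M$ has Property~B, i.e.\ no separating hypersurface admits psc; one applies this directly, with no need to ``iterate the $\mu$-bubble construction inside $\Sigma$ to reduce dimension.'' That phrase mixes up two unrelated ideas: the Schoen--Yau descent (which would require inheriting a useful homology class in $\Sigma$, which you do not have) and the Dirac-theoretic techniques of the cited width-largeness paper (which are not being used here at all). Closely related: your claim that, under (a), you can ``refine this cover so that $\hat M$ is spin'' is not available in general—you need $\pi_1\hat M = \pi_1 Y$ to make $Y$ separating, and that particular cover need not carry a spin structure even if $M$ is almost spin; the tangential \(2\)-type formulation circumvents exactly this issue.

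Two secondary points. First, your analytic step needs more care than ``choose $h\to\pm\infty$ on the two ends as in Chodosh--Li'': with only $\Sc>0$ (possibly decaying at infinity) the barrier and confinement conditions for the $\mu$-bubble are delicate. The paper avoids this entirely by first invoking Kazdan's theorem—nonnegative scalar curvature and not Ricci-flat on a complete manifold implies existence of a complete metric of \emph{strictly} positive scalar curvature—and then running a partitioned-band comparison on compact segments: a slab of uniform psc flanked by two compact bands of arbitrarily large width yields a contradiction. Second, for the rigidity case your route (``stability inequality saturated $\Rightarrow$ totally geodesic Ricci-flat $\Sigma$ with parallel normal $\Rightarrow$ unique continuation and de Rham splitting'') is sketched but not established; the paper instead notes that once $(M,g)$ is forced to be Ricci-flat, the presence of two or more ends yields a line, and the Cheeger--Gromoll splitting theorem gives the global isometric product $(N\times\R, g_N+\mathrm{d}x^2)$ with $N$ compact. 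That is both cleaner and actually complete.
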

Note that if \(M\) is almost spin, then any two-sided hypersurface \(Y \subset M\) is almost spin itself.
Conversely, if a two-sided hypersurface is totally non-spin, then so is the ambient manifold.
Thus the alternative hypotheses \labelcref{item:spin,item:non_spin} of \cref{thm:main_theorem} simply say that either \(M\) and \(Y\) are both almost spin or neither is.
The following example shows that this restriction cannot be dropped.

\begin{ex}\label{ex:counterexample}
  Fix \(n \geq 6\).
  Let \(L\) be the K3 surface, that is, a \(4\)-dimensional simply-connected spin manifold such that \(\Ahat(L) \neq 0\).
  Consider the closed manifold \(M \coloneqq (L \times \Torus^{n-4}) \# (\CP^2 \times \Sphere^{n-4})\).
  Then, since \(L\) is oriented cobordant to a psc manifold by \cite[\S 3]{GromovLawson:Classification}, the cobordism class represented by \(M\) in \(\Omega^{\SO}_{n}(\Bfree \Z^{n-4})\) has a psc representative.
  Thus the totally non-spin manifold \(M\) itself admits a psc metric, e.g.\ by~\cite[Theorem~2.13]{Rosenberg:PSCNovikovII} or \cite[Theorem~1.5]{Ebert-Frenck:GLC_surgery}.\footnote{Alternatively, the existence of a psc metric on \(M\) can also be deduced from \cite[Theorem~5.6]{Miyazaki:PSCExistence}.}
  It contains \(Y = L \times \Torus^{n-5}\) as an incompressible hypersurface which does not admit psc by~\cite[Corollary~5.22]{GromovLawson:PSCDiracComplete}.
  By passing to the covering \(\hat{M} \to M\) with \(\pi_1 \hat{M} = \Z^{n-5}\) corresponding to \(\pi_1 Y\), we even find an example of a complete manifold which admits a complete uniform psc metric although it contains a closed incompressible \emph{separating} hypersurface which does not.
\end{ex}
This shows that the almost spin condition is fundamentally relevant for this kind of problem even though the proof \cref{thm:main_theorem} does not use the Dirac operator at all.
Moreover, even in the spin case, \cref{thm:main_theorem} is stronger (in the dimension range where it applies) than what can be proved using index-theoretic techniques because there are examples of closed spin manifolds which do not admit psc even though all known index invariants vanish~\cite{Schick:Counterexample}.

The upper dimension bound \(n \leq 7\) comes from the usual problem pertaining to singularities of minimal hypersurfaces and \(\mu\)-bubbles. 
Thus, if this issue was resolved, the upper dimensional bound in \cref{thm:main_theorem} (and in all other results of this paper) should conjecturally be removable.
The case \(n = 8\) likely is more easily accessible via an adaptation of the work of N.~Smale~\cite{Smale1993Regular8Dim}.

\begin{rem}
In~\cite[\S11.6]{Gromov:MetricInequalitiesScalar}, Gromov conjectured that in general, given a complete Riemannian manifold $M$, the existence of a two-sided incompressible closed embedded hypersurface which does not carry a psc metric obstructs the existence of a function $h$ on $M$ such that
\[
    \frac{n}{n-1}h^2-2\left|\D h\right|+\scal_g\geq0.
\]
This condition is motivated by the potential function used to construct $\mu$-bubbles.
From this point of view, ~\cref{thm:main_theorem} gives a complete description of the case when $h=0$, which includes many geometrically interesting cases.
We point out that our method can be adapted to treat cases with non-trivial $h$.
In the direction of Gromov's motivating example~\cite[708\psqq]{Gromov:MetricInequalitiesScalar}, compare \cref{thm:analysis1}.
We also point out that, compared to the method sketched in~\cite[\S11.6]{Gromov:MetricInequalitiesScalar}, our technique does not require any extra assumption on the geometry of the manifold.
\end{rem}

For \(n = 5\), \cref{thm:main_theorem} fails as mentioned in \cref{rem:counterexamples}.
On the other hand, for \(2 \leq n \leq 4\), it holds even independently of the hypotheses \labelcref{item:spin,item:non_spin}, but for a different reason than in high dimensions.
To formulate this, we consider the following notion:
\begin{defi}[{compare~\cites[Definition~2.20]{raede2021scalar}[\enquote{\(\mathcal{C}_{\mathrm{deg}}\)}]{CPSZ21}}]\label{defi:NPSC}
  A closed connected oriented manifold \(Y\)  is called \(\NPSC\) if it satisfies the following property: No closed oriented manifold \(Z\) which admits a continuous map of non-zero degree \(Z \to Y\) admits a metric of positive scalar curvature.
\end{defi}
For instance, it has recently been shown~\cites{chodosh_li,chodosh2021classifying, Gromov:5d} that closed oriented aspherical manifolds of dimension $\leq5$ are \(\NPSC\).
Moreover, a closed oriented manifold of dimension \(\leq 3\) which does not admit psc necessarily admits a non-zero degree map to an aspherical \(\NPSC\) manifold.\footnote{In dimension \(2\) this is a consequence of the classification of surfaces and the Gauß--Bonnet theorem, whereas in dimension \(3\) it is a consequence of the classification of \(3\)-manifolds which admit psc following from Perelman's work, see e.g.\ the discussion in~\cite{Marques2012:DeformingThreeManifolds}.}
Thus the low-dimensional counterpart to \cref{thm:main_theorem} is contained in the following theorem which already appeared recently in \cite{CPSZ21}.
\begin{thm}[{compare~\cite[Theorem~1.1]{CPSZ21}}]\label{thm:npsc_theorem}
  Let \(M\) be an orientable connected \(n\)-dimensional manifold with \(n \leq 7\) and let \(\iota \colon Y \hookrightarrow M\) be a two-sided closed hypersurface that admits a map of non-zero degree \(\phi \colon Y \to Y_0\) to an aspherical \(\NPSC\) manifold \(Y_0\) and such that \(\ker(\pi_1 Y \xrightarrow{\iota_\ast} \pi_1 M) \subseteq \ker(\pi_1 Y \xrightarrow{\phi_\ast} \pi_1 Y_0)\).
  Then \(M\) does not admit a complete metric of positive scalar curvature.
  More precisely, if \(g\) is a complete metric of non-negative scalar curvature on \(M\), then \((M,g)\) admits a connected Riemannian covering isometric to \((N \times \R, g_N + \D{x}^2)\), where \((N,g_N)\) is a closed Ricci flat manifold.
\end{thm}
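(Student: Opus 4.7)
The plan is to reduce the problem to a $\mu$-bubble argument in a carefully chosen covering of $M$. First I would use the kernel hypothesis to construct a connected covering $\pi\colon \hat{M} \to M$ together with a lift $\hat{Y} \subset \hat{M}$ of $Y$ such that $\hat{Y}$ separates $\hat{M}$ and the map $\phi$ extends continuously to a map $\hat{\Phi}\colon \hat{M} \to Y_0$ whose restriction to $\hat{Y}$ is homotopic to $\phi$. The key point is that the condition $\ker(\iota_*) \subseteq \ker(\phi_*)$ produces a factorization $\phi_* = \psi \circ \iota_*$ through some homomorphism $\psi\colon \iota_*(\pi_1 Y) \to \pi_1 Y_0$; taking $\hat{M}$ to be the cover corresponding to the subgroup $\iota_*(\pi_1 Y) \subseteq \pi_1 M$ (composed, if necessary, with a further $\Z/2$-cover needed to make $\hat{Y}$ separating, which is available since $Y$ is two-sided) and invoking the asphericity of $Y_0$ yields the desired $\hat{\Phi}$.

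Next, let $g$ be a complete metric on $M$ with $\scal_g \geq 0$ and pull it back to $\hat{M}$, writing $\hat{M} = \hat{M}_- \cup_{\hat{Y}} \hat{M}_+$. If both sides are compact, then $\hat{M}$ is itself closed, and standard minimal hypersurface theory produces a smooth stable minimal representative $\Sigma$ of $[\hat{Y}] \in H_{n-1}(\hat{M})$. Otherwise, say $\hat{M}_+$ is non-compact; on a slab $\Omega_L = \{0 \leq d(\cdot,\hat{Y}) \leq L\} \cap \hat{M}_+$ I would run the warped $\mu$-bubble construction of Gromov and Räde with a prescribed mean curvature function diverging at the far boundary of $\Omega_L$, producing a smooth $\mu$-bubble $\Sigma = \Sigma_L \subset \Omega_L$ homologous to $\hat{Y}$. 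In either case smoothness of $\Sigma$ uses $n \leq 7$, and the associated stability inequality combined with a conformal change yields a metric of psc on $\Sigma$ whenever $\scal_g$ is strictly positive along $\Sigma$.

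Since $\Sigma$ is homologous to $\hat{Y}$ in $\hat{M}$, pushing forward via $\hat{\Phi}$ gives $\deg(\hat{\Phi}|_\Sigma) = \deg(\phi) \neq 0$, so the $\NPSC$ property of $Y_0$ forbids psc on $\Sigma$; this contradicts the previous paragraph whenever $\scal_g > 0$, establishing the first assertion. For the refined rigidity under merely $\scal_g \geq 0$, in the non-compact case I would let $L \to \infty$: if no contradiction ever arises, then all inequalities in the stability analysis must saturate in the limit, forcing both vanishing scalar curvature and vanishing second fundamental form on the limit slice and, in turn, an isometric product splitting of the end. Applying the argument on both sides of $\hat{Y}$ and patching, one concludes that $\hat{M}$ (possibly after passing to a further connected cover) is isometric to $(N \times \R, g_N + \D{x}^2)$ with $(N,g_N)$ a closed Ricci-flat manifold. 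I expect the main obstacle to be precisely this last rigidity step: carefully controlling the convergence of the family $\{\Sigma_L\}$ as $L\to\infty$ and rigorously extracting the splitting, along the lines of the techniques appearing in~\cite{Rae21,chodosh_li,Zhu:RigdityComplete}.
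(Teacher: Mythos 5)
Your first paragraph (constructing the covering $\hat{M}$ with $\pi_1\hat M=\iota_*(\pi_1Y)$ and extending $\phi$ to $\hat\Phi\colon\hat M\to Y_0$ via asphericity of $Y_0$) is exactly the paper's route through \cref{prop:npsc_implies_pB}, modulo two small inaccuracies.
The extra $\Z/2$-cover is superfluous: once $\pi_1Y\twoheadrightarrow\pi_1\hat M$, two-sidedness of $Y$ already forces the lift $\hat Y$ to separate $\hat M$, since a loop meeting $\hat Y$ transversally in a single point would have non-trivial mod-$2$ intersection with $\hat Y$, contradicting that it is homotopic into $\hat Y$.
And the ``both sides compact'' branch never occurs: a compact component of $\hat M\setminus\hat Y$ would exhibit $\hat Y$ as $\hat\Phi$-nullbordant, so $\deg\phi$ would vanish.
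These two facts put $\hat M$ into the paper's framework of an open band with \cref{pB}.

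The analytic step has a genuine gap.
A one-sided slab $\Omega_L=\{0\le d(\cdot,\hat Y)\le L\}\cap\hat M_+$ with a potential $h$ diverging only at the far boundary does not in general satisfy the $\mu$-bubble barrier condition $\mean(\hat Y,g)>-h|_{\hat Y}$: the scalar-flat warped potential $h$, determined by $\tfrac{n}{n-1}h^2-2|h'|=0$, is strictly decreasing and cannot change sign, so $h|_{\hat Y}$ is pinned to one sign and cannot dominate an arbitrary mean curvature of $\hat Y$.
The paper's partitioned comparison principle (\cref{thm:general} via \cref{thm:analysis}) resolves precisely this: one takes a \emph{two-sided} compact band $X^C$ around a middle slab $V_2$ where $\scal_g\ge\kappa n(n-1)$, lets $h$ diverge at \emph{both} outer boundaries (so the barrier is automatic), and glues the two scalar-flat pieces, whose signs are opposite, through a $\cos^{2/n}$-warped model on $V_2$.
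Without such a partition, the single-slab construction stalls as soon as $\mean(\hat Y)$ is allowed to be negative.

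For the rigidity under merely $\scal_g\ge0$, the limiting-bubble argument you sketch is far more delicate than what the paper actually does, and you already flag it as the main obstacle.
The paper's proof (\cref{thm:analysis2}) replaces it with two black boxes: if $g$ is not Ricci flat, Kazdan's theorem produces a complete conformal metric with strictly positive scalar curvature, which the partitioned band-width estimate then contradicts against \cref{pB}; hence $g$ is Ricci flat, and since an open band is disconnected at infinity it contains a geodesic line, so the Cheeger--Gromoll splitting theorem yields $(\hat M,g)\cong(N\times\R,g_N+dt^2)$ with $N$ compact.
I would strongly recommend adopting this Kazdan plus Cheeger--Gromoll route rather than attempting the $L\to\infty$ limit.
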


In particular, applying \cref{thm:main_theorem,thm:npsc_theorem} to \(M = Y \times \R\), we deduce: 

\begin{cor} \label{cor:rosenberg-stolz-codim1}
  \cref{conj:main1} holds for orientable manifolds in dimensions \( 5 \neq n \leq 7\).
\end{cor}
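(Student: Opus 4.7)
The plan is to apply \cref{thm:main_theorem,thm:npsc_theorem} directly to the ambient manifold $M \coloneqq Y \times \R$ together with its natural two-sided closed connected hypersurface $Y \times \{0\}$, covering the dimension ranges $n \in \{6,7\}$ and $n \in \{2,3,4\}$ respectively.

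First, I would verify the hypotheses that are common to both invocations. Since $Y$ is closed and orientable, so is $M$, and $Y \times \{0\}$ is manifestly a two-sided closed connected hypersurface. The inclusion $\iota \colon Y \hookrightarrow Y \times \R$ is a homotopy equivalence, so $\iota_\ast \colon \pi_1 Y \to \pi_1 M$ is an isomorphism; in particular $Y$ is incompressible and $\ker(\iota_\ast) = \{e\}$. Also, by hypothesis $Y$ does not admit a metric of positive scalar curvature.

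For $n \in \{6,7\}$, I would apply \cref{thm:main_theorem}. To check the spin/non-spin dichotomy, note that the universal cover of $M$ is diffeomorphic to $\tilde Y \times \R$, whose tangent bundle is the pullback of $T\tilde Y$ up to a trivial line bundle. Hence $\tilde M$ is spin if and only if $\tilde Y$ is spin, so either both $M$ and $Y$ are almost spin, or both are totally non-spin, i.e., one of \labelcref{item:spin,item:non_spin} automatically holds. Therefore $M = Y \times \R$ does not admit a complete psc metric.

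For the remaining range $n \in \{2,3,4\}$ I would appeal to \cref{thm:npsc_theorem}, using the observation recalled just before it: any closed oriented manifold of dimension at most $3$ that does not admit psc admits a non-zero degree map $\phi \colon Y \to Y_0$ to an aspherical $\NPSC$ manifold $Y_0$ (via the Gauß--Bonnet theorem and the classification of surfaces in dimension $\leq 2$, and via Perelman's classification combined with the recent result that closed oriented aspherical $3$-manifolds are $\NPSC$). The kernel condition $\ker(\iota_\ast) \subseteq \ker(\phi_\ast)$ is trivial since $\iota_\ast$ is injective. Thus \cref{thm:npsc_theorem} again rules out complete psc metrics on $M$. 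There is no real obstacle to this deduction: all the geometric and analytic work sits inside \cref{thm:main_theorem,thm:npsc_theorem}, so the proof is essentially a matter of matching the product situation to the stated hypotheses and dispatching the low-dimensional case to the $\NPSC$ framework.
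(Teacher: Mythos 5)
Your proposal is correct and matches the paper's approach exactly: the paper derives the corollary by the single sentence ``applying Theorems~\ref{thm:main_theorem} and \ref{thm:npsc_theorem} to $M = Y\times\R$,'' and you have simply spelled out the hypothesis-checking (incompressibility via the homotopy equivalence, the automatic spin/non-spin dichotomy from $T(\tilde Y\times\R)\cong T\tilde Y\oplus\underline{\R}$, and the low-dimensional reduction to the $\NPSC$ framework) that the paper leaves implicit.
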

However, we note that the low-dimensional cases \(n \leq 4\) of \cref{cor:rosenberg-stolz-codim1} were already known because an oriented manifold of dimension \(\leq 3\) which does not admit positive scalar curvature is necessarily spin and has rationally non-vanishing Rosenberg index, and so the situation is within the scope of index-theoretic results such as \cite[Theorem~A]{Cecchini:CalliasTypePSC}.

We now turn to our results corresponding to \cref{conj:main2}:

\begin{thm}\label{thm:main_theorem_codim2}
  Let \(M\) be an orientable connected \(7\)-dimensional manifold and let \(Y \subset M\) be a closed connected \(5\)-dimensional submanifold with trivial normal bundle such that the inclusion induces an injection \(\pi_1 Y \hookrightarrow \pi_1 M\) and a surjection \(\pi_2 Y \twoheadrightarrow \pi_2 M\).
  Suppose that \(Y\) does not admit a metric of positive scalar curvature.
  Then \(M\) does not admit a complete metric of uniformly positive scalar curvature.
\end{thm}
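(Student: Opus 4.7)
The plan is to reduce the codimension-two situation to a codimension-one band and apply the band-width inequality of \cite{Rae21}, adapting to $\mu$-bubbles the classical codimension-two argument of \cite[Theorem~7.5]{GromovLawson:PSCDiracComplete} and \cite{HankePapeSchick:CodimensionTwoIndex}. Suppose, for contradiction, that $M$ admits a complete metric $g$ with $\Sc_g \geq \sigma > 0$. Let $\hat M \to M$ be the connected covering corresponding to $\iota_\ast \pi_1(Y) \subseteq \pi_1(M)$, equipped with the pulled-back complete metric $\hat g$ (still with $\Sc_{\hat g} \geq \sigma$). Then $Y$ lifts to a compact embedded submanifold of $\hat M$, and the inclusion $Y \hookrightarrow \hat M$ becomes a $2$-equivalence thanks to the $\pi_1$-isomorphism and $\pi_2$-surjection.

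Using this $2$-equivalence together with the trivialization of the normal bundle $\nu Y \cong Y \times \R^2$, I would construct, for each length $R > 0$, a \emph{thickened band} $\Psi_R \colon Y \times [-R, R] \times (-\delta, \delta) \to \hat M$ that is a bi-Lipschitz immersion with bounds independent of $R$. Within its image I would apply the warped $\mu$-bubble construction of \cite[\S 3]{Rae21} with boundary $\Psi_R(Y \times \{\pm R\} \times \{0\}) \cong Y \sqcup Y$ kept fixed and with the transverse hypersurfaces $\Psi_R(Y \times [-R, R] \times \{\pm \delta/2\})$ acting as mean-curvature barriers. Since $\dim \hat M = 7$, the minimizer is a smooth embedded compact band $\Sigma_R^6 \subset \hat M$ with $\partial \Sigma_R \cong Y \sqcup Y$; by stability together with the uniform psc, a conformal change would equip $\Sigma_R$ with a metric of uniformly positive scalar curvature bounded below by some $c(\sigma) > 0$.

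The composition $\Sigma_R \hookrightarrow \Psi_R\bigl(Y \times [-R,R] \times (-\delta,\delta)\bigr) \to Y$ (where the second map is the $Y$-projection in the band trivialization) then identifies each boundary component of $\Sigma_R$ with $Y$ and has non-zero degree, so $\Sigma_R$ is an orientable $6$-dimensional over-$Y$ band. Since $Y$ is $5$-dimensional and does not admit a psc metric, the band-width inequality of \cite{Rae21} (the orientable generalization of \cref{conj:bands} to over-$Y$ bands) yields an upper bound $\wid(\Sigma_R) \leq C(\sigma)$ uniform in $R$. On the other hand, the bi-Lipschitz property of $\Psi_R$ forces $\wid(\Sigma_R) \geq R/L$ for some $R$-independent constant $L$. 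Choosing $R$ sufficiently large produces the desired contradiction.

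The main obstacle is the construction of the thickened band $\Psi_R$ with $R$-independent bi-Lipschitz bounds. The topological input is the $2$-equivalence coming from the $\pi_1$- and $\pi_2$-conditions; upgrading this to uniform geometric control on $\hat M$ in the presence only of a complete metric of uniformly positive scalar curvature is the hard part, and is the geometric analogue in the $\mu$-bubble framework of the vanishing of the primary obstruction in the Callias-type argument of \cite{HankePapeSchick:CodimensionTwoIndex}.
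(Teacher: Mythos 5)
Your proposal identifies its own central difficulty honestly, and that difficulty is not a technical inconvenience but a genuine gap: there is no way to produce a thickened band $\Psi_R \colon Y \times [-R,R] \times (-\delta,\delta) \to \hat{M}$ with $R$-independent bi-Lipschitz bounds from the hypotheses at hand. A complete metric of uniformly positive scalar curvature on $\hat{M}$ gives you no control over distortion, injectivity radius, or sectional curvature far from $Y$, so the image of any such attempted immersion can degenerate arbitrarily as $R \to \infty$. The Callias-type argument of Hanke--Pape--Schick does not supply such control either; it avoids it by index-theoretic means, and there is no ``geometric analogue'' of it of the sort you are hoping for. A secondary issue is that your application of the band-width inequality to the ``over-$Y$ band'' $\Sigma_R$ quietly requires more than ``$Y$ does not admit psc'': for a band mapping with non-zero degree to $Y$ to satisfy \cref{pB}, one needs either that $Y$ is $\NPSC$ (false in general for $\dim Y = 5$) or a surgery/cobordism argument that pins down the tangential $2$-type, which your outline omits.

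The paper's proof sidesteps both problems by never constructing any geometric band inside $\hat{M}$. Instead, it works entirely on the topological level: pass to the cover $X$ with $\pi_1 X = \pi_1 Y$, delete the tubular neighborhood $\mathcal{U} \cong Y \times \Ball^2$ to form $W = X \setminus \mathcal{U}$ with $\partial W \cong Y \times \Sphere^1$, and then take the \emph{double} $\double{W}$. \cref{lem:extension_to_circle} produces a map $W \to \Sphere^1$ extending the boundary projection, which (together with the tangential $2$-type of $X$) equips $\double{W}$ with a tangential structure to which \cref{lem:tangential_structure} applies with hypersurface $Y \times \Sphere^1$; the non-psc of $Y \times \Sphere^1$ follows from \cref{thm:main_theorem} since $\dim Y = 5$. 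This shows $\double{W}$ is an open band with \cref{pB}. Now the metric enters only at the very end: lift the uniformly psc metric to $X$, restrict to $W$, modify it in a compact neighborhood of $\partial W$ to close up smoothly on $\double{W}$, and the result has uniformly psc outside a compact set -- contradicting \cref{prop:no_upsc_outside_compact}. No uniform bi-Lipschitz control, no $R$-parametrized family of $\mu$-bubbles, no degree argument on the minimizers. You should replace the thickened-band construction with this deletion-and-doubling step, and route the obstruction through the tangential structure / psc-surgery machinery rather than through $\NPSC$ or Conjecture~\ref{conj:bands} directly.
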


Note that, unlike \cref{thm:main_theorem}, we do not need to impose any conditions involving spin structures.
The intuitive reason for this is that the hypotheses already imply that the induced map \(\tilde{Y} \to \tilde{M}\) between universal covers is \(2\)-connected and so \(M\) is almost spin if and only if \(Y\) is.
On the other hand, the surjectivity condition \(\pi_2 Y \twoheadrightarrow \pi_2 M\) cannot be omitted as the example \(M = Y \times \Sphere^2\) shows.

We also have a codimension \(2\) counterpart to \cref{thm:npsc_theorem} exploiting the \(\NPSC\) property.

\begin{thm}\label{thm:npsc_theorem_codim2}
  Let \(M\) be an orientable connected \(n\)-dimensional manifold with \(n \leq 7\) and let \(Y \subset M\) be a closed connected \((n-2)\)-dimensional submanifold with trivial normal bundle such that the inclusion induces an injection \(\pi_1 Y \hookrightarrow \pi_1 M\) and a surjection \(\pi_2 Y \twoheadrightarrow \pi_2 M\).
  Suppose that \(Y\) admits a map of non-zero degree \(Y \to Y_0\) to an aspherical \(\NPSC\) manifold \(Y_0\).
  Then \(M\) does not admit a complete metric of uniformly positive scalar curvature.
\end{thm}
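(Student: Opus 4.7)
The plan is to argue by contradiction using a codimension two to codimension one reduction via a $\mu$-bubble in a suitable cover, exploiting that $Y_0\times S^1$ inherits the $\NPSC$ property from $Y_0$. Assume $g$ is a complete metric on $M$ with $\scal_g \geq \sigma > 0$. Pass to the connected covering $\bar M \to M$ corresponding to $\iota_\ast \pi_1 Y \cong \pi_1 Y$; the metric lifts to a complete metric $\bar g$ with $\scal_{\bar g} \geq \sigma$, and $Y$ lifts canonically to a closed connected copy $\bar Y \subset \bar M$ of $Y$. Covering maps are isomorphisms on $\pi_k$ for $k \geq 2$, so the hypothesis $\pi_2 Y \twoheadrightarrow \pi_2 M$ yields $\pi_2 \bar Y \twoheadrightarrow \pi_2 \bar M$, while $\pi_1 \bar Y \to \pi_1 \bar M$ is an isomorphism; hence $\bar Y \hookrightarrow \bar M$ is $2$-connected.

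Because $Y_0$ is aspherical, obstruction theory extends $\phi \colon \bar Y \to Y_0$ to a map $\Phi \colon \bar M \to Y_0$: the only potential obstruction, lying in $H^2(\bar M, \bar Y; \pi_1 Y_0)$, vanishes by $2$-connectedness. The triviality of the normal bundle of $\bar Y$ furnishes a map $\theta \colon \bar M \setminus \bar Y \to S^1$ whose restriction to a punctured tubular neighborhood $\bar Y \times (D^2 \setminus \{0\})$ is the angular projection---equivalently, a class in $H^1(\bar M \setminus \bar Y; \Z)$ linking $[\bar Y]$, which exists because the Euler class of the (trivial) normal bundle vanishes. Together these yield $\Phi \times \theta \colon \bar M \setminus \bar Y \to Y_0 \times S^1$ whose restriction to $\bar Y \times S^1 = \partial(\bar Y \times D^2)$ is $\phi \times \mathrm{id}$, of degree $\deg \phi \neq 0$.

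The geometric core is to produce, via a $\mu$-bubble argument in the spirit of~\cite{Rae21,gromovFourLecturesScalar2019v6}, a smooth closed two-sided hypersurface $\Sigma \subset \bar M \setminus \bar Y$ homologous to $\bar Y \times S^1$ in $H_{n-1}(\bar M \setminus \bar Y; \Z)$ and carrying a metric of positive scalar curvature. Fix a small tubular neighborhood $U_r = \bar Y \times D^2(r)$ and exhaust $\bar M$ by compact domains $K_j \supset U_r$. On $K_j \setminus U_r^\circ$, minimize the $\mu$-bubble functional $\Omega \mapsto \mathcal{H}^{n-1}(\partial^\ast \Omega) - \int_\Omega h\, dv$ over Caccioppoli sets $\Omega$ containing $U_r$, for a smooth potential $h$ diverging appropriately at the two boundary components. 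The assumption $\scal_{\bar g} \geq \sigma > 0$ is precisely what permits a choice of $h$ with the coercivity needed for a smooth stable minimizer $\Sigma_j$ to exist strictly between $\partial U_r$ and $\partial K_j$ (smoothness is automatic as $\dim \bar M \leq 7$), and the resulting stability inequality together with a conformal rescaling produces a psc metric on $\Sigma_j$. By construction $[\Sigma_j] = [\bar Y \times S^1]$ in $H_{n-1}(\bar M \setminus \bar Y; \Z)$, so $(\Phi \times \theta)|_{\Sigma_j}$ has degree $\deg \phi \neq 0$ onto $Y_0 \times S^1$.

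It remains to observe that $Y_0 \times S^1$ is aspherical and $\NPSC$: given any closed orientable $Z$ of dimension $n-1 \leq 6$ with a non-zero degree map $f \colon Z \to Y_0 \times S^1$, projecting to the $S^1$ factor and taking a regular preimage yields a codimension one submanifold $W \subset Z$ with $f|_W \colon W \to Y_0$ of non-zero degree; if $Z$ admitted psc, Schoen--Yau descent (valid as $\dim Z \leq 7$) would produce a smooth stable minimal representative $W'$ of the homology class $[W]$ carrying a psc metric while retaining a non-zero degree map to $Y_0$, contradicting the $\NPSC$ property of $Y_0$. Hence at least one connected component of $\Sigma_j$ is a closed psc manifold admitting a non-zero degree map to the aspherical $\NPSC$ manifold $Y_0 \times S^1$, the desired contradiction. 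The main obstacle is the $\mu$-bubble construction in the non-compact ambient $\bar M$: one must set up the variational problem so that the minimizer is interior (giving a closed hypersurface), lies in the correct homology class, and inherits positive scalar curvature---the uniform psc hypothesis is essential for all three points.
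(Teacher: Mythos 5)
Your overall strategy—verifying that $Y_0 \times \Sphere^1$ is $\NPSC$, passing to the cover corresponding to $\pi_1 Y$, extending $\phi$ and combining with the circle map supplied by triviality of the normal bundle, and then using a $\mu$-bubble to produce a psc hypersurface with non-zero degree onto $Y_0 \times \Sphere^1$—is the same core idea as the paper's. The $\NPSC$ claim for $Y_0\times\Sphere^1$ is also handled in the paper by taking a transversal preimage of $Y_0\times\{\ast\}$, though the paper then just invokes its codimension-one result (\cref{thm:npsc_theorem}) rather than re-running a Schoen--Yau descent, which is cleaner and avoids re-checking regularity and degree preservation for the minimizing hypersurface.

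Where your argument has a genuine gap is precisely in the step you flag at the end: setting up the $\mu$-bubble in $K_j \setminus U_r^\circ$. This region has two boundary parts, and \cref{prop:bubbles} requires the barrier conditions $\mean(\partial_\pm X,g) > \pm h|_{\partial_\pm X}$. The outer boundary $\partial K_j$ has uncontrolled mean curvature, and more seriously the inner boundary $\partial U_r \cong \bar Y \times \Sphere^1$ has mean curvature (with respect to the interior normal of $K_j\setminus U_r^\circ$, which points away from $\bar Y$) that blows up like $-1/r$ as $r\to 0$, so it is strongly concave; making the barrier condition hold there forces $h$ to be very large on $\partial U_r$ and then one must recheck the coercivity $\Sc + \tfrac{n}{n-1}h^2 - 2|\nabla h| > 0$ across a band whose width you also want to make large. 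None of this is carried out, and it does interact nontrivially with the quantitative inequalities. The paper circumvents exactly this by taking the double $\double{W}$ of $W = X\setminus\mathcal{U}$ along $\partial W \cong Y\times\Sphere^1$: the result is an \emph{open band without boundary}, the map to $Y_0\times\Sphere^1$ extends by reflection (using the retraction $\pi_1 W \twoheadrightarrow \pi_1 Y \times \Z$ from \cref{lem:extension_to_circle} together with asphericity of $Y_0\times\Sphere^1$), \cref{lem:npsc_implies_pB} shows $\double W$ has \cref{pB}, and one modifies the lifted metric only in a compact neighborhood of $\partial W$ so that the conclusion follows from the ready-made \cref{prop:no_upsc_outside_compact}. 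The doubling move is not cosmetic—it is what eliminates the boundary/barrier bookkeeping that your proposal leaves unresolved—so you should either adopt it or actually supply the potential and barrier estimates for the annular bands $K_j \setminus U_r^\circ$.
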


Similarly as before, this includes a version of \cref{thm:main_theorem_codim2} in dimensions \(n \leq 5\):

\begin{cor}
  Let \( 6 \neq n \leq 7\).
  Let \(M\) be an orientable connected \(n\)-dimensional manifold and let \(Y \subset M\) be a closed connected \((n-2)\)-dimensional submanifold with trivial normal bundle such that the inclusion induces an injection \(\pi_1 Y \hookrightarrow \pi_1 M\) and a surjection \(\pi_2 Y \twoheadrightarrow \pi_2 M\).
  Suppose that \(Y\) does not admit a metric of positive scalar curvature.
  Then \(M\) does not admit a complete metric of uniformly positive scalar curvature.
\end{cor}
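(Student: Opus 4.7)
The plan is to split the permitted range $n \in \{2,3,4,5,7\}$ into two parts and appeal to the two preceding theorems. When $n = 7$, the hypotheses are literally those of \cref{thm:main_theorem_codim2}, noting that $Y$ inherits an orientation from $M$ via the trivialisation of its normal bundle; so this case is immediate.

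For $n \leq 5$ we have $\dim Y \leq 3$, and the strategy is to reduce to \cref{thm:npsc_theorem_codim2} by producing a non-zero degree map $\phi \colon Y \to Y_0$ to a closed oriented aspherical $\NPSC$ manifold $Y_0$. For $\dim Y = 3$ this is the genuinely non-trivial step, supplied by Perelman's Geometrization together with the fact, recalled in the introduction, that closed oriented aspherical manifolds of dimension $\leq 5$ are $\NPSC$ by~\cite{chodosh_li,chodosh2021classifying,Gromov:5d}: concretely, a closed oriented $3$-manifold that does not admit psc admits a non-zero degree map to a closed oriented aspherical $3$-manifold. For $\dim Y = 2$ the argument is elementary: a closed oriented surface not admitting psc has genus at least one, is therefore aspherical, and is itself $\NPSC$ since by Gauß--Bonnet any closed oriented surface receiving a non-zero degree map from it has non-positive Euler characteristic and hence does not admit psc; one simply takes $Y_0 = Y$ and $\phi = \mathrm{id}$.

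The remaining cases $n \in \{2,3\}$ are either vacuous or degenerate and, as already remarked in the paper for the codimension-one analogue \cref{cor:rosenberg-stolz-codim1}, can in any event be subsumed by classical index-theoretic results (e.g.~\cite{Cecchini:CalliasTypePSC,HankePapeSchick:CodimensionTwoIndex,Zeidler:IndexObstructionPositive}), using that an oriented manifold of dimension $\leq 3$ which does not admit psc is automatically spin with non-vanishing Rosenberg index. The main obstacle in the overall argument is thus not the corollary itself but the non-trivial geometric input needed to supply $Y_0$ in dimension three; once Geometrization and the $\NPSC$ property of closed aspherical $3$-manifolds are granted, the corollary follows purely formally from \cref{thm:main_theorem_codim2,thm:npsc_theorem_codim2} with no new analytic work required.
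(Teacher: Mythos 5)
Your proof is correct and follows essentially the same route as the paper: for $n=7$ the statement is precisely \cref{thm:main_theorem_codim2}, and for $n\leq 5$ the bound $\dim Y\leq 3$ guarantees a non-zero degree map to a closed aspherical $\NPSC$ manifold (via Gauß--Bonnet in dimension $2$ and Geometrization in dimension $3$, as the paper notes in its introductory footnote), so \cref{thm:npsc_theorem_codim2} applies. The extra remarks about $n\in\{2,3\}$ and index-theoretic fallbacks are fine but not needed, since those cases are already handled by the same reduction.
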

\begin{proof}
  For \(n = 7\), this is a restatement of \cref{thm:main_theorem_codim2}.
  If \(n \leq 5\), then \(\dim(Y) \leq 3\) and so it admits a non-zero degree map \(Y \to Y_0\), where \(Y_0\) is aspherical and \(\NPSC\).
  Thus \cref{thm:npsc_theorem_codim2} is applicable in this case.
\end{proof}

Specializing to \(M = Y \times \R^2\), we finally obtain:

\begin{cor}\label{cor:rosenberg-stolz-codim2}
  \cref{conj:main2} holds for orientable manifolds in dimensions \( 6 \neq n \leq 7\).
\end{cor}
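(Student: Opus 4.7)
The plan is to apply the preceding corollary to $M \coloneqq Y \times \R^2$, taking the embedding $Y = Y \times \{0\} \hookrightarrow Y \times \R^2$ as the codimension-two submanifold. This reduces the statement of \cref{conj:main2} in the relevant dimension range to a direct verification of hypotheses.

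First I would reduce to the case where $Y$ is connected. If $Y$ decomposes as a disjoint union, then at least one component $Y_0 \subseteq Y$ still fails to admit positive scalar curvature (since a disjoint union carries psc if and only if each of its components does), and a complete metric of uniformly positive scalar curvature on $Y \times \R^2$ would restrict to one on the open subset $Y_0 \times \R^2$. Thus it suffices to treat the case of connected $Y$. With $Y$ closed, connected, and orientable of dimension $n-2$, the product $M = Y \times \R^2$ is an orientable connected $n$-manifold with $6 \neq n \leq 7$.

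It then remains to verify the remaining hypotheses of the preceding corollary for the inclusion $Y \hookrightarrow M$. The normal bundle of $Y \times \{0\}$ in $Y \times \R^2$ is trivial by construction. Since the projection $Y \times \R^2 \to Y$ is a homotopy equivalence, the inclusion induces isomorphisms on all homotopy groups; in particular, $\pi_1 Y \to \pi_1 M$ is injective and $\pi_2 Y \to \pi_2 M$ is surjective. Since $Y$ is assumed not to admit positive scalar curvature, the preceding corollary applies and yields that $M = Y \times \R^2$ admits no complete metric of uniformly positive scalar curvature, which is exactly the conclusion of \cref{conj:main2}.

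There is no substantial obstacle: the argument is a routine specialization of the preceding corollary. I would remark that the $\pi_2$-surjectivity hypothesis, which is genuinely necessary in general (as is witnessed by the example $M = Y \times \Sphere^2$ noted after \cref{thm:main_theorem_codim2}), is automatic for product manifolds of the form $Y \times \R^2$, so no additional geometric input beyond the preceding corollary is needed.
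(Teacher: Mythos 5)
Your proof is correct and is essentially the argument the paper leaves implicit when it says ``Specializing to $M = Y\times\R^2$, we finally obtain.'' You spell out the verification of each hypothesis of the preceding corollary, including the reduction to connected $Y$, which the paper does not state explicitly but which is indeed needed because that corollary assumes $Y$ connected.
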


Again we like to point out that the low-dimensional cases \(n \leq 5\) of \cref{cor:rosenberg-stolz-codim2} were already known due to index-theoretic results~\cites{HankePapeSchick:CodimensionTwoIndex}[Theorem~1.10]{Zeidler:band-width-estimates}.

This article is organized as follows: In \cref{sec:Bands}, we prepare an abstract setup for the study of manifolds with at least two ends which underpins our work.
In \cref{sec:part_comparison}, we state quantitative comparison results in the spirit of \cref{conj:bands} which are then used together with topological arguments in \cref{sec:obstr_bands} to prove our codimension one results.
The codimension two results are deduced in \cref{sec:codimension_two}.
Finally, in \cref{sec:proof_part_comparison} we provide the analytic proofs of the comparison statements from \cref{sec:part_comparison}.
\subsection*{Acknowledgements}
The authors would like to thank Georg Frenck for helpful discussions and suggestions as well as the anonymous referee for useful comments.

\section{Bands}\label{sec:Bands}
A natural class of manifolds generalizing the situation of \cref{conj:main1} are connected non-compact manifolds, where we partition the set of ends into two parts and we consider hypersurfaces separating these parts from each other.
To make this precise, we will make use of the Freudenthal end compactification~\cite{Freudenthal:Enden} of a connected manifold \(M\), denoted by \(\Frthl M = M \cup \E M\), where \(\E M\) is the space of ends.
By definition, the space of ends is the inverse limit \(\E M = \varprojlim \pi_0(M \setminus K)\), where \(K\) runs over compact subsets of \(M\) and each set of components \(\pi_0(M \setminus K)\) is endowed with the discrete topology.
\begin{defi}\label{def:openband}\ 
\begin{myenumi}
\item An \emph{open band} is a connected non-compact manifold $M$ without boundary and a decomposition
\[\E M=\E_-M\sqcup\E_+M,\]
where $\E_\pm M$ are non-empty closed\footnote{While the space of ends is totally disconnected, it is in general not necessarily discrete.
In this case it is important to assume \(\E_\pm M\) to be closed (and thus clopen) subsets of \(\E M\).} subsets \(\E_\pm M \subset \E M\); in particular $M$ has at least two ends.
\item Given an open band \(M\), a \emph{separating hypersurface} \(\Sigma \subset M\) is a compact hypersurface which separates each end in \(\E_-M\) from every end in \(\E_+ M\). Moreover, we say a separating hypersurface \(\Sigma \subset M\) is \emph{properly separating} if every component of \(\Sigma\) can be connected to both \(\E_+ M\) and \(\E_- M\) inside \(M \setminus \Sigma\).
\item Let \(\Sigma_-, \Sigma_+ \subset M\) be two properly separating hypersurfaces in an open band \(M\). 
Then we write \(\Sigma_- \prec \Sigma_+\) if the hypersurface \(\Sigma_-\) is contained in the union of those components of \(M \setminus \Sigma_+\) that contain the ends in \(\E_- M\) (or equivalently, \(\Sigma_+\) is contained in the union of those components of \(M \setminus \Sigma_-\) containing \(\E_+ M\)).
\end{myenumi}
\end{defi}

The condition of being \emph{properly} separating simply means that there are no superfluous components as the observation recorded in the following lemma illustrates.

\begin{lem}\label{lem:proper_separating}
  Let \(\Sigma \subset M\) be a separating hypersurface in an open band \(M\).
  Then there exists a union of components of \(\Sigma\) which is a properly separating hypersurface in \(M\).
\end{lem}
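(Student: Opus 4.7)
The plan is to extract a properly separating hypersurface by taking a minimal separating subcollection of the connected components of $\Sigma$. Since $\Sigma$ is a compact hypersurface, it has only finitely many connected components. Call a subcollection $\mathcal{C}$ of components of $\Sigma$ \emph{separating} if the union of its elements is a separating hypersurface in $M$. The full collection is separating by hypothesis, so by finiteness there exists a \emph{minimal} separating subcollection $\mathcal{C}_0$; set $\Sigma_0 := \bigcup_{C \in \mathcal{C}_0} C$.

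I will show that $\Sigma_0$ is properly separating. Fix a component $C \in \mathcal{C}_0$ and suppose, for contradiction, that $C$ is not connected to $\E_+ M$ inside $M \setminus \Sigma_0$ (the case of $\E_- M$ is symmetric). By minimality, $\Sigma_0 \setminus C$ is \emph{not} separating, so there exists a proper path $\gamma$ in $M \setminus (\Sigma_0 \setminus C)$ running from some end in $\E_- M$ to some end in $\E_+ M$. Since $\Sigma_0$ itself is separating, $\gamma$ must intersect $C$. Let $t_0 := \sup\{t : \gamma(t) \in C\}$; then $\gamma(t_0) \in C$ while the tail of $\gamma$ past time $t_0$ avoids $\Sigma_0$ entirely and exits towards an end in $\E_+ M$. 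Hence this tail lies in a single connected component $U$ of $M \setminus \Sigma_0$, and $U$ accumulates at an end in $\E_+ M$. But $\gamma(t_0) \in C$ lies in the closure of $U$, so $C$ is connected to $\E_+ M$ inside $M \setminus \Sigma_0$, contradicting our assumption.

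The main obstacle is really just in carefully formalising what it means for a component of $\Sigma_0$ to be \emph{connected to an end} in $M \setminus \Sigma_0$. The cleanest formulation goes through the Freudenthal compactification $\Frthl M$: $C$ is connected to $\E_\pm M$ inside $M \setminus \Sigma_0$ precisely if $C$ is contained in the closure (taken in $\Frthl M$) of a connected component of $M \setminus \Sigma_0$ whose closure meets $\E_\pm M$. Likewise, the failure of separation of $\Sigma_0 \setminus C$ must be interpreted as the existence of a connected component of $M \setminus (\Sigma_0 \setminus C)$ whose closure in $\Frthl M$ meets both $\E_- M$ and $\E_+ M$; the proper curve $\gamma$ used above can then be produced inside such a component. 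Once these topological definitions are sorted out, the combinatorial last-crossing argument sketched above goes through without further difficulty.
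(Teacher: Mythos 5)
Your proposal is correct and follows essentially the same route as the paper: pass to a minimal separating subcollection of components of $\Sigma$ and argue that minimality forces every surviving component to be connected to both $\E_- M$ and $\E_+ M$. The only difference is that you spell out, via the last-crossing argument along a proper path, why deleting a component not connected to both ends cannot destroy the separating property — a step the paper asserts without detail.
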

\begin{proof}
  Suppose that \(\Sigma\) is a separating hypersurface that contains a component not connected to both \(\E_- M\) and \(\E_+ M\) inside \(M \setminus \Sigma\). 
  Then the hypersurface \(\Sigma'\) obtained from \(\Sigma\) by deleting this component is still a separating hypersurface.
  This shows that a minimal collection of components of \(\Sigma\) such that its union is still separating yields the desired properly separating hypersurface.
\end{proof}

The next elementary lemma shows that (properly) separating hypersurfaces always exist and we can find them arbitrarily far out.

\begin{lem}\label{lem:exist_separating}
Let \(M\) be an open band and \(K \subset M\) be an arbitrary compact subset.
Then there exists a properly separating hypersurface \(\Sigma \subset M\) which also separates \(K\) from \(\E_+ M\) (or \(\E_- M\), respectively).
\end{lem}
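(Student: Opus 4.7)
The plan is to realise $\Sigma$ as a union of components of the boundary of a suitable large connected compact region $K' \subset M$, namely those boundary components which face the ends in $\E_+ M$. The key input is a compact exhaustion of $M$ chosen large enough that the clopen partition $\E M = \E_- M \sqcup \E_+ M$ is already detected at a finite stage by the connected components of the complement.

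Concretely, fix an exhaustion $K_1 \subset K_2 \subset \cdots$ of $M$ by compact connected smooth codimension-zero submanifolds with boundary; such exhaustions exist on any connected smooth manifold. For each $n$, $\pi_0(M \setminus K_n)$ is finite, and the canonical maps $\pi_n \colon \E M \to \pi_0(M \setminus K_n)$ exhibit $\E M$ as the inverse limit of this system. In particular the sets $\pi_n^{-1}(C)$ for $C \in \pi_0(M \setminus K_n)$ form a clopen basis of the compact Hausdorff totally disconnected space $\E M$, and a standard compactness argument (pick a basic clopen neighbourhood in $\E_- M$ around each point and extract a finite subcover at a common level) produces an $n$ for which both $\E_- M$ and $\E_+ M$ are of the form $\pi_n^{-1}(P_\pm^n)$ for subsets $P_\pm^n \subset \pi_0(M \setminus K_n)$, with all remaining components of $M \setminus K_n$ containing no ends of $M$. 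Enlarging $n$ we may additionally ensure $K \subset K_n^\circ$; write $K' := K_n$.

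Let $\Sigma \subset \partial K'$ be the union of those components of $\partial K'$ which border a component of $M \setminus K'$ lying in $P_+^n$. Then $\Sigma$ is a compact smooth hypersurface, non-empty because $P_+^n \neq \emptyset$ and every element of $P_+^n$ meets $\partial K'$ in at least one component. The connected components of $M \setminus \Sigma$ are exactly: each $P_+^n$ component of $M \setminus K'$ separately, and one big component built from $K' \setminus \Sigma$ together with all other components of $M \setminus K'$, glued along $\partial K' \setminus \Sigma$. The ends contained in the big component are precisely those in $\E_- M$, while the ends contained in a $P_+^n$ component are exactly those in $\E_+ M$. Hence $\Sigma$ separates $\E_- M$ from $\E_+ M$ and, since $K \subset K'$, also separates $K$ from $\E_+ M$. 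For proper separation, observe that every component of $\Sigma$ borders a $P_+^n$ component on one side (giving access to $\E_+ M$) and the connected set $K'^\circ$ on the other; since $K'$ is connected and $P_-^n \neq \emptyset$ (as $\E_- M \neq \emptyset$), one reaches a $P_-^n$ component from $K'^\circ$ through $\partial K' \setminus \Sigma$ without crossing $\Sigma$, and hence $\E_- M$. The variant separating $K$ from $\E_- M$ is obtained by swapping the roles of $P_+^n$ and $P_-^n$.

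The only non-routine ingredient is the finite-stage detection of the clopen partition $\E M = \E_- M \sqcup \E_+ M$, which uses nothing beyond the Stone space structure of $\E M$ coming from the inverse-limit description of the end space. Everything else is elementary differential topology, with the essential use of connectedness of $K'$ being that $\partial K' \setminus \Sigma$ is non-empty and provides the required path from $K'^\circ$ into a $P_-^n$ component inside $M \setminus \Sigma$.
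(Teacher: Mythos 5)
Your proof is correct, but it takes a genuinely different route from the paper's. The paper invokes Urysohn's lemma on the (compact Hausdorff, second countable) end compactification $\Frthl M$ to produce a continuous function $f \colon \Frthl M \to [-1,1]$ with $\E_\pm M = f^{-1}(\pm 1)$, thickens a regular-value-like level set to a compact codimension-zero submanifold $V$, takes $\partial V$ as a separating hypersurface, and finally extracts a properly separating one by applying \cref{lem:proper_separating}; separation of $K$ is then read off from the values of $f$. You instead exploit the Stone-space / inverse-limit description $\E M = \varprojlim \pi_0(M \setminus K_n)$ over a smooth connected compact exhaustion, detect the clopen partition $\E_-M \sqcup \E_+M$ at a single finite stage $K'=K_n$ large enough to contain $K$, and then take $\Sigma$ to be exactly those boundary components of $K'$ facing a $P_+^n$ component; the proper-separation property falls out directly because each such component has $K'^\circ$ on its inner side and a $P_+^n$ component on its outer side, and $K'^\circ$ is connected to a $P_-^n$ component through $\partial K' \setminus \Sigma$. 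Your approach avoids \cref{lem:proper_separating} entirely at the cost of spelling out the combinatorics of $\pi_0(M\setminus K')$ by hand, while the paper's Urysohn argument is shorter precisely because it delegates that bookkeeping to \cref{lem:proper_separating}. Both rely on essentially the same topological input (the compact, totally disconnected end space of a connected manifold) and both give what is needed. One small point worth tightening: when you say ``every element of $P_+^n$ meets $\partial K'$ in at least one component,'' you mean that every such component of $M\setminus K'$ is adjacent to at least one component of $\partial K'$ (their closures meet), which follows from connectedness of $M$; the components of $M\setminus K'$ themselves are disjoint from $\partial K'$.
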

\begin{proof}
Note that the end compactification \(\Frthl M\) is a compact Hausdorff space which in our case of a connected manifold is also second countable.
Thus, since \(\E_\pm M\) are two disjoint closed subsets of \(\Frthl M\), Urysohn's lemma implies the existence of a continuous function \(f \colon \Frthl M \to [-1,1]\) such that \(\E_\pm M = f^{-1}(\pm 1)\).
Since \(K \subseteq M\) is compact, there exists \(0 < r < 1\) such that \(K \subseteq f^{-1}([-r,r])\).
Choose \(s \in (r,1)\).
Then \(f^{-1}(s) \subseteq M\) is a compact subset which separates \(\E_- M\) from \(\E_+ M\).
Now choose a connected compact \(n\)-dimensional submanifold \(V \subset M\) with boundary, where \(n = \dim(M)\), such that \(f^{-1}(s) \subseteq \mathring{V} \subseteq f^{-1}([s - \varepsilon, s+ \varepsilon])\) for some \(\varepsilon > 0\) with \(r < s - \varepsilon\).
Then \(\partial V\) is a separating hypersurface and it contains a properly separating hypersurface \(\Sigma \subseteq \partial V\) by \cref{lem:proper_separating}.
Since by construction \(f(x) \leq r < s - \varepsilon \leq f(y) \leq s + \varepsilon < 1\) for each \(x \in K\) and \(y \in \Sigma\), it follows that \(\Sigma\) must separate \(K\) from \(\E_+ M\).
A completely analogous argument also provides a properly separating hypersurface that separates \(K\) from \(\E_- M\).
\end{proof}

In the spirit of \cref{conj:main1} we will be interested in open bands with:

\begin{property}\label{pB}
No separating hypersurface admits a metric of positive scalar curvature.
\end{property}

We will also work with compact bands, which may be viewed as a special case of open bands in the following sense:

\begin{defi}\label{def:band}
A \emph{compact band}, to which we will often simply refer to as a \enquote{\emph{band}}, is a connected compact manifold $X$ together with the structure of an open band on its interior \(\mathring{X}\). 
In other words, this amounts to a decomposition \(\p X=\p_-X\sqcup\p_+X\), where $\p_\pm X$ are (non-empty) unions of boundary components.
\end{defi}

The notions of (properly) separating hypersurfaces and \cref{pB} make sense for compact bands by considering them for the interior.

We also note that, if $M$ is an open band and $\Sigma_\pm$ are two properly separating hypersurfaces such that \(\Sigma_- \prec \Sigma_+\), then the hypersurfaces $\Sigma_\pm$ bound a compact band \(X \subset M\).
In this case, if \(M\) has \cref{pB}, then so has \(X\).

\section{The partitioned comparison principle}
\label{sec:part_comparison}
In this section, we establish the main analytic results on which our main theorems rely.
The central concept we study here is the $\wid(X,g)$ of a compact band \(X\) endowed with a Riemannian metric \(g\), that is, the distance between $\p_-X$ and $\p_+X$ with respect to \(g\).
As is explained in \cite[Section 3.6]{gromovFourLecturesScalar2019v6} the width of an $n$-dimensional Riemannian band $(X,g)$ with $\Sc\geq n(n-1)$ is bounded from above by $\frac{2\pi}{n}$ if $X$ has \cref{pB} and $n\leq7$.
We will work in the setting where $(X,g)$ has \cref{pB} and is partitioned into multiple segments with possibly different lower scalar curvature bounds. It turns out that in this case positivity of the scalar curvature in a single segment can often have global effects on the geometry of $(X,g)$.

\begin{defi}
Let $X$ be a compact band and let $\Sigma_i$, for $i\in\{1,\dots, k\}$, be properly separating hypersurfaces such that \(\Sigma_{i-1} \prec \Sigma_i\) for $1 < i \leq k$. We call the triple $(X,\Sigma_i,k)$ a \emph{partitioned band} and denote by $V_j$, for $j\in\{1,\ldots, k+1\}$, the segment of $X$ bounded by $\Sigma_{j-1}$ and \(\Sigma_j\), where \(\Sigma_0 = \partial_- X\) and \(\Sigma_{k+1} = \partial_+ X\).
\end{defi}

\begin{defi}\label{def:logconcave}
A smooth function $\varphi \colon [a,b]\to\R_+$ is called $\log$-\emph{concave} if 
\[\frac{\D{}^2}{\D{t}^2}\log(\varphi(t))=\left(\frac{\varphi'(t)}{\varphi(t)}\right)'\leq0\]
for all $t\in[a,b]$.
If the inequality is strict, then $\varphi$ is called \emph{strictly $\log$-concave}.
\end{defi}

\begin{defi}\label{def:model}
Let $(N,g_N)$ be a a closed scalar flat Riemannian manifold. A warped product
\[(M,g_\varphi)=(N\times[a,b],\varphi^2(t)g_N+\D{t}^2)\]
is called a \emph{model space} if $\Sc(M,g_\varphi)$ is constant and $\varphi$ is strictly $\log$-concave.
\end{defi}

\begin{rem}
This notion of a model space arises in the scalar and mean curvature comparison theory surrounding Conjecture \ref{conj:bands}.
In \cites{Cecchini-Zeidler:ScalarMean,Rae21} a compact Riemannian band $(X,g)$ is compared, in scalar curvature, mean curvature and width, to a warped product $(M,g_\varphi)$ over an arbitrary scalar flat manifold $(N,g_N)$ with strictly $\log$-concave warping function.
It turns out that, if $X$ has Property \ref{pB}, $\Sc(X,g)\geq\Sc(M,g_\varphi)$ and $\mean(\p X,g)\geq \mean(\p M,g_\varphi)$, then $\width(X,g)\leq\width(M,g_\varphi)$.
Furthermore, Cecchini and Zeidler \cite[Theorem 8.3, Theorem 9.1]{Cecchini-Zeidler:ScalarMean} showed that, under further topological assumptions on $X$, equality of widths can only be achieved if $(X,g)$ itself is isometric to such a warped product.
In this section we adapt these ideas and compare a compact partitioned Riemannian band with Property \ref{pB} to a sequence of model spaces, whose scalar curvatures may differ, but whose mean curvatures fit together pairwise. 
\end{rem}

Let $(N,h)$ be a compact Riemannian manifold with boundary and let $\p_jN$ be a connected component of the boundary of $N$.
Then we denote by $\mean(\p_j N,h)$ the mean curvature of $\p_jN$ with respect to the interior unit normal field.
According to our convention, the boundary of the unit disk in $\R^3$ has mean curvature $2$.
In the following theorem, we give estimates on the widths of segments inside a partitioned band based on scalar and mean curvature. 

\begin{thm}\label{thm:general}
Let $n\leq7$ and let $(X,\Sigma_i,k)$ be an orientable partitioned $n$-dimensional band with \cref{pB}. 
For $j\in\{1,\ldots,k+1\}$, let $(M_j,g_{\varphi_j})$ be model spaces over a fixed closed scalar flat Riemannian manifold $(N,g)$. If
\begin{itemize}
\item[$\triangleright$] $\Sc(V_j,g)\geq\Sc(M_j,g_{\varphi_j})$ for all \(j\in\{1,\ldots,k+1\}\),
\item[$\triangleright$] $\mean(\p_-X,g)\geq \mean(\p_-M_1,g_{\varphi_1})$ and $\mean (\p_+X,g)\geq \mean(\p_+M_{k+1},g_{\varphi_{k+1}})$,
\item[$\triangleright$] $\mean(\p_+M_j,g_{\varphi_j})=-\mean(\p_-M_{j+1},g_{\varphi_{j+1}})$ for all $j\in\{1,\ldots,k\}$, 
\end{itemize}
then $\wid(V_j,g)\leq\wid(M_j,g_{\varphi_j})$ for at least one $j\in\{1,\ldots,k+1\}$.
\end{thm}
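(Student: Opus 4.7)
The approach is by contradiction: suppose $\wid(V_j,g) > w_j := \wid(M_j, g_{\varphi_j})$ for every $j \in \{1,\ldots,k+1\}$, and use a $\mu$-bubble argument in the spirit of \cite{Cecchini-Zeidler:ScalarMean, Rae21} adapted to the partitioned setting. The plan is to produce a separating hypersurface $\Sigma^\ast \subset X$ carrying a metric of positive scalar curvature, in direct contradiction with \cref{pB}.

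The first step is to concatenate the warping functions $\varphi_j$ into a single global comparison function. The matching condition $\mean(\p_+ M_j) = -\mean(\p_- M_{j+1})$ is precisely the statement that the logarithmic derivatives $\varphi_j'/\varphi_j$ and $\varphi_{j+1}'/\varphi_{j+1}$ agree at the common interface point. Multiplying each $\varphi_j$ by an appropriate positive constant (which leaves both $\Sc(M_j,g_{\varphi_j})$ and strict log-concavity invariant) and translating the intervals to lie consecutively on $[0,L]$ with $L := w_1 + \cdots + w_{k+1}$, one obtains a $C^1$, piecewise smooth, strictly log-concave function $\varphi \colon [0,L] \to \R_+$. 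A small smoothing at the finitely many interface points, together with an extension to $[-\delta, L+\delta]$ along which $\varphi'/\varphi \to \mp\infty$ at the two new endpoints, yields a smooth strictly log-concave $\ti\varphi$; the slack in $\Sc(V_j,g) \geq \Sc(M_j,g_{\varphi_j})$ absorbs the scalar curvature errors introduced by the smoothing at the interfaces.

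Next, encode the partition geometrically by a 1-Lipschitz function $\sigma \colon X \to [-\delta, L+\delta]$ with $\sigma \equiv -\delta$ on $\p_- X$, $\sigma \equiv L+\delta$ on $\p_+ X$, and $\sigma \equiv \tau_j := \sum_{i \leq j} w_i$ in a neighbourhood of each $\Sigma_j$. Such a $\sigma$ exists precisely because $\wid(V_j, g) > w_j$ leaves room within each segment $V_j$ to interpolate with $|\nabla \sigma| < 1$. Define the potential $h := -(n-1)\,\ti\varphi'(\sigma)/\ti\varphi(\sigma)$ on $X$; it is smooth, blows up to $\pm\infty$ near $\p_\pm X$, and the boundary mean curvature hypotheses translate into the correct pointwise comparison between $h$ and $\mean(\p X, g)$ at $\p X$. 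These properties yield a smooth minimiser of the $\mu$-bubble functional $\mathcal{A}(\Omega) = \Ha^{n-1}(\p^\ast \Omega) - \int_\Omega h$ among Caccioppoli sets enclosing a neighbourhood of $\p_- X$; since $\dim X = n \leq 7$, its reduced boundary $\Sigma^\ast$ is a smooth embedded hypersurface that, by construction of $\Omega$, separates $\p_- X$ from $\p_+ X$ and hence is a separating hypersurface in the sense of \cref{def:openband}.

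Finally, we will test the stability inequality derived from the second variation of $\mathcal{A}$ at $\Sigma^\ast$ against a function of the form $u = \ti\varphi(\sigma)^\alpha|_{\Sigma^\ast}$, with exponent $\alpha$ chosen to match the standard conformal rescaling for the scalar curvature in dimension $n-1$. Using the Gauss equation, the inequality $\Sc(V_j, g) \geq \Sc(M_j, g_{\varphi_j})$, the bound $|\nabla \sigma| \leq 1$, and the ODE relating $\ti\varphi''/\ti\varphi$ to the scalar curvature of the model, one deduces that an appropriate conformal rescaling of $g|_{\Sigma^\ast}$ by a power of $\ti\varphi(\sigma)$ carries positive scalar curvature. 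Hence $\Sigma^\ast$ admits a psc metric, contradicting \cref{pB}. The hard part is this final stability-to-conformal-rescaling computation in the partitioned setting: because $\sigma$ is only 1-Lipschitz rather than the actual distance function, and because the comparison scalar curvature is piecewise (after smoothing, only approximately) constant, the \emph{strict} log-concavity of $\ti\varphi$ must be exploited carefully to absorb the deficit terms that arise. This strictness is precisely the positive buffer that keeps the final scalar curvature integral strictly positive, echoing the rigidity mechanism in \cite{Cecchini-Zeidler:ScalarMean, Rae21}.
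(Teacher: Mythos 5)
Your overall strategy — contradiction, a potential built from the model warping functions, a strictly 1-Lipschitz band map, a $\mu$-bubble, and a psc separating hypersurface contradicting \cref{pB} — is exactly the route the paper takes. However, there are a few concrete issues in the execution.

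\textbf{The boundary treatment is inconsistent.} You simultaneously ask that $\sigma \equiv -\delta$ on $\p_- X$ (and $\equiv L+\delta$ on $\p_+ X$), and that $\ti\varphi'/\ti\varphi \to \mp\infty$ at $t = -\delta, L+\delta$, while claiming that $h = -(n-1)\ti\varphi'(\sigma)/\ti\varphi(\sigma)$ is smooth. These cannot all hold: if $\sigma$ attains the endpoints on $\p X$ and the logarithmic derivative blows up there, $h$ is not even finite on $\p X$. Moreover, with a genuine blowup the mean curvature hypotheses $\mean(\p_\pm X,g) \geq \mean(\p_\pm M,g_\varphi)$ would never enter the argument, which is a red flag — the theorem is false without them. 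The paper instead keeps $h$ finite on $\p X$ and verifies a \emph{strict} barrier inequality $\mean(\p_\pm X,g) > \pm h|_{\p_\pm X}$, obtained by extending the domain of $h_{\varphi_1}$ (respectively $h_{\varphi_{k+1}}$) slightly past $a_1$ (respectively $b_{k+1}$): strict monotonicity of $h_\varphi$ then turns the nonstrict hypothesis into a strict barrier. This is the mechanism you would need to substitute for your blowup.

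\textbf{There is no slack in the scalar curvature hypothesis.} You write that ``the slack in $\Sc(V_j,g) \geq \Sc(M_j,g_{\varphi_j})$ absorbs the scalar curvature errors introduced by the smoothing at the interfaces''; but this hypothesis is not strict, so there is no slack there to spend. The actual source of the needed room is the strict log-concavity of $\varphi_j$: at an interface where $\nabla h = 0$ one has $\Sc(V_j,g) + \tfrac{n}{n-1}h^2 = \Sc(V_j,g) - \Sc(M_j,g_{\varphi_j}) - 2h_{\varphi_j}' \geq -2h_{\varphi_j}' > 0$. The paper packages this precisely in \cref{lem:potential}, whose fourth bullet point records the strict inequality exactly at the flattened points. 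Your concatenation-and-smoothing plan can be made to work, but you must appeal to this strictness rather than to nonexistent slack in the hypothesis. A cleaner variant — which is what the paper actually does — is to not smooth a global $\varphi$ at all, but to flatten each one-variable potential $h_{\varphi_j}$ separately on a slight extension of its interval and then compose with a band map $\beta_j\colon V_j \to [a_j-\eps,b_j+\eps]$ with $\Lip(\beta_j)<1$; the resulting $h$ is automatically smooth across the $\Sigma_i$ because it is locally constant there.

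\textbf{The last step is simpler than you make it.} Once the $\mu$-bubble $\Sigma^\ast$ is produced and conditions \labelcref{eq:ConformalLaplacianCondition,eq:BarrierCondition} hold, the stability inequality already gives a uniform positive lower bound for $\int_\Sigma (|\nabla\psi|^2 + \tfrac12\Sc(\Sigma)\psi^2)$ over $\int_\Sigma \psi^2$ for \emph{all} $\psi$ — i.e.\ $\Sigma^\ast$ is Yamabe positive — and then Schoen--Yau conformal rescaling (or Gau\ss--Bonnet when $n \leq 3$) gives the psc metric. You do not need to plug in a specific test function $\ti\varphi(\sigma)^\alpha$, and it is unclear such a choice is compatible with the geometry of the minimizer; reserving that type of warped-product comparison for rigidity statements, as in~\cite{Cecchini-Zeidler:ScalarMean}, is the right instinct but is overkill here.
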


The following result is more or less a direct application of Theorem \ref{thm:general}:

\begin{cor}\label{thm:analysis}
Let $n\leq7$ and let $(X,\Sigma_i,2)$ be an orientable partitioned $n$-dimensional band with \cref{pB}.
Let $g$ be a metric on $X$ and let $\kappa$ be a positive constant. If 
\begin{itemize}
\item[$\triangleright$] $\Sc(V_2,g)\geq\kappa n(n-1)$,
\item[$\triangleright$] $\Sc(X,g)\geq0$,
\end{itemize}
then $\min\{\wid(V_1,g),\wid(V_3,g)\}< \ell=\frac{2}{\sqrt{\kappa}n}\cot\left(\frac{\sqrt{\kappa}nd}{4}\right)$, where $d \coloneqq \wid(V_2,g)<\frac{2\pi}{\sqrt{\kappa}n}$.
\end{cor}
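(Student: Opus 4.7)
The plan is to apply \cref{thm:general} with a triple of carefully chosen model spaces, after modifying the boundary of $X$ via $\mu$-bubbles to gain control of its mean curvature. Fix a closed scalar-flat $(n-1)$-manifold $(N, g_N)$, e.g.\ a flat torus, and a small parameter $\eta > 0$. The middle model $M_2^\eta$ will be the warped product $N \times [-(d-2\eta)/2,\,(d-2\eta)/2]$ with warping function $\varphi_2(t) = \cos(\sqrt{\kappa}\,nt/2)^{2/n}$, which has $\Sc = \kappa n(n-1)$, width $d - 2\eta$, and both boundary components carry mean curvature $c_\eta \coloneqq (n-1)\sqrt{\kappa}\tan(\sqrt{\kappa}\,n(d-2\eta)/4)$. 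The outer models $M_1^\eta, M_3^\eta$ will be scalar-flat warped products over $N$ with warpings of the form $\varphi(t) = (\pm(t-a))^{2/n}$ (solutions of the scalar-flat ODE $2u'' + n(u')^2 = 0$ for $u = \log \varphi$), parametrised so that the interface matching condition $\mean(\p_+ M_1^\eta) = \mean(\p_- M_3^\eta) = -c_\eta$ of \cref{thm:general} holds. A direct ODE computation then shows that the widths $L_j^\eta$ of such models can take any value in $(0, \ell_\eta)$ with $\ell_\eta \coloneqq (2/(n\sqrt{\kappa}))\cot(\sqrt{\kappa}\,n(d-2\eta)/4)$, and that the outer-boundary mean curvatures of $M_1^\eta, M_3^\eta$ tend to $+\infty$ as $L_j^\eta \to \ell_\eta^-$. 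Note that $\ell_\eta \to \ell$ as $\eta \to 0^+$.

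Since the hypotheses of the corollary provide no control over $\mean(\partial_\pm X, g)$, we next replace $\partial_\pm X$ by stable $\mu$-bubbles $\Sigma_-' \subset \mathring{V}_1$ and $\Sigma_+' \subset \mathring{V}_3$ with arbitrarily large prescribed positive mean curvature, arbitrarily close to $\partial_- X$ and $\partial_+ X$ respectively. Such $\mu$-bubbles are obtained by minimising a weighted $h$-perimeter functional whose potential $h$ diverges to $+\infty$ near $\partial_\pm X$, as in the $\mu$-bubble machinery underlying \cref{thm:general}. The sub-band $X' \subset X$ bounded by $\Sigma_\pm'$ still satisfies \cref{pB}. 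Arguing by contradiction, assume $\wid(V_1, g), \wid(V_3, g) \geq \ell$. Pick $\eta, \delta > 0$ small, set $L_1^\eta = L_3^\eta \coloneqq \ell - 2\delta$, and arrange $\Sigma_\pm'$ at distance $\leq \delta/2$ from $\partial_\pm X$ with mean curvature exceeding the outer-boundary mean curvatures of $M_1^\eta, M_3^\eta$. Applying \cref{thm:general} to $X'$ with the models $M_1^\eta, M_2^\eta, M_3^\eta$ yields $\wid(V_j') \leq \wid(M_j^\eta)$ for some $j$. But $\wid(V_2) = d > d - 2\eta = \wid(M_2^\eta)$, and for $j \in \{1,3\}$ we have $\wid(V_j') \geq \ell - \delta/2 > \ell - 2\delta = \wid(M_j^\eta)$; contradiction.

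The auxiliary strict inequality $d < 2\pi/(\sqrt{\kappa}n)$ follows from the standard band width estimate (the case of \cref{thm:general} without internal partitions) applied to $V_2$, which inherits \cref{pB} from $X$ since any separating hypersurface of $V_2$ is also separating in $X$. The main obstacle is the $\mu$-bubble step: rigorously establishing existence, regularity, and the proper separation property of $\Sigma_\pm'$ with prescribed large mean curvature, along with verifying that the above choices of $\eta$ and $\delta$ can be made consistently. This is handled by the analytic machinery developed in \cref{sec:proof_part_comparison}, building on the $\mu$-bubble techniques of \cite{Rae21, gromovFourLecturesScalar2019v6}.
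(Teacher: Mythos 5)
Your choice of warping functions $\varphi_2(t)=\cos(\sqrt{\kappa}nt/2)^{2/n}$ and $\varphi_1(t)=t^{2/n}$ (reflected for $\varphi_3$) is exactly right, as is the overall strategy of feeding three model spaces into \cref{thm:general}, and the deduction of $d<2\pi/(\sqrt{\kappa}n)$ via the plain band width estimate on $V_2$ (which does inherit \cref{pB}). However, there is a sign error in your analysis of the outer boundary mean curvatures, and it is this error that leads you into the unnecessary and underspecified $\mu$-bubble detour.

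With the paper's convention (interior unit normal; the unit sphere bounding the disk has mean curvature $+2$), one finds $\mean(\p_-M_1,g_{\varphi_1})=-h_{\varphi_1}(t_-)=-\tfrac{2(n-1)}{n t_-}$, which tends to $-\infty$, not $+\infty$, as the left endpoint $t_-$ of the domain shrinks to $0$. Consequently, the condition of \cref{thm:general} that must be satisfied by the actual band, namely $\mean(\p_-X,g)\geq\mean(\p_-M_1,g_{\varphi_1})$, becomes vacuously easy to satisfy by choosing $t_-$ small: one simply picks $t_->0$ small enough that $\mean(\p_-X,g)\geq -h_{\varphi_1}(t_-)$ for the given metric $g$, and there is nothing further to do at $\p_\pm X$. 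This is precisely what the paper does. It then chooses $\delta_1>0$ (and, via the matching equation $h_{\varphi_2}((-d+\delta_1)/2)=h_{\varphi_1}(\ell+\delta_2)$, an accompanying $\delta_2>0$) small enough that $\delta_2<t_-$, so that $\wid(M_1)=\ell+\delta_2-t_-<\ell$, and the desired contradiction with $\wid(V_1,g)\geq\ell$ follows directly. Your step of replacing $\p_\pm X$ by stable $\mu$-bubbles $\Sigma_\pm'$ of ``arbitrarily large prescribed positive mean curvature, arbitrarily close to $\p_\pm X$'' is therefore not needed, and more importantly is not supported by the machinery available: the existence result for $\mu$-bubbles (e.g.\ \cite[Lemma~4.2]{Rae21}, used in \cref{prop:bubbles}) gives you a hypersurface whose mean curvature equals the potential $h$ at the bubble, but gives no control on where the bubble sits; there is no lemma in this framework producing, for an arbitrary complete metric, a hypersurface simultaneously confined to a $\delta/2$-collar of $\p_\pm X$ and with prescribed large mean curvature. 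You yourself flag this as the main obstacle. Once the sign is corrected, the obstacle simply disappears.
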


If, instead of $\Sc(X,g)\geq0$, one assumes that the scalar curvature of the partitioned band is bounded from below by a negative constant, \cref{thm:general} provides the following estimate, which is very much in the same spirit of~\cref{thm:analysis} and should be compared with Zeidler's result \cite[Theorem 1.4]{ZeidlerWidthLargeness} in the spin setting.

\begin{cor}\label{thm:analysis1}
Let $n\leq7$ and let $(X,\Sigma_i,2)$ be an orientable partitioned $n$-dimensional band with \cref{pB}.
Let $g$ be a metric on $X$ and let $\kappa$ be a positive constant. If 
\begin{itemize}
\item[$\triangleright$] $\Sc(V_2,g)\geq\kappa n(n-1)$,
\item[$\triangleright$] $\Sc(X,g)\geq-\sigma>-\kappa n(n-1)\tan\left(\frac{\sqrt{\kappa}nd}{4}\right)^2$, where $d \coloneqq \wid(V_2,g)<\frac{2\pi}{\sqrt{\kappa}n}$,
\end{itemize}
then $\min\{\wid(V_1,g),\wid(V_3,g)\}< \ell$, where $\ell$ is such that
\[\sqrt{\kappa}(n-1)\tan\left(\frac{\sqrt{\kappa}nd}{4}\right)=\sqrt{\frac{\sigma(n-1)}{n}}\coth\left(\frac{\sqrt{\sigma n}\ell}{2\sqrt{n-1}}\right).\]
\end{cor}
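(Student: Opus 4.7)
The plan is to apply \cref{thm:general} with $k=2$ to the partitioned band $(X,\Sigma_i,2)$ using three model spaces whose widths and scalar/mean curvatures are chosen so that the middle model has width strictly less than $\wid(V_2,g)=d$. This will force the conclusion $\wid(V_j,g)\leq\wid(M_j)$ of \cref{thm:general} to be realized for $j\in\{1,3\}$, and a straightforward parameter optimization will then yield the desired strict inequality.

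Fix a closed scalar flat Riemannian manifold $(N,g_N)$, e.g.\ a flat torus, set $\beta \coloneqq \sqrt{\sigma/(n(n-1))}$, and for a small parameter $\eta>0$ I would construct the three model spaces as follows. The middle model $M_2^\eta = N\times[-(d-\eta)/2,(d-\eta)/2]$ carries a symmetric cosine-type warping $\varphi_2(t)=\cos(\sqrt{\kappa}nt/2)^{2/n}$ (up to scale), giving $\Sc(M_2^\eta)\equiv \kappa n(n-1)$ and equal boundary mean curvatures $H_\eta \coloneqq (n-1)\sqrt{\kappa}\tan(\sqrt{\kappa}n(d-\eta)/4)$. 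The outer models $M_1^\eta, M_3^\eta$ are warped products over $N$ with $\Sc = -\sigma$, with the mean curvature at the face adjacent to $M_2^\eta$ set to $-H_\eta$ so that the matching condition of \cref{thm:general} holds. Solving the log-derivative ODE $h' = -(n/2)(h^2-\beta^2)$ on the branch $h>\beta$ produces an explicit $\coth$-type warping; by the hypothesis $\sigma < \kappa n(n-1)\tan(\sqrt{\kappa}nd/4)^2$, which is exactly the statement $H_0 > (n-1)\beta = \sqrt{\sigma(n-1)/n}$, this solution exists for $\eta$ sufficiently small, and a direct calculation gives the maximal admissible width
\[
  \ell_\eta^{\max} = \frac{2\sqrt{n-1}}{\sqrt{n\sigma}}\,\arcoth\!\left(\frac{H_\eta}{\sqrt{\sigma(n-1)/n}}\right),
\]
coinciding with the stated $\ell$ when $\eta=0$ and strictly larger than $\ell$ for $\eta>0$, since $\arcoth$ is decreasing on $(1,\infty)$ and $H_\eta < H_0$.

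Next I would verify the hypotheses of \cref{thm:general}. The scalar curvature bounds $\Sc(V_j,g)\geq \Sc(M_j^\eta)$ for all $j$ follow immediately from the corollary's assumptions, and the interior mean curvature matchings $\mean(\partial_+M_j^\eta) = -\mean(\partial_-M_{j+1}^\eta)$ hold by construction. The main obstacle is the outer condition $\mean(\partial_\pm X,g)\geq\mean(\partial_\mp M_{1,3}^\eta)$, since no a priori bound on $\mean(\partial_\pm X,g)$ is assumed and the right-hand side is positive. I expect to handle this via the $\mu$-bubble framework underlying \cref{thm:general}: by using a potential function that blows up near $\partial_\pm X$, the bubble is automatically confined to the interior of $X$, and the effective outer boundary condition is replaced by one on a sub-band whose mean curvature can be arranged as needed. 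Granted this, \cref{thm:general} applied with $\wid(M_1^\eta) = \wid(M_3^\eta) = \ell_\eta$ for any $\ell_\eta < \ell_\eta^{\max}$ yields $\wid(V_j,g)\leq \wid(M_j^\eta)$ for some $j\in\{1,2,3\}$; since $\wid(V_2,g)=d > d-\eta = \wid(M_2^\eta)$, the index $j=2$ is excluded, and we deduce $\min\{\wid(V_1,g),\wid(V_3,g)\}\leq \ell_\eta$.

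Finally, given any $\ell'\in(0,\ell)$, I would choose $\eta>0$ small enough that $\ell_\eta^{\max} > \ell'$ and take $\ell_\eta = \ell'$, obtaining $\min\{\wid(V_1,g),\wid(V_3,g)\}\leq \ell'$; since $\ell'<\ell$ is arbitrary, the strict inequality $\min\{\wid(V_1,g),\wid(V_3,g)\} < \ell$ follows.
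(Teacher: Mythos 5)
Your choice of warping functions and the overall strategy match the paper's proof exactly: $\varphi_2$ is the cosine model on the middle segment, and $\varphi_1,\varphi_3$ are $\sinh$-models whose associated potential is the $\coth$ in the corollary's formula. The parameter-tightening argument at the end and the identification of the critical width $\ell$ via the $\arcoth$ formula are also correct. However, there is a genuine gap in how you handle the outer boundary condition, and it stems from a sign error.

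You write that ``no a priori bound on $\mean(\partial_\pm X,g)$ is assumed and the right-hand side is positive'' and propose to patch this by replacing the outer boundary constraint with ``one on a sub-band whose mean curvature can be arranged as needed,'' i.e.\ by going back into the $\mu$-bubble machinery underneath \cref{thm:general}. This is both unnecessary and not worked out. With the paper's convention (interior normal, unit disk boundary has mean curvature $2$), the mean curvature of $\partial_- M_1$ is $\mean(\partial_- M_1,g_{\varphi_1}) = -h_{\varphi_1}(t_-)$, and since $h_{\varphi_1}(t)=\sqrt{\sigma(n-1)/n}\coth\bigl(\sqrt{\sigma n}\,t/(2\sqrt{n-1})\bigr)\to +\infty$ as $t\to 0^+$, this is \emph{negative} and in fact tends to $-\infty$. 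The resolution therefore stays entirely within the hypotheses of \cref{thm:general}: truncate the outer models at a left endpoint $t_-$ so close to the singularity at $0$ that $-h_{\varphi_1}(t_-)$ undercuts whatever $\mean(\partial_- X,g)$ happens to be (and symmetrically for $\partial_+ X$). You observed that ``$H_0 > (n-1)\beta$'' guarantees a $\coth$-branch solution exists, which is correct, but you should use the divergence of that branch near $0$ to satisfy the outer mean curvature hypothesis directly, rather than speculating about confining the $\mu$-bubble.

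One minor bookkeeping point: after truncating at $t_-$ (resp.\ $-t_+$), the widths of $M_1$ and $M_3$ are $\ell + \delta_2 - t_-$ and $\ell + \delta_4 + t_+$, so you must also arrange $\delta_2 < t_-$ and $\delta_4 < -t_+$ to keep these strictly below $\ell$ before invoking \cref{thm:general}; your $\eta$-optimization then carries through as you describe.
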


We postpone the proofs of \cref{thm:general,thm:analysis,thm:analysis1} to \cref{sec:proof_part_comparison}.

\section{Obstructions on open bands}\label{sec:obstr_bands}

We want to use \cref{thm:analysis} or \cref{thm:analysis1} to attack Conjecture \ref{conj:main1}.
If $X=Y\times\R$ and $g$ is a complete metric on $X$, we consider the compact segment $Y\times[-C,C]$ for any $C>1$, which is partitioned into the bands $Y\times[-C,-1]$, $Y\times[-1,1]$ and $Y\times[1,C]$.
If the scalar curvature of $(X,g)$ is assumed to be positive and $Y\times[-C,C]$ has Property \ref{pB}, the minimum of the widths of $(Y\times[-C,-1],g)$ and $(Y\times[1,C],g)$ is bounded from above in terms of $\wid(Y\times[-1,1],g)$ and the infimum of $\Sc(Y\times[-1,1],g)$. 
For $C>1$ large enough, this produces a contradiction.

In this section, we formulate the most general result that one can prove in this manner based on our notion of an open band from \cref{def:openband}.
Indeed, \cref{thm:analysis} or \cref{thm:analysis1} will yield obstructions to the existence of a complete metric of positive scalar curvature on an open band with \cref{pB}. 
Moreover, since any open band $M$ has at least two ends by definition, we can also establish a rigidity result using the Cheeger--Gromoll splitting theorem.
% We start with the observation that an open band admits a line.
% \begin{lem}[{cf.~\cite[Lemma~7.3.1]{Petersen}}]\label{prop:line}
% Let $M$ be an open band.
% If $g$ is a complete Riemannian metric on $M$, then there exists a geodesic line which connects $\E_-M$ and $\E_+M$.
% \end{lem}
% \begin{proof}
% Consider an exhaustion $K^i$ of $M$ by compact sets.
% Without loss of generality we assume that each $K_i$ has smooth boundary $\p K^i$ and $M\backslash K^i=U_-\sqcup U_+$, where $U_\pm$ are open sets with $\E_\pm  M\subset U_\pm$.
% We decompose \(\p K^i=\p_-K^i\sqcup\p_+K^i\) accordingly as $\p_\pm K^i:=\p U_\pm$.
% Let $\gamma_i$ be a length minimizing geodesic which connects $\p_-K^i$ and $\p_+K^i$.
% On each compact set $K^i$ the sequence $\gamma_i^{j\geq i}$ of geodesics connecting $\p_-K^i$ and $\p_+K^i$ subconverges to a length minimizing geodesic by Arzel\`a-Ascoli.
% Now a diagonal sequence (over $i$) converges to a geodesic line in $(M,g)$.
% \end{proof}

% Together with a deformation result due to Kazdan \cite{Ka82} we arrive at the following conclusion:

\begin{thm}\label{thm:analysis2}
Let $n\leq7$ and let $M$ be an open $n$-dimensional band with \cref{pB}. If $g$ is a complete metric on $M$ with nonnegative scalar curvature, then $(M,g)$ is isometric to \((Y\times\R,g_Y+dt^2)\),
where $(Y,g_Y)$ is a closed Ricci flat manifold.
\end{thm}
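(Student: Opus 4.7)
The plan is to first show that $g$ must be scalar-flat using the partitioned band estimate \cref{thm:analysis}, and then to upgrade this to the rigid splitting by a $\mu$-bubble stability argument combined with the Cheeger--Gromoll splitting theorem.

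\textbf{Step 1: reducing to $\Sc_g \equiv 0$.}
Suppose for contradiction $\Sc_g(p) > 0$ at some $p \in M$, so that $\Sc_g \geq \kappa n(n-1)$ on a closed ball $\bar{B}_r(p)$ for some $\kappa > 0$ and some $r < \tfrac{\pi}{\sqrt{\kappa}\,n}$. Adapting the Urysohn-style construction in the proof of \cref{lem:exist_separating} (shifting the separating function so that one of its level sets meets $B_{r/2}(p)$), one produces two properly separating hypersurfaces $\Sigma_1 \prec \Sigma_2$ in $M$ cobounding a compact band $V_2 \subset \bar{B}_r(p)$; in particular $\Sc_g|_{V_2} \geq \kappa n(n-1)$ and $\wid(V_2, g) \leq 2r < \tfrac{2\pi}{\sqrt{\kappa}\,n}$. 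For each $R > r$, \cref{lem:exist_separating} applied to the compact set $V_2 \cup \bar{B}_R(p)$ furnishes further properly separating hypersurfaces $\Sigma_-^R \prec \Sigma_1$ and $\Sigma_+^R \succ \Sigma_2$ lying outside $\bar{B}_R(p)$. The compact sub-band $X_R \subset M$ cobounded by $\Sigma_\pm^R$ and partitioned by $\Sigma_1, \Sigma_2$ inherits \cref{pB} from $M$, since any hypersurface separating $\partial_- X_R$ from $\partial_+ X_R$ inside $X_R$ still separates the ends of $M$. \cref{thm:analysis} then yields
\[
  \min\{\wid(V_1^R, g),\, \wid(V_3^R, g)\} < \ell,
\]
where $\ell$ depends only on $\kappa$ and $\wid(V_2, g)$ and is \emph{independent of $R$}. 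But $\wid(V_i^R, g) \geq R - r \to \infty$ as $R \to \infty$ for $i = 1, 3$, a contradiction. Hence $\Sc_g \equiv 0$ on $M$.

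\textbf{Step 2: $\mu$-bubble rigidity.}
To obtain the splitting, fix a compact sub-band $X_0 \subset M$ (inheriting \cref{pB} as above) and minimize a suitable $\mu$-bubble functional among properly separating hypersurfaces of $X_0$, as in \cite{Rae21}; for $n \leq 7$ one obtains a smooth stable minimizer $\Sigma^* \subset X_0$. Combining the second variation inequality of the $\mu$-bubble functional with $\Sc_g \equiv 0$ and the conformal Schoen--Yau technique shows that either $\Sigma^*$ admits a metric of positive scalar curvature---contradicting \cref{pB}---or one is in the borderline case: $\Sigma^*$ is totally geodesic in $(M, g)$, the induced metric on $\Sigma^*$ is scalar-flat, and $\mathrm{Ric}_g(\nu, \nu) = 0$ along its unit normal $\nu$. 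Since $\Sigma^*$ carries no psc metric, Bourguignon's sharpening of the Kazdan--Warner trichotomy further upgrades the scalar-flat induced metric to a Ricci-flat one.

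\textbf{Step 3: global splitting.}
A standard Jacobi field computation---using that $\Sigma^*$ is totally geodesic and $\mathrm{Ric}_g(\nu, \nu) \equiv 0$ along it---shows that a tubular neighborhood of $\Sigma^*$ splits isometrically as $\Sigma^* \times (-\delta, \delta)$. Iterating this along normal geodesics (each parallel slice remaining a totally geodesic scalar-flat properly separating hypersurface) and invoking completeness of $g$, the splitting extends to all of $M$, yielding $(M, g) \cong (\Sigma^* \times \R,\, g_{\Sigma^*} + \D{t}^2)$ with $Y \coloneqq \Sigma^*$ closed Ricci flat; alternatively, once this iteration forces $\mathrm{Ric}_g \equiv 0$ globally on $M$, the Cheeger--Gromoll splitting theorem applied to any line joining the two ends delivers the same conclusion. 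I expect the main difficulty to lie in Step~2, namely carrying out the $\mu$-bubble stability analysis and controlling the equality case rigorously enough under \cref{pB} to extract both the total geodesicity of $\Sigma^*$ and the Ricci-flatness of its induced metric, on which Step~3 rests.
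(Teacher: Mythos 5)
Your proposal takes a genuinely different route from the paper, but both of the nontrivial steps have gaps that, in the paper, are sidestepped by one citation.

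The paper's proof is two lines longer than you might expect: it does \emph{not} first prove $\Sc_g \equiv 0$ and then work up to Ricci-flatness by bubble rigidity. Instead it invokes Kazdan's deformation theorem \cite[Theorem~B]{Ka82}: a complete metric of nonnegative scalar curvature that is \emph{not} Ricci flat can be deformed to a complete metric of (strictly) positive scalar curvature. This reduces the whole rigidity question to the nonexistence of a complete psc metric on $M$, which is then ruled out exactly as in your Step~1 (construct a long partitioned band around a properly separating hypersurface and apply \cref{thm:analysis} or \cref{thm:analysis1}). After that, Cheeger--Gromoll gives the splitting, as in your alternative ending of Step~3.

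Your Step~1 as written has a gap. You want $V_2 \subset \bar B_r(p)$ so that $\Sc_g \geq \kappa n(n-1)$ on $V_2$. But a properly separating hypersurface has to separate the ends of $M$ and will in general have to leave any given ball $\bar B_r(p)$ -- think of $M = Y \times \R$ with $\operatorname{diam}(Y)$ large: every separating hypersurface has diameter at least $\operatorname{diam}(Y)$, regardless of how you shift the Urysohn function. So you cannot guarantee $V_2$, let alone that $\Sc_g$ is uniformly positive on all of $V_2$. In the paper this issue does not arise because, after Kazdan, the replacement metric $\hat g$ is uniformly positive on \emph{any} compact neighborhood of \emph{any} separating hypersurface. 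A direct ``$\Sc_g > 0$ somewhere $\Rightarrow$ contradiction'' argument along your lines does not go through without some way of localizing a separating hypersurface near the point $p$.

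Your Step~2 also has a gap. With $\Sc_g \equiv 0$ the only way to get the $\mu$-bubble existence from \cref{prop:bubbles} (or \cite[Lemma~4.2]{Rae21}) is to find a nonzero potential $h$ satisfying \eqref{eq:ConformalLaplacianCondition} and the barrier condition \eqref{eq:BarrierCondition}; if you set $h \equiv 0$ and minimize area, the barrier fails and the minimizing current may collapse onto $\partial_\pm X_0$, so you have no interior hypersurface to analyze. Even granting an interior stable minimizer, the claimed dichotomy (psc on $\Sigma^*$ vs.\ totally geodesic, $\Sc_{\Sigma^*} \equiv 0$, $\mathrm{Ric}_g(\nu,\nu) \equiv 0$) needs the full equality analysis in the stability inequality, which you assert but do not carry out; that analysis is nontrivial precisely because you must rule out the case where the conformal Laplacian has nonnegative but not strictly positive principal eigenvalue. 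The Bourguignon/Kazdan upgrade from scalar-flat to Ricci-flat is fine once you have a scalar-flat metric on $\Sigma^*$, but that is a consequence, not a replacement, for the missing equality case analysis. All of this is precisely what Kazdan's theorem packages for you in the complete setting, which is why the paper cites it rather than re-deriving it via $\mu$-bubbles.

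A small additional remark: when you later invoke Cheeger--Gromoll, you should also record why the cross-section $Y$ is compact. The paper notes this follows from $M$ having more than one end, which is built into the definition of an open band.
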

\begin{proof}
If $(M,g)$ is not Ricci flat, then $M$ admits a complete metric $\hat{g}$ of positive scalar curvature by \cite[{Theorem B}]{Ka82}.
Let $\Sigma\subset M$ be a properly separating hypersurface which exists by \cref{lem:exist_separating}.
There is a $\kappa>0$ such that $\Sc(M,\hat{g})\geq\kappa n(n-1)$ in a neighborhood of \(\Sigma\) with width $d<\frac{2\pi}{\sqrt{\kappa}n}$, which is bounded by two properly separating hypersurfaces $\Sigma_1$ and $\Sigma_2$ such that \(\Sigma_1 \prec \Sigma \prec \Sigma_2\).
For every $C>0$, we can use \cref{lem:exist_separating} to find further properly separating hypersurfaces $\Sigma_\pm^C$ such that  \(\Sigma_-^C \prec \Sigma_1 \prec \Sigma_2 \prec \Sigma_+^C\) and \(\dist_{\hat{g}}(\Sigma_-^C,\Sigma_1) \geq C \leq \dist_{\hat{g}}(\Sigma_2,\Sigma_+^C)\).
Let $X^C$ be the compact band bounded by $\Sigma_\pm^C$.
Then for $C$ large enough, \cref{thm:analysis} or \cref{thm:analysis1} applied to $X^C$ yields a contradiction.
Hence $(X,g)$ is Ricci flat.

Since \(M\) is an open band it is disconnected at infinity and so admits a geodesic line (see e.g.~\cite[Lemma~7.3.1]{Petersen}).
Thus by the Cheeger--Gromoll splitting theorem (see e.g.~\cite[Theorem~7.3.5]{Petersen}), $(M,g)$ is isometric to $(Y\times\R,g_Y+dt^2)$, where $(Y,g_Y)$ is a Ricci flat manifold.
Furthermore, \(M\) has more than one end by the definition of an open band, and so \(Y\) must be compact. 
\end{proof}

Using a similar (and somewhat simpler argument), we also obtain the following statement:
\begin{prop}\label{prop:no_upsc_outside_compact}
  Let \(n \leq 7\) and let \(M\) be an open $n$-dimensional band with \cref{pB}. Then \(M\) does not admit a complete metric which has uniformly positive scalar curvature outside a compact subset.
\end{prop}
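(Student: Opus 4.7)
The plan is to argue by contradiction, reducing to the band width inequality for compact bands with \cref{pB}. Concretely, suppose $g$ is a complete metric on $M$ and there exist a compact $K \subset M$ and $\kappa > 0$ with $\Sc(g) \geq \kappa n(n-1)$ on $M \setminus K$. I will carve out a compact sub-band $X \subset M \setminus K$ whose width exceeds $2\pi/(n\sqrt{\kappa})$. Since $X$ inherits \cref{pB} from $M$ and carries scalar curvature at least $\kappa n(n-1)$, this will contradict the known upper bound on $\wid(X,g)$ for orientable $n$-dimensional bands with \cref{pB} when $n \leq 7$, which is a special case of \cref{thm:general} applied with $k = 0$ to a standard warped-product model (e.g.\ over a flat torus with $\varphi(t) = \cos(\sqrt{\kappa} n t/2)$).

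To construct $X$, I would first apply \cref{lem:exist_separating} to obtain a properly separating hypersurface $\Sigma_1 \subset M$ that separates $K$ from $\mathcal{E}_+ M$; in particular $\Sigma_1$ is disjoint from $K$, and $K$ sits on the $\mathcal{E}_- M$-side of $\Sigma_1$. Next, fix any $R > 2\pi/(n\sqrt{\kappa})$. Completeness of $g$ together with Hopf--Rinow implies that the closed $g$-neighborhood of radius $R$ around the compact set $K \cup \Sigma_1$ is again compact. A second application of \cref{lem:exist_separating} to this neighborhood produces a properly separating hypersurface $\Sigma_2$ with $\Sigma_1 \prec \Sigma_2$ and $d_g(\Sigma_1, \Sigma_2) \geq R$. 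Let $X \subset M$ be the compact band bounded by $\Sigma_1$ and $\Sigma_2$. Since $X$ lies on the $\mathcal{E}_+ M$-side of $\Sigma_1$, it is disjoint from $K$, so $\Sc(X,g) \geq \kappa n(n-1)$; moreover, by the remark at the end of \cref{sec:Bands}, $X$ inherits \cref{pB} from $M$.

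The only subtle step is the topological bookkeeping for the two invocations of \cref{lem:exist_separating}: one needs to verify that $\Sigma_1 \prec \Sigma_2$ and that the compact band they bound is disjoint from $K$. Both are immediate from the chosen separation properties of $\Sigma_1$ and $\Sigma_2$ relative to $\mathcal{E}_\pm M$. Once this is in place, the band width inequality forces $\wid(X,g) \leq 2\pi/(n\sqrt{\kappa}) < R \leq d_g(\Sigma_1,\Sigma_2) = \wid(X,g)$, the desired contradiction. This argument is strictly simpler than that of \cref{thm:analysis2}, because uniform positive scalar curvature outside a compact set directly supplies the positive lower bound on a band arbitrarily far out, without any need for the Kazdan deformation or the partitioned three-segment argument from \cref{thm:analysis}.
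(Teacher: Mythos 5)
Your proof is correct and follows essentially the same strategy as the paper: assume a complete metric with $\Sc \geq \kappa n(n-1)$ off a compact set $K$, apply \cref{lem:exist_separating} twice to produce a compact band $X \subset M\setminus K$ of width exceeding $\frac{2\pi}{\sqrt{\kappa}n}$, and contradict the band width estimate (the paper cites \cref{thm:analysis1} or the classical estimate, you invoke the $k=0$ case of \cref{thm:general}, which is the same thing). Your write-up spells out the Hopf--Rinow step and the bookkeeping for $\Sigma_1 \prec \Sigma_2$ a bit more explicitly than the paper does, but the argument is the same one.
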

\begin{proof}
  Assume, by contradiction, that \(M\) admits a complete metric \(g\) which has scalar curvature \(\scal_g \geq \kappa n(n-1) > 0\) on \(M \setminus K\) for some compact subset \(K \subset M\) and some \(\kappa > 0\).
  Then, applying \cref{lem:exist_separating} twice, we can find a compact band \(X \subset M \setminus K\) which is bounded by two properly separating hypersurfaces \(\partial_- X \prec \partial_+ X \subset M\) such that \(\width(X,g) > \frac{2\pi}{\sqrt{\kappa}n}\).
  By construction, we also have \(\scal_g \geq \kappa n(n-1) > 0\) on \(X\).
  But this contradicts \cref{thm:analysis1} or even the usual band width estimate on compact bands \cites[Section 3.6]{gromovFourLecturesScalar2019v6}{raede2021scalar} of dimension \(\leq 7\).
\end{proof}

In the rest of this section, we investigate topological conditions which imply the existence of open bands with \cref{pB} and use them together with \cref{thm:analysis2} to prove the main \cref{thm:main_theorem,thm:npsc_theorem}.
Let us start with the notion of an incompressible hypersurface.

\begin{defi}\label{def:incompressible}
Let $X$ be a connected manifold and let $\Sigma\subset X$ be a connected hypersurface. 
We say that $\Sigma$ is incompressible if the induced map $\iota_*:\pi_1(\Sigma)\to \pi_1(X)$ is injective.
\end{defi}

%Our first goal is to provide the most general conditions under which the existence of an incompressible hypersurface which does not admit psc shows that the manifold is covered by an open band with \cref{pB}.
We now provide topological conditions such that if an incompressible hypersurface does not admit psc, then the manifold is covered by an open band with \cref{pB}: see \cref{prop:tangential_structure}.
The arguments will be based on positive scalar curvature bordism and surgery techniques which usually require a case distinction depending on the presence of a spin structure.
To treat this in a unified way, we use the language of tangential structures, for details see e.g.~\cite[\S 5]{GTMW:CobordismCategory}.
We briefly recall that an \(n\)-dimensional tangential structure is given by a fibration \(\theta \colon B \to \Bfree\Orth(n)\) and a \(\theta\)-structure on an \(n\)-manifold \(M\) is a lift \(M \to B\) of the classifying map of the tangent bundle \(M \to \Bfree\Orth(n)\) along \(\theta \colon B \to \Bfree\Orth(n)\).
Then, given an \(n\)-dimensional tangential structure \(\theta \colon B \to \Bfree\Orth(n)\), one may define a notion of \(\theta\)-cobordism for \((n-1)\)-dimensional \(\theta\)-manifolds\footnote{To be precise, here one implicitly considers the \((n-1)\)-dimensional tangential structure obtained from \(\theta\) via the pullback along \(\BO(n-1) \to \BO(n)\).} and form the corresponding cobordism group which we denote by \(\Omega^{\theta}_{n-1}\).
Now the positive scalar curvature surgery principle can be phrased as follows:  Let \(\theta \colon B \to \BO(n)\) be an \(n\)-dimensional tangential structure and fix an \((n-1)\)-dimensional \(\theta\)-manifold \(N\) such that the map \(N \to B\) is \(2\)-connected. If \(N\) is \(\theta\)-cobordant to a \(\theta\)-manifold which admits a metric of psc, then \(N\) itself already admits psc, see~\cite[Theorem~1.5]{Ebert-Frenck:GLC_surgery}.

We now connect this back to the study of open bands and first observe that an open band endowed with a \(\theta\)-structure induces a well-defined \(\theta\)-cobordism class of \((n-1)\)-manifolds.
\begin{lem}\label{lem:properly_separating_cobordism_class}
  Let \(\theta \colon B \to \Bfree\Orth(n)\) be a tangential structure and let \(M\) be an open band endowed with a \(\theta\)-structure.
  Then any two properly separating hypersurfaces \(\Sigma_1, \Sigma_2 \subset M\) are \(\theta\)-cobordant.
\end{lem}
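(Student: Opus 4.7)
The plan is to first handle the nested case where one hypersurface sits inside the negative side of the other, and then use \cref{lem:exist_separating} to push both hypersurfaces past a common third one to reduce the general case to the nested one.

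First I would treat the case $\Sigma_1 \prec \Sigma_2$. By the observation recorded at the end of \cref{sec:Bands}, the hypersurfaces $\Sigma_1$ and $\Sigma_2$ bound a compact band $X \subset M$ with $\partial_- X = \Sigma_1$ and $\partial_+ X = \Sigma_2$. The $\theta$-structure on $M$ restricts to a $\theta$-structure on $X$ whose further restriction to each boundary component agrees (up to the standard conventions about inward versus outward normals) with the $\theta$-structure inherited on $\Sigma_i$ from $M$. After reversing the normal direction on one of the two boundary pieces (a purely formal step that does not alter the cobordism class), the compact band $X$ becomes a $\theta$-cobordism from $\Sigma_1$ to $\Sigma_2$, so $[\Sigma_1] = [\Sigma_2]$ in $\Omega^\theta_{n-1}$.

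For arbitrary properly separating $\Sigma_1, \Sigma_2 \subset M$, the plan is to produce a third properly separating hypersurface $\Sigma_3$ with $\Sigma_1 \prec \Sigma_3$ and $\Sigma_2 \prec \Sigma_3$. The set $K \coloneqq \Sigma_1 \cup \Sigma_2 \subset M$ is compact, so \cref{lem:exist_separating} applied to $K$ yields a properly separating $\Sigma_3$ which separates $K$ from $\E_+ M$. Unwinding \cref{def:openband}, this says that $\Sigma_3$ is disjoint from $K$ and that $K$ is contained in the union of those components of $M \setminus \Sigma_3$ that meet $\E_- M$. Hence $\Sigma_i \prec \Sigma_3$ for $i = 1,2$, and the previous paragraph furnishes $\theta$-cobordisms $\Sigma_1 \sim_\theta \Sigma_3$ and $\Sigma_2 \sim_\theta \Sigma_3$. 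Transitivity of $\theta$-cobordism then gives $[\Sigma_1] = [\Sigma_2]$.

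I do not anticipate a genuine obstacle in this proof; the only point requiring slight care is the bookkeeping of normal conventions for the $\theta$-structure at the two boundary pieces of the compact band, together with the observation that \cref{lem:exist_separating} allows us to push any compact piece of $M$ onto the negative side of a properly separating hypersurface. Both are essentially formal consequences of the setup established in \cref{sec:Bands}.
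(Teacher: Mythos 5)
Your proposal is correct and takes essentially the same route as the paper: first handle the nested case $\Sigma_1 \prec \Sigma_2$ using the compact band they cobound, then reduce the general case to the nested one by invoking \cref{lem:exist_separating} to produce a common $\Sigma_3$ with $\Sigma_1 \prec \Sigma_3$ and $\Sigma_2 \prec \Sigma_3$, and conclude by transitivity. The only difference is that you spell out the normal-orientation bookkeeping for the restricted $\theta$-structure, which the paper leaves implicit.
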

\begin{proof}
First consider the case that \(\Sigma_1 \prec \Sigma_2\).
Then the band in \(M\) bounded in between \(\Sigma_1\) and \(\Sigma_2\) is a cobordism witnessing that \(\Sigma_1\) and \(\Sigma_2\) are \(\theta\)-cobordant.
In general, \cref{lem:exist_separating} implies the existence of a properly separating hypersurface \(\Sigma' \subset M\) such that \(\Sigma_1 \prec \Sigma'\) and \(\Sigma_2 \prec \Sigma'\).
Thus, by the first case, both \(\Sigma_1\) and \(\Sigma_2\) are \(\theta\)-cobordant to \(\Sigma'\) which proves the desired statement.
\end{proof}

Next we use the psc surgery principle as stated above to derive a sufficient condition for the existence of open bands with \cref{pB}.

\begin{lem}\label{lem:tangential_structure}
  Let \(n \geq 6\) and let \(\theta \colon B \to \Bfree\Orth(n)\) be a tangential structure.
  Let \(M\) be a connected \(\theta\)-manifold without boundary and let \(Y \subset M\) be a closed connected two-sided hypersurface.
  Assume that \(Y\) does not support a psc metric and that the composition \(Y \subset M \to B\) induces an injection \(\pi_1 Y \hookrightarrow \pi_1 B\) and a surjection \(\pi_2 Y \twoheadrightarrow \pi_2 B\).
  Then the connected covering \(\hat{M}\) of \(M\) with \(\pi_1 \hat{M} = \pi_1 Y\) is an open band with \cref{pB}. 
\end{lem}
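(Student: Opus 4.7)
The plan is as follows. First, lift $Y$ to $\hat{M}$: since $\pi_1 \hat{M} = \iota_\ast(\pi_1 Y)$ by construction and $\iota_\ast$ is injective by hypothesis, the inclusion $Y \hookrightarrow M$ admits a lift to an embedding $\hat{Y} \hookrightarrow \hat{M}$ that is a diffeomorphism onto its image and induces an isomorphism $\pi_1 \hat{Y} \xrightarrow{\cong} \pi_1 \hat{M}$.

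Second, show that $\hat{Y}$ separates $\hat{M}$. The surjectivity $\pi_1 \hat{Y} \twoheadrightarrow \pi_1 \hat{M}$ implies that every loop in $\hat{M}$ is freely homotopic to a loop in $\hat{Y}$, which can be pushed off $\hat{Y}$ into a tubular neighborhood by two-sidedness. Hence the mod-$2$ intersection pairing of any loop with $\hat{Y}$ vanishes, forcing $\hat{Y}$ to separate $\hat{M}$ into two non-empty components $\hat{M}_-$ and $\hat{M}_+$.

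Third, refine the tangential structure and deploy the psc surgery principle to establish both the open-band structure and \cref{pB} in one stroke. Let $B' \to B$ be the covering corresponding to the subgroup $\iota_\ast(\pi_1 Y) \subseteq \pi_1 B$, and let $\theta' \colon B' \to \Bfree\Orth(n)$ be the pulled-back tangential structure. Because $\pi_1 \hat{M} = \iota_\ast(\pi_1 Y)$, the natural map $\hat{M} \to B$ canonically lifts to $\hat{M} \to B'$, giving $\hat{M}$ a $\theta'$-structure; the induced map $Y \to B'$ is then a $\pi_1$-isomorphism and, since coverings preserve $\pi_2$, still a $\pi_2$-surjection, i.e.\ it is $2$-connected. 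If some $\hat{M}_\pm$ were compact, it would be a $\theta'$-cobordism from $Y$ to $\emptyset$; since $\emptyset$ vacuously carries psc, the Ebert--Frenck surgery principle (applicable because $\dim(Y) = n - 1 \geq 5$ and $Y \to B'$ is $2$-connected) would produce a psc metric on $Y$, contradicting the hypothesis. Hence both $\hat{M}_\pm$ are non-compact, each contributing at least one end to $\hat{M}$, and the induced partition $\E\hat{M} = \E_- \hat{M} \sqcup \E_+ \hat{M}$ exhibits $\hat{M}$ as an open band. In the same spirit, any properly separating hypersurface $\Sigma \subset \hat{M}$ is $\theta'$-cobordant to $\hat{Y} \cong Y$ by \cref{lem:properly_separating_cobordism_class}, so if $\Sigma$ admitted psc then the surgery principle would force $Y$ to do so as well---a contradiction. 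Combining with \cref{lem:proper_separating} yields \cref{pB}.

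The main obstacle I anticipate is the careful bookkeeping of tangential structures when refining $B$ to $B'$: one must verify that the lift $\hat{M} \to B'$ exists on the nose and that the induced $\theta'$-structures on $Y$ (coming from each side of $\hat{Y}$ in $\hat{M}$, as well as from the various cobordisms between properly separating hypersurfaces) all agree, so that the Ebert--Frenck surgery principle applies uniformly both to the putative null-cobordisms $\hat{M}_\pm$ and to cobordisms between properly separating hypersurfaces.
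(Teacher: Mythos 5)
Your proposal is correct and follows essentially the same strategy as the paper's proof: lift \(Y\) to a separating hypersurface in \(\hat M\), transport the tangential structure to the covering \(B'=\hat B\) of \(B\) so that \(Y\to B'\) is \(2\)-connected, and apply the Ebert--Frenck surgery principle together with \cref{lem:properly_separating_cobordism_class} both to rule out compact complementary components and to establish \cref{pB}. The bookkeeping concern you flag at the end is handled in the paper exactly as you describe, so there is no gap.
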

\begin{proof}
  First let \(\hat{B} \to B\) be the covering with \(\pi_1 \hat{B} = \pi_1 Y\).
  Then \(\hat{M} \to M\) is the pullback of the covering \(\hat{B} \to B\) along \(\theta \colon M \to B\).
  The embedding \(Y \subset M\) lifts to an embedding \(Y \subset \hat{M}\) as a hypersurface which separates \(\hat{M}\) into two components (otherwise there existed a loop in \(\hat{M}\) intersecting \(Y\) transversally in precisely one point which would contradict \(\pi_1 Y \cong \pi_1 \hat{M}\)).
  Moreover, we obtain a \(\hat{\theta}\)-structure on \(\hat{M}\), where \(\hat{\theta} \colon \hat{B} \to B \xrightarrow{\theta} \Bfree\Orth(n)\).
  \[
    \begin{tikzcd}
    & \hat{M} \ar[d] \ar[r]& \hat{B} \ar[d] \ar[dr, "\hat{\theta}"]\\
    Y \ar[r,hook] \ar[ur,hook]&M \ar[r] & B \ar[r,"\theta"] & \Bfree\Orth(n)
    \end{tikzcd}
    \]
    By assumption, the composition \(Y \hookrightarrow \hat{M} \to \hat{B}\) is \(2\)-connected.
    Let \(U_\pm\) be the two components of \(\hat{M} \setminus Y\).
    We observe that \(U_\pm\) are both non-compact because otherwise \(Y\) would be \(\hat{\theta}\)-nullbordant and thus support a psc metric by \cite[Theorem~1.5]{Ebert-Frenck:GLC_surgery}.
    We can now define the ends of $U_-$ to be $\E_-\hat{M}$ and the ends of $U_+$ to be $\E_+\hat{M}$ in order to turn \(\hat{M}\) into an open band.
    Then, by construction, the open band \(\hat{M}\) contains \(Y\) as a properly separating hypersurface.
    
    Finally, we need to verify \cref{pB}.
    To this end, assume by contradiction that $\Sigma \subset \hat{M}$ is a separating hypersurface that admits a psc metric.
    By \cref{lem:proper_separating}, we may assume without loss of generality that \(\Sigma\) is properly separating.
    Then it follows from \cref{lem:properly_separating_cobordism_class} that \(\Sigma\) and \(Y\) are \(\hat{\theta}\)-cobordant.
    But since the map \(Y \to \hat{B}\) is \(2\)-connected and \(\Sigma\) admits a psc metric, \cite[Theorem~1.5]{Ebert-Frenck:GLC_surgery} implies that \(Y\) also admits a psc metric, a contradiction.
\end{proof}

To describe the possible concrete applications of \cref{lem:tangential_structure}, it turns out to be enough to go through the possible tangential \(2\)-types of \(M\).
Recall that for an arbitrary \(n\)-manifold, its tangential \(2\)-type is the (unique up to homotopy) tangential structure \(\theta_M\) which factors the tangent bundle as \(M \to B_M \xrightarrow{\theta_M} \Bfree\Orth(n)\), where \(M \to B_M\) is \(2\)-connected and \(\theta_M \colon B_M \to \Bfree\Orth(n)\) is \(2\)-co-connected.
In other words, \(B_M\) is the second stage of the Moore--Postnikov tower for the map \(M \to \Bfree\Orth(n)\).
Now a simple diagram chase shows that if the hypotheses of \cref{lem:tangential_structure} are satisfied for some tangential structure \(\theta\) on \(M\), then they are already satisfied for \(\theta=\theta_M\).
Moreover, the tangential \(2\)-type can be described algebraically in terms of the fundamental group \(\pi = \pi_1 M\), the first Stiefel--Whitney class \(w \colon \pi \to \Z/2\), and an extension \(\hat{\pi} \twoheadrightarrow \pi\) determined by the second Stiefel--Whitney class whose kernel has order at most \(2\), see~\cite[\S 2]{Stolz98Concordance} for details.
However, for our purposes we do not need such a full description, and we only need to distinguish two cases depending on wether \(\pi_2 B_M \cong \Z/2\) or \(\pi_2 B_M = 0\) as the following proposition demonstrates.

\begin{prop}\label{prop:tangential_structure}
   Let \(n \geq 6\).
  Let \(M\) be a connected \(n\)-dimensional manifold without boundary and  \(Y \subset M\) a closed two-sided incompressible hypersurface that does not admit a psc metric.
  Suppose that one of the following conditions holds:
  \begin{myenuma}
    \item \(M\) is almost spin. 
    \item \(Y\) is totally non-spin. 
  \end{myenuma}
  Then there exists a covering \(\hat{M} \to M\) which is an open band with \cref{pB}.
\end{prop}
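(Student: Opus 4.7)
The plan is to invoke \cref{lem:tangential_structure} with \(\theta\) chosen to be the tangential \(2\)-type \(\theta_M \colon B_M \to \Bfree\Orth(n)\) of \(M\) itself. Since \(\pi_1 M \to \pi_1 B_M\) is an isomorphism by definition, the incompressibility hypothesis immediately delivers the required injection \(\pi_1 Y \hookrightarrow \pi_1 B_M\). All the work therefore lies in verifying the surjection \(\pi_2 Y \twoheadrightarrow \pi_2 B_M\), which is where the dichotomy between the two cases will enter.

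First I would identify \(\pi_2 B_M\). By the properties of the Moore--Postnikov factorization, \(\pi_2 B_M\) is canonically the image of \(\pi_2 M \to \pi_2 \Bfree\Orth(n) \cong \Z/2\), a map given by \(w_2\)-evaluation on \(2\)-spheres. Since \(\tilde M\) is simply connected, Hurewicz identifies \(\pi_2 M = \pi_2 \tilde M\) with \(H_2(\tilde M;\Z)\); moreover, as \(H_1(\tilde M;\Z) = 0\), the Universal Coefficient Theorem makes the mod-\(2\) reduction \(H_2(\tilde M;\Z) \to H_2(\tilde M;\Z/2)\) surjective. Hence \(\pi_2 M \to \Z/2\) vanishes precisely when \(w_2(T\tilde M) = 0\), so \(\pi_2 B_M = 0\) iff \(M\) is almost spin, and \(\pi_2 B_M \cong \Z/2\) iff \(M\) is totally non-spin.

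In case~(a), \(M\) almost spin forces \(\pi_2 B_M = 0\) and the required surjection holds for free. In case~(b), my next step is to observe that \(Y\) totally non-spin forces \(M\) to be totally non-spin as well: the splitting \(TM|_Y \cong TY \oplus \underline{\R}\) (valid since \(Y\) is two-sided in the oriented \(M\)) would, if \(\tilde M\) were spin, turn any component of the preimage of \(Y\) in \(\tilde M\) into a spin cover of \(Y\), contradicting the hypothesis. Thus \(\pi_2 B_M \cong \Z/2\), and the composition \(\pi_2 Y \to \pi_2 M \to \pi_2 B_M \hookrightarrow \Z/2\) is --- again using \(TM|_Y \cong TY \oplus \underline{\R}\) --- just the \(w_2\)-evaluation of \(TY\) on \(\pi_2 Y\). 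By the same Hurewicz argument applied to \(\tilde Y\), this map is nonzero because \(Y\) is totally non-spin, hence surjective onto \(\Z/2 = \pi_2 B_M\).

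With both hypotheses of \cref{lem:tangential_structure} verified, the covering \(\hat M \to M\) with \(\pi_1 \hat M = \pi_1 Y\) is then an open band with \cref{pB}, which finishes the proof. I expect no real obstacle beyond the careful bookkeeping above; the only nonformal ingredient is the stability of total non-spinness under passage to two-sided hypersurfaces, which is a straightforward Stiefel--Whitney class computation.
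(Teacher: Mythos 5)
Your proof is correct and takes essentially the same route as the paper: both invoke \cref{lem:tangential_structure} with \(\theta=\theta_M\), note that incompressibility gives the \(\pi_1\)-injection for free, and then check the \(\pi_2\)-surjection by identifying \(\pi_2 B_M\) with the image of the \(w_2\)-evaluation map \(\pi_2 M\to\Z/2\) via Hurewicz on \(\tilde M\). The only difference is cosmetic: you spell out explicitly that \(Y\) totally non-spin forces \(M\) totally non-spin (using \(TM|_Y\cong TY\oplus\underline{\R}\)) and that the composite \(\pi_2 Y\to\pi_2 B_M\) is \(w_2(TY)\)-evaluation, whereas the paper records the first observation earlier in the text and leaves the second implicit.
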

\begin{proof}
In light of the discussion in the previous paragraph, we need to check that in either case we can apply \cref{lem:tangential_structure} for the tangential structure \(\theta = \theta_M\) given by its \(2\)-type.
\begin{myenuma}
  \item If \(M\) is almost spin, that is, the universal covering \(\tilde{M}\) of \(M\) is spin, then the map \(M \to \Bfree\Orth(n)\) classifying the tangent bundle induces the zero map \(\pi_2 M \to \pi_2 \Bfree\Orth(n) = \Z/2\) because this map can be identified with the second Stiefel--Whitney class of \(\tilde{M}\)  via the Hurewicz isomorphism \(\HZ_2(\tilde{M}) \cong \pi_2(\tilde{M}) \cong \pi_2(M)\). Thus it follows that \(\pi_2 (B_M) = 0\) and so \cref{{lem:tangential_structure}} applies to every incompressible hypersurface \(Y \subset M\) which does not admit psc.
  \item On the other hand, if \(M\) is totally non-spin, that is \(\tilde{M}\) is non-spin, then by an analogous consideration involving the second Stiefel--Whitney class of \(\tilde{M}\) we necessarily have \(\pi_2 B_M \cong \Z/2\) and \(\pi_2 M \to \pi_2 B_M = \Z/2\) is surjective.
  Now in this situation the condition on the hypersurface \(Y \subset M\) in \cref{lem:tangential_structure} is that it is incompressible, itself totally non-spin, and does not admit psc. \qedhere
\end{myenuma}
\end{proof}

We are now ready to deduce \cref{thm:main_theorem}:

\begin{proof}[Proof of \cref{thm:main_theorem}]
  By \cref{prop:tangential_structure}, there exists a covering \(\hat{M}\) which is an open band with \cref{pB}.
  Now suppose that \(M\) admits a complete metric of non-negative scalar curvature \(g\).
  Let \(\hat{g}\) denote its lift to \(\hat{M}\).
  Then \cref{thm:analysis2} implies that \((\hat{M}, \hat{g})\) must be isometric to \((N \times \R, g_N + \D{t}^2)\) for a closed Ricci flat manifold \((N,g_N)\).
\end{proof}

We now turn to \cref{thm:npsc_theorem}. 
Note that a proof of \cref{thm:npsc_theorem} has already appeared recently in \cite[Theorem~1.1]{CPSZ21}, but we give a separate argument here because it also fits directly into our topological setup.
In fact, it is simpler than \cref{thm:main_theorem} as it does not need surgery arguments and instead only relies on \cref{lem:properly_separating_cobordism_class} together with some  homological considerations:

\begin{lem}\label{lem:npsc_implies_pB}
  Let \(M\) be an oriented open band and \(Y \subseteq M\) a properly separating hypersurface together with a map \(\phi \colon M \to Y_0\) to an \(\NPSC\) manifold \(Y_0\) such that the restriction \(\phi|_{Y} \colon Y \to Y_0\) has non-zero degree.
  Then \(M\) has \cref{pB}.
\end{lem}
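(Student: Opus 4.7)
My plan is to argue by contradiction: assume that some separating hypersurface $\Sigma \subset M$ admits a metric of positive scalar curvature, and derive a contradiction from the $\NPSC$ property of $Y_0$. The strategy is to first replace $\Sigma$ by a properly separating sub-hypersurface so that \cref{lem:properly_separating_cobordism_class} can be brought to bear, and then use cobordism invariance of degree to transfer the non-vanishing of $\deg(\phi|_Y)$ to the new hypersurface.

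First I would invoke \cref{lem:proper_separating} to pass from $\Sigma$ to a union of components $\Sigma' \subseteq \Sigma$ which is properly separating; since $\Sigma'$ inherits a psc metric from $\Sigma$ by restriction, it suffices to rule out psc on $\Sigma'$. Equip $M$ with the tangential structure $\theta \colon \Bfree\SO(n) \to \Bfree\Orth(n)$ coming from its given orientation; every properly separating hypersurface in $M$ is then two-sided, hence canonically oriented. By \cref{lem:properly_separating_cobordism_class} applied with this $\theta$, the hypersurfaces $Y$ and $\Sigma'$ are oriented cobordant via a compact submanifold-with-boundary $W \subseteq M$ (either directly, when one of $Y \prec \Sigma'$ or $\Sigma' \prec Y$ holds, or else by gluing two compact bands along a common upper properly separating hypersurface obtained from \cref{lem:exist_separating}).

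Since $\phi$ is defined on all of $M$, the restriction $\phi|_W \colon W \to Y_0$ is a continuous map whose boundary, up to orientation, is $\Sigma' \sqcup (-Y)$. Cobordism invariance of the degree then yields
\[
  \deg(\phi|_{\Sigma'}) = \pm\deg(\phi|_Y) \neq 0.
\]
Here $\Sigma'$ is a (possibly disconnected) closed oriented manifold, which is allowed by \cref{defi:NPSC}. Since $Y_0$ is $\NPSC$ and $\phi|_{\Sigma'}$ has non-zero degree, $\Sigma'$ cannot admit a metric of positive scalar curvature, contradicting the assumption. This establishes \cref{pB} for $M$.

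I do not anticipate a real obstacle: all the structural work is already encoded in \cref{lem:proper_separating,lem:exist_separating,lem:properly_separating_cobordism_class}, and the only new input is the elementary cobordism invariance of degree applied to $\phi|_W$. The one place to be mindful of is handling orientations and the case where $Y$ and $\Sigma'$ are not directly comparable under $\prec$, which is why a secondary application of \cref{lem:exist_separating} is needed to produce a common upper bound for the cobordism argument.
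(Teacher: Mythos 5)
Your proof is correct and takes essentially the same route as the paper's: both reduce to a properly separating hypersurface via \cref{lem:proper_separating}, invoke \cref{lem:properly_separating_cobordism_class}, and conclude by cobordism invariance of degree together with the \(\NPSC\) property of \(Y_0\). The only stylistic difference is that the paper encodes the map to \(Y_0\) directly into the tangential structure \(B = Y_0 \times \Bfree\SO(n)\), so that \(\Omega^\theta_{n-1} \cong \Omega^{\SO}_{n-1}(Y_0)\) and the degree is read off via the transformation to \(\HZ_{n-1}(Y_0;\Z)\), whereas you use plain orientation as the tangential structure and then separately observe that \(\phi\) restricts to the (possibly glued, hence not literally embedded) cobordism \(W\) to give a bordism of maps over \(Y_0\) — two phrasings of the same argument.
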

\begin{proof}
We define a tangential structure \(M \xrightarrow{l} B \xrightarrow{\theta} \BO(n)\), where we set \(B = Y_0 \times \Bfree\SO(n)\) and \(l\) is induced by the map \(M \to Y_0\) together with the orientation of \(M\).
  Note that a \(\theta\)-structure on an \((n-1)\)-manifold \(N\) is the same as an orientation on \(N\) together with a map \(N \to Y_0\), and thus \(\Omega^\theta_{n-1} = \Omega_{n-1}^{\SO}(Y_0)\).
  In this picture, the degree of the map \(N \to Y_0\) can be read off from the transformation \(\Omega^{\SO}_{n-1}(Y_0) \to \HZ_{n-1}(Y_0; \Z) \cong \Z\).
  Suppose that \(\Sigma \subset M\) is a separating hypersurface, and assume without loss of generality that it is properly separating.
  Then, by \cref{lem:properly_separating_cobordism_class}, \(\Sigma \to Y_0\) and \(Y \to Y_0\) represent the same class of \(\Omega_{n-1}^{\SO}(Y_0)\).
  In particular, \(\deg(\Sigma \to Y_0) = \deg(Y \to Y_0) \neq 0\).
  Since \(Y_0\) is \(\NPSC\), this proves that \(\Sigma\) does not admit a psc metric and so \(M\) must have \cref{pB}.
\end{proof}

\begin{prop}\label{prop:npsc_implies_pB}
Let \(M\) be an oriented connected \(n\)-dimensional manifold and let \(\iota \colon Y \hookrightarrow M\) be a two-sided closed connected hypersurface that admits a map of non-zero degree \(\phi \colon Y \to Y_0\) to an aspherical \(\NPSC\) manifold \(Y_0\) and such that \(\ker(\pi_1 Y \xrightarrow{\iota_\ast} \pi_1 M) \subseteq \ker(\pi_1 Y \xrightarrow{\phi_\ast} \pi_1 Y_0)\).
  Then there exists a connected covering \(\hat{M} \to M\) that is an open band with \cref{pB}.
\end{prop}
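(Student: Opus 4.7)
The plan is to mirror the construction used in Lemma~\ref{lem:tangential_structure}, but to replace the role of the psc surgery principle by the degree obstruction provided by Lemma~\ref{lem:npsc_implies_pB}, together with the asphericity of $Y_0$. Concretely, I set $\Gamma \coloneqq \iota_\ast(\pi_1 Y) \leq \pi_1 M$ and let $p\colon \hat M \to M$ denote the connected covering corresponding to $\Gamma$. Since $\iota_\ast(\pi_1 Y) = \Gamma = \pi_1 \hat M$, the inclusion $\iota\colon Y \hookrightarrow M$ lifts to an embedding $\hat\iota\colon Y \hookrightarrow \hat M$ onto a submanifold $\hat Y \subset \hat M$ diffeomorphic to $Y$. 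Moreover, the hypothesis $\ker \iota_\ast \subseteq \ker \phi_\ast$ factors $\phi_\ast$ through a homomorphism $\bar\phi_\ast\colon \Gamma \to \pi_1 Y_0$; by asphericity of $Y_0$ this is realized by a continuous map $\hat\phi\colon \hat M \to Y_0$ whose restriction to $\hat Y$ is homotopic to $\phi$, and in particular $\deg(\hat\phi|_{\hat Y}) = \deg \phi \neq 0$.

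Next I would equip $\hat M$ with an open band structure in which $\hat Y$ is properly separating. An arbitrary loop in $\hat M$ represents a class in $\pi_1 \hat M = \Gamma = \iota_\ast(\pi_1 Y)$, hence is freely homotopic to a loop lying inside $\hat Y$; since $Y$ is two-sided, such loops have vanishing algebraic intersection number with $\hat Y$. A standard transversality argument then forces $\hat M \setminus \hat Y$ to split into exactly two connected components $U_-$ and $U_+$. To see that both $U_\pm$ are non-compact, suppose toward a contradiction that the closure $\overline{U_+}$ is compact: then $(\overline{U_+}, \hat\phi|_{\overline{U_+}})$ realizes an oriented nullbordism of $(\hat Y, \hat\phi|_{\hat Y})$ over $Y_0$, which is impossible since the image of $[\hat Y, \hat\phi|_{\hat Y}] = [Y, \phi]$ under the degree map $\Omega^{\SO}_{n-1}(Y_0) \to H_{n-1}(Y_0; \Z) \cong \Z$ equals $\deg \phi \neq 0$. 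Declaring $\E_\pm \hat M \coloneqq \E U_\pm$ therefore endows $\hat M$ with the structure of an oriented open band in which $\hat Y$ is a properly separating hypersurface.

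Finally, Lemma~\ref{lem:npsc_implies_pB} applied to the oriented open band $\hat M$, the properly separating hypersurface $\hat Y$, and the map $\hat\phi\colon \hat M \to Y_0$ immediately yields \cref{pB}. The most delicate step is the non-compactness of the two components $U_\pm$; this is where the non-zero degree assumption enters as a bordism-theoretic obstruction in $\Omega^{\SO}_{n-1}(Y_0)$, playing the same role in this proof as the positive-scalar-curvature surgery principle does in the analogous Lemma~\ref{lem:tangential_structure}. The construction of $\hat\phi$ and the proof that $\hat Y$ separates are by comparison routine consequences, respectively, of the asphericity of $Y_0$ and of the choice of covering $\pi_1 \hat M = \iota_\ast(\pi_1 Y)$.
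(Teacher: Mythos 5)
Your proof is correct and follows essentially the same strategy as the paper: pass to the covering $\hat M$ corresponding to $\iota_\ast(\pi_1 Y)$, use the kernel hypothesis and asphericity of $Y_0$ to extend $\phi$ to a map $\hat M \to Y_0$, argue that $\hat Y$ separates $\hat M$ into two non-compact pieces (non-compactness via the non-vanishing class in $\Omega^{\SO}_{n-1}(Y_0)$), and conclude by invoking \cref{lem:npsc_implies_pB}. Your argument spells out the separation step (vanishing intersection number because $\pi_1 Y \twoheadrightarrow \pi_1\hat M$) in slightly more detail than the paper, which simply refers back to the analogous step in \cref{lem:tangential_structure}, but there is no substantive difference.
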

\begin{proof}
  Let \(\Lambda \coloneqq \iota_\ast(\pi_1 Y) \subseteq \pi_1 M\) and let \(\hat{M} \to M\) be the covering with \(\pi_1 \hat{M} = \Lambda\).
  Then \(Y \hookrightarrow M\) lifts to an embedding \(Y \hookrightarrow \hat{M}\) which induces a surjection \(\pi_1 Y \twoheadrightarrow \Lambda\).
  By the assumption on the kernels of the induced maps on fundamental groups, it follows that the homomorphism \(\phi_\ast \colon \pi_1 Y \to \pi_1 Y_0\) factors as a composition \(\pi_1 Y \twoheadrightarrow \Lambda \to \pi_1 Y_0\).
  Since \(Y_0\) is aspherical and \(\pi_1 \hat{M} = \Lambda\), this implies that the map \(\phi \colon Y \to Y_0\) extends to a map \(\hat{M} \to Y_0\).
   As in the proof of \cref{lem:tangential_structure}, let \(U_\pm\) be the two components of \(\hat{M}\setminus Y\).
  Then \(U_\pm\) must be non-compact because otherwise \([Y \to Y_0] = 0 \in \Omega_{n-1}^{\SO}(Y_0)\) which would contradict the hypothesis \(\deg(Y \to Y_0) \neq 0\) (compare the proof of \cref{lem:npsc_implies_pB} above).
  Thus \(\hat{M}\) can be turned into an open band such that \(Y\) is a properly separating hypersurface.
  Thus the proposition follows from \cref{lem:npsc_implies_pB}.
\end{proof}

\begin{proof}[Proof of \cref{thm:npsc_theorem}]
Combine \cref{prop:npsc_implies_pB,thm:analysis2} analogously as in the proof of \cref{thm:main_theorem} above.
\end{proof}

\section{The codimension two obstruction}\label{sec:codimension_two}
In this section, we prove our codimension two obstruction results.
These are based on a reduction to a codimension one problem essentially following original ideas of \citeauthor{GromovLawson:PSCDiracComplete}~\cite[Theorem~7.5]{GromovLawson:PSCDiracComplete} and their adaptation by \citeauthor{HankePapeSchick:CodimensionTwoIndex}~\cite{HankePapeSchick:CodimensionTwoIndex}.
\begin{lem}[{cf.~\cites[Theorem~4.3]{HankePapeSchick:CodimensionTwoIndex}[Lemma~4.1.4]{Zeidler:SecondaryLargescaleIndex}}]\label{lem:extension_to_circle}
  Let \(X\) be a connected manifold without boundary.
  Let \(Y \subset X\) be a submanifold without boundary of codimension two whose normal bundle is trivial and suppose that the pair \((X,Y)\) is \(2\)-connected.
  Consider the manifold \(W \coloneqq X \setminus \mathcal{U}\) obtained by deleting a small open tubular neighborhood \(Y \times \Ball^2 \cong \mathcal{U} \subseteq X\).
  Then the map \(\partial W \cong Y \times \Sphere^1 \xrightarrow{\proj_2} \Sphere^1\) given by projection onto the second factor extends to a continuous map \(W \to \Sphere^1\).
  In particular, the map \(\pi_1 Y \times \Z = \pi_1(\partial W) \to \pi_1 W\) induced by the inclusion \(\partial W \hookrightarrow W\) is split-injective.
\end{lem}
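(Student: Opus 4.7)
The plan is to verify that the cohomological obstruction to extending \(\proj_2\) vanishes, and then to leverage the resulting extension to produce a left inverse on fundamental groups. Throughout I use that \(\Sphere^1 = K(\Z,1)\), so homotopy classes of maps into \(\Sphere^1\) are classified by \(H^1(\cdot;\Z)\) and the extension problem for a given map \(\partial W \to \Sphere^1\) is governed entirely by the corresponding class in \(H^1(\partial W;\Z)\).

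First I would identify the obstruction group \(H^2(W,\partial W;\Z)\) with \(H^2(X,Y;\Z)\). Writing \(X = W \cup_{\partial W} \mathcal{U}\), excision yields \(H^\ast(W,\partial W;\Z) \cong H^\ast(X,\mathcal{U};\Z)\), and the deformation retraction \(\mathcal{U} \simeq Y\) gives \(H^\ast(X,\mathcal{U};\Z) \cong H^\ast(X,Y;\Z)\). Since \((X,Y)\) is \(2\)-connected, a cellular approximation argument (replace \((X,Y)\) by a relative CW model with no relative cells of dimension \(\leq 2\)) shows that \(H_k(X,Y;\Z) = 0\) for \(k \leq 2\), and the universal coefficient theorem then yields \(H^2(X,Y;\Z) = 0\). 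Consequently the class \(\alpha \in H^1(\partial W;\Z)\) classifying \(\proj_2\) lies in the image of the restriction \(H^1(W;\Z) \to H^1(\partial W;\Z)\); taking a representing map produces \(f' \colon W \to \Sphere^1\) whose restriction to \(\partial W\) is cohomologous to \(\proj_2\).

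Next I would upgrade the cohomological extension to a genuine one. Since \(\Sphere^1\) is an Eilenberg--MacLane space, the two maps \(f'|_{\partial W}\) and \(\proj_2\) represent the same cohomology class and are therefore homotopic. The inclusion \(\partial W \hookrightarrow W\) is a cofibration, so by the homotopy extension property \(f'\) can be modified to a map \(f \colon W \to \Sphere^1\) whose restriction to \(\partial W\) equals \(\proj_2\) on the nose. This proves the first claim.

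Finally, for the split-injectivity I would construct an explicit retraction. The \(2\)-connectedness of \((X,Y)\) implies that the inclusion induces an isomorphism \(\pi_1 Y \xrightarrow{\cong} \pi_1 X\), so postcomposing \(\pi_1 W \to \pi_1 X\) with its inverse yields a homomorphism \(q \colon \pi_1 W \to \pi_1 Y\). Together with \(f_\ast \colon \pi_1 W \to \pi_1 \Sphere^1 = \Z\) this assembles into \((q,f_\ast) \colon \pi_1 W \to \pi_1 Y \times \Z\). The composition with \(\pi_1(\partial W) = \pi_1 Y \times \Z \to \pi_1 W\) sends the \(\Z\)-factor to itself by the identity because \(f\) extends \(\proj_2\), and it sends the \(\pi_1 Y\)-factor to itself by the identity because the inclusion \(Y \hookrightarrow \partial W \hookrightarrow W \hookrightarrow X\) factors the isomorphism \(\pi_1 Y \cong \pi_1 X\). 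The only cross term to check is that the \(\Z\)-factor of \(\pi_1(\partial W)\) lies in \(\ker(q)\): this holds because the fiber circle \(\{y\}\times\Sphere^1\) bounds the normal disk in \(\mathcal{U}\subset X\) and hence is null-homotopic in \(X\). The hardest part is really just arranging the excision and vanishing that force \(H^2(W,\partial W;\Z) = 0\); note that this approach avoids Poincaré--Lefschetz duality, which is essential since \(X\) is neither assumed orientable nor compact.
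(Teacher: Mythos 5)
Your proposal is correct and takes essentially the same approach as the paper: both exploit that the $2$-connectedness of $(X,Y)$, transported by excision to $(W,\partial W)$, yields the low-degree vanishing of the relative (co)homology needed to extend $\proj_2$ (you phrase this dually via $H^2(W,\partial W;\Z)=0$ and the obstruction to extending a map into $K(\Z,1)$; the paper uses the isomorphism $H_1(\partial W)\cong H_1(W)$ together with the factorization of $(\proj_2)_\ast$ through the Hurewicz homomorphism). The retraction built from the extension $W\to \Sphere^1$ together with $\pi_1 W\to\pi_1 X\cong\pi_1 Y$, including the verification that the normal circle dies in $\pi_1 X$, is the same as in the paper.
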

\begin{proof}
  It suffices to show that the induced homomorphism \((\proj_2)_\ast \colon \pi_1 (\partial W) \to \pi_1(\Sphere^1) = \Z\) extends to a homomorphism \(\pi_1(W) \to \Z\).
  The hypotheses imply that the pair \((X, \mathcal{U})\) is also \(2\)-connected and so excision and the Hurewicz theorem show that \( \HZ_k(W, \partial W) \cong \HZ_k(X, \mathcal{U}) = 0\) for \(0 \leq k \leq 2\).
  In particular, the map \(\HZ_1(\partial W) \xrightarrow{\cong} \HZ_1(W)\) induced by the inclusion \(\partial W \hookrightarrow W\) is an isomorphism.
  The existence of the desired extension now follows from the following diagram because the map \((\proj_2)_\ast \colon \pi_1(\partial W) \to \pi_1(\Sphere^1) = \Z\) factors through the Hurewicz homomorphism.
  \[
  \begin{tikzcd}
    \pi_1(\partial W) \rar["\mathrm{hur}"] \dar & \HZ_1(\partial W) \dar["\cong"] \rar["(\proj_2)_\ast"] & \HZ_1(\Sphere^1) \rar[equal,"\mathrm{hur}"] & \pi_1(\Sphere^1) = \Z\\
    \pi_1(W) \rar["\mathrm{hur}"] &\HZ_1(W) \urar[dashed]
  \end{tikzcd}
  \]
  
  This also implies that the map \(\pi_1 Y \times \Z = \pi_1(\partial W) \to \pi_1 W\) is split injective because a retraction can be constructed using the map \(\pi_1 W \to \Z\) from the previous paragraph together with the map \(\pi_1 W \to \pi_1 X \cong \pi_1 Y\) induced by the inclusion \(W \hookrightarrow X\).
\end{proof}

\begin{prop}\label{prop:codim2_pB}
  Let \(n \geq 6\).
  Let \(X\) be a connected \(n\)-dimensional manifold and let \(Y \subset X\) be a closed connected submanifold of codimension two with trivial normal bundle such that the pair \((X,Y)\) is \(2\)-connected.
  Consider the manifold \(W \coloneqq X \setminus \mathcal{U}\) obtained by deleting a small open tubular neighborhood \(Y \times \Ball^2 \cong \mathcal{U} \subseteq X\).
  If \(Y \times \Sphere^1\) does not admit a metric of positive scalar curvature, then the double \(\double{W}\)of \(W\) is an open band with \cref{pB}.
\end{prop}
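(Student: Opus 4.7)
The plan is to first verify the band structure directly and then to establish \cref{pB} by combining \cref{lem:extension_to_circle} with the Gromov--Lawson surgery scheme used in \cref{sec:obstr_bands}. For the band structure, since $\partial W = Y \times \Sphere^1$ is compact (and $W$, hence $X$, is non-compact in the relevant case), the end space of $\double{W}$ is the disjoint union of the end spaces of its two copies of $W$; declaring these to be $\E_\pm \double{W}$ turns $\double{W}$ into an open band in which $\partial W$ is a properly separating hypersurface.

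For \cref{pB}, let $\Sigma \subset \double{W}$ be an arbitrary separating hypersurface, which I may take to be properly separating by \cref{lem:proper_separating}. I plan to produce an $n$-dimensional tangential structure $\theta \colon B \to \Bfree\Orth(n)$ refining the tangential $2$-type of $Y \times \Sphere^1$, together with a $\theta$-structure on $\double{W}$ whose restriction to $\partial W$ is the canonical $2$-connected classifying map. The key ingredient is \cref{lem:extension_to_circle}: on each copy of $W$ it supplies a map $\rho \colon W \to \Sphere^1$ extending the projection $Y \times \Sphere^1 \to \Sphere^1$, and via the split injection $\pi_1(Y \times \Sphere^1) \hookrightarrow \pi_1 W$ it also extends the classifying map $\partial W \to \Bfree\pi_1(Y) \times \Sphere^1$ across $W$. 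Gluing these data along $\partial W$, and combining with the orientation (and in the almost-spin case the spin structure inherited from $X$), will yield the required $\theta$-structure on $\double{W}$.

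With the $\theta$-structure in place, \cref{lem:properly_separating_cobordism_class} gives $[\Sigma]=[Y \times \Sphere^1]$ in $\Omega^\theta_{n-1}$. If $\Sigma$ admitted psc, then applying the Gromov--Lawson surgery principle recalled just before \cref{lem:properly_separating_cobordism_class} (which is applicable because $Y \times \Sphere^1 \to B$ is $2$-connected by construction) would produce a psc metric on $Y \times \Sphere^1$, contradicting the hypothesis. The main obstacle will be verifying the compatibility of the $2$-type across the doubling at the $\pi_2$-level; here the $2$-connectedness of $(X,Y)$ together with the trivial normal bundle of $Y$ ensures that $X$ is almost spin if and only if $Y$ is (since $w_2(X)$ restricts to $w_2(Y)$ and $\pi_2 Y \twoheadrightarrow \pi_2 X$), so that $\double{W}$ naturally carries the required almost-spin or totally non-spin tangential structure matching the one prescribed on $\partial W$.
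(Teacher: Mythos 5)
Your proposal follows essentially the same route as the paper: use \cref{lem:extension_to_circle} to extend the circle projection across \(W\), equip \(\double{W}\) with a tangential structure of the form \(B_X \times \Sphere^1\) whose restriction to \(\partial W \cong Y \times \Sphere^1\) is \(2\)-connected, and then invoke \(\theta\)-cobordism invariance of psc via the Gromov--Lawson surgery principle. The paper packages the last step by citing \cref{lem:tangential_structure} (for the trivial covering), whereas you re-derive that lemma's content inline with \cref{lem:properly_separating_cobordism_class}; this is a stylistic difference, not a different argument. Two small points deserve attention.

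First, you declare the band structure by noting that ``\(W\), hence \(X\), is non-compact in the relevant case'' -- but the proposition as stated does not assume \(X\) non-compact, so this is not hypothesized, it must be \emph{proved}. If \(W\) were compact then \(\double{W}\) would be closed and could not be an open band. The correct argument is exactly the one buried inside \cref{lem:tangential_structure}: a compact \(W\) would constitute a \(\theta\)-nullbordism of \(Y \times \Sphere^1\), and since \(Y \times \Sphere^1 \to B\) is \(2\)-connected, \cite[Theorem~1.5]{Ebert-Frenck:GLC_surgery} would put a psc metric on \(Y \times \Sphere^1\), contradiction. You should supply this step rather than assert it.

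Second, your description of the tangential structure (``\(\partial W \to \Bfree\pi_1(Y) \times \Sphere^1\) \dots combining with the orientation and, in the almost-spin case, the spin structure'') is a roundabout algebraic reconstruction of what the paper does cleanly in one line: take \(B = B_X \times \Sphere^1\), with \(l = (l'|_W, p)\) where \(l'\) is the restriction of the tangential \(2\)-type of \(X\) and \(p\) is from \cref{lem:extension_to_circle}, and extend to \(\double{W}\) by reflection. You should also note explicitly why \(Y \times \Sphere^1 \to B_X \times \Sphere^1\) is \(2\)-connected: it is homotopic to \(l_Y \times \id_{\Sphere^1}\), and \(l_Y\) is \(2\)-connected because \(Y \hookrightarrow X\) is (this is where the \(2\)-connectedness of the pair \((X,Y)\) enters, not merely the \(w_2\)-comparison you sketch).
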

\begin{proof}
  We start with considering the tangential \(2\)-type \(X \to B_X \to \Bfree\Orth(n)\) of \(X\).
  By restriction to \(W \subset X\) this induces a tangential structure \(l' \colon W \to B_X\).
  Let \(p \colon W \to \Sphere^1\) be a map extending the projection \(\partial W \cong Y \times \Sphere^1 \to \Sphere^1\) which exists by \cref{lem:extension_to_circle}.
  We obtain a new tangential structure \(W \xrightarrow{l} B \coloneqq (B_X  \times \Sphere^1) \xrightarrow{\theta} \Bfree\Orth(n)\), where \(l = (l', p)\), and \(\theta\) is defined as projection to \(B_X\) followed by \(B_X \to \Bfree\Orth(n)\).
  Furthermore, this tangential structure \(l\) extends to the double \(\double{W}\) by reflection.
  Its restriction to the hypersurface \(\double{W} \supset \partial W \cong Y \times \Sphere^1\) is a \(2\)-connected map \(Y \times \Sphere^1  \to B = B_X \times \Sphere^1\) because by construction it is homotopic to \(l_{Y} \times \id_{\Sphere^1}\), where \(l_Y\) denotes the restriction of \(X \to B_X\) to \(Y \subset X\) and \(l_Y\) is \(2\)-connected.
  Thus \cref{lem:tangential_structure} (applied to the trivial covering) proves the desired conclusion. 
\end{proof}

\begin{proof}[Proof of \cref{thm:main_theorem_codim2}]
  Note that since \(Y\) does not admit positive scalar curvature and \(\dim(Y) = 5\), \cref{thm:main_theorem} implies that \(Y \times \Sphere^1\) does not admit positive scalar curvature either.
  Then we let \(X\) be the connected covering of \(M\) with \(\pi_1 X = \pi_1 Y\).
  It follows \(Y \hookrightarrow X\) and the pair \((X,Y)\) satisfies the hypotheses of \cref{prop:codim2_pB}.
  Thus, if we let \(W\) be as in the statement of \cref{prop:codim2_pB}, then its double \(\double{W}\) is an open band with \cref{pB}.
  Now assume by contradiction that \(M\) admits a complete metric of uniformly positive scalar curvature.
  Then by first lifting it to \(X\) and restricting it to \(W\), we obtain a complete metric of uniformly positive scalar curvature on \(W\).
  After changing this metric in a compact neighborhood of \(\partial W\), we obtain a smooth complete metric on \(\double W\) which has uniformly positive scalar curvature outside a compact subset, a contradiction to \cref{prop:no_upsc_outside_compact}.
\end{proof}

\begin{proof}[Proof of \cref{thm:npsc_theorem_codim2}]
 We first verify that \(Y_0 \times \Sphere^1\) is also \(\NPSC\).
 To this end, let \(N\) be an oriented \((n-1)\)-manifold and \(\phi \colon N \to Y_0 \times \Sphere^1\) a map of non-zero degree, where we assume \(N\) to be connected without loss of generality.
 Then let \(Z = \phi^{-1}(Y_0 \times \{\ast\})\) be a transversal pre-image.
 It follows that \(\iota \colon Z \hookrightarrow N\) and \(\phi \colon Z \to Y_0 \times \{\ast\} = Y_0\) satisfies the hypotheses of \cref{thm:npsc_theorem} and so \(N\) does not admit a psc metric.
 This proves that \(Y_0 \times \Sphere^1\) is \(\NPSC\).
 
 To prove the theorem, we again consider the connected covering of \(X \to M\) with \(\pi_1 X = \pi_1 Y\), and we let \(W\) be as in the statement of \cref{prop:codim2_pB}.
 By \cref{lem:extension_to_circle}, the map \(\pi_1 Y \times \Z = \pi_1(\partial W) \to \pi_1 W\) induced by the inclusion \(Y \times \Sphere^1 = \partial W \hookrightarrow W\) is injective and admits a retraction \(r \colon \pi_1 W \twoheadrightarrow \pi_1 Y \times \Z\).
 Since \(Y_0 \times \Sphere^1\) is aspherical, this implies that the map \(Y \times \Sphere^1 \to Y_0 \times \Sphere^1\) extends to a map \(W \to Y_0 \times \Sphere^1\) and subsequently to a map \(\double W \to Y_0 \times \Sphere^1\) on the double.
 In summary, \(\double W\) is an open band that contains \(Y \times \Sphere^1\) as a properly separating hypersurface and it satisfies the hypotheses of \cref{lem:npsc_implies_pB} because \(Y_0 \times \Sphere^1\) is \(\NPSC\).
 Thus \(\double W\) has \cref{pB} and the theorem now follows again from \cref{prop:no_upsc_outside_compact} as in the proof of \cref{thm:main_theorem_codim2} above.
\end{proof}

\section{Proof of the Partitioned Comparison Principle}\label{sec:proof_part_comparison}

We will prove Theorem \ref{thm:general} by contradiction.
We stress that all bands considered in this section are compact.
Under the assumption that $\wid(V_j,g)>\wid(M_j,g_{\varphi_j})$ for all $j\in\{1,\ldots, k+1\}$, we will produce a closed embedded hypersurface $\Sigma\subset X$ which separates $\p_-X$ and $\p_+X$ and admits a metric of positive scalar curvature.

This hypersurface $\Sigma$ will appear as the boundary of a $\mu$-bubble.
The key ingredient for the corresponding functional is the potential function $h \colon X\to\R$.
We use the ideas from \cite[Section 3]{Rae21} for each band $(V_j,g)$ and model space $(M_j,g_{\varphi_j})$ separately to produce $h_j \colon V_j\to\R$ as the concatenation of a strictly 1-Lipschitz band map $(V_j,g)\to(M_j,g_{\varphi_j})$ and the function $h_{\varphi_j}:M_j\to \R$. 
Subsequently, we use a gluing construction to paste all of the $h_j$ together to obtain a smooth function $h\colon X\to\R$ which is suitable for our purposes.

The idea to combine potential functions in this way, was already used in \cites{CeZeiPMT,ZeidlerWidthLargeness}.
The gluing construction is based on the following result:

\begin{lem}\label{lem:potential}
Let $h\colon [a,b]\to\R$ be a strictly monotonously decreasing smooth function such that
\[-\frac{n}{n-1}h^2-2h'=\sigma,\]
for some constant $\sigma\in\R$. 
Then, for every sufficiently small $\eps>0$, there exists a function $\hat{h} \colon [a-\eps,b+\eps]\to\R$ such that:
\begin{itemize}
\item[$\triangleright$] $\hat{h}(t)=h(t)$ for $t\in[a+\eps,b-\eps]$, 
\item[$\triangleright$] $\hat{h}(t)=h(a)$ in a neighborhood of $a-\eps$ and $\hat{h}(t)=h(b)$ in a neighborhood of $b+\eps$,
\item[$\triangleright$] $\hat{h}'\leq0$,
\item[$\triangleright$] $-\frac{n}{n-1}\hat{h}^2-2\hat{h}'\leq\sigma$ and $-\frac{n}{n-1}\hat{h}^2(t)-2\hat{h}'(t)<\sigma$ if $\hat{h}'(t)=0$.
\end{itemize}
\end{lem}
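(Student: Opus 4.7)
The key observation is that strict monotonicity of $h$ gives $h'(a), h'(b) < 0$, and the defining ODE then yields the strict slack inequalities
\[
-\tfrac{n}{n-1}h(a)^2 = \sigma + 2h'(a) < \sigma, \qquad -\tfrac{n}{n-1}h(b)^2 = \sigma + 2h'(b) < \sigma.
\]
This slack on the would-be constant ends of $\hat h$ is what will absorb the error introduced by a smoothed transition.

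Fix $0 < \tau \ll \eps$ to be chosen at the end. Set $\hat h = h(a)$ on $[a-\eps, a-\tau]$, $\hat h = h$ on $[a+\tau, b-\tau]$, and $\hat h = h(b)$ on $[b+\tau, b+\eps]$; the two transition intervals $[a-\tau, a+\tau]$ and $[b-\tau, b+\tau]$ are handled symmetrically, so we describe only the left one. Extend $h$ as a solution of its ODE to a slightly larger interval $(a-\delta, b]$ with $\delta > \tau$, obtaining $\tilde h$, and define $\hat h(t) \coloneqq h(a) + \int_{a-\tau}^t \beta(s)\,\D s$ on $[a-\tau, a+\tau]$, where $\beta\colon [a-\tau, a+\tau] \to [\tilde h', 0]$ is a smooth function that vanishes identically together with all derivatives in a left neighborhood of $a-\tau$, agrees with $\tilde h'$ together with all derivatives in a right neighborhood of $a+\tau$, and whose integral over $[a-\tau, a+\tau]$ equals $h(a+\tau) - h(a)$. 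Standard bump-function interpolation produces such $\beta$: the required mean value is approximately $h'(a)/2$, which lies strictly inside the allowed pointwise range $[\tilde h'(a), 0]$, and the remaining conditions at the two endpoints involve only higher order behavior.

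By construction, the first three conclusions of the lemma hold. For the differential inequality, introduce $\delta_h \coloneqq \hat h - \tilde h$ on the transition; using the ODE for $\tilde h$, the desired bound $-\tfrac{n}{n-1}\hat h^2 - 2\hat h' \leq \sigma$ rearranges to
\[
2\delta_h'(t) + \tfrac{n}{n-1}\bigl(\hat h + \tilde h\bigr)(t)\,\delta_h(t) \;\geq\; 0.
\]
Here $\delta_h' = \beta - \tilde h' \geq 0$ and $\delta_h(a+\tau) = 0$, hence $\delta_h \in [\tau h'(a) + O(\tau^2), 0]$ and $|\delta_h| = O(\tau)$. On the bulk of the transition, $\delta_h'$ is bounded below by a positive constant of order $|h'(a)|$ (its mean is $|h'(a)|/2 + O(\tau)$), which dominates the $O(\tau)$-sized other term for $\tau$ small. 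In a right-neighborhood of $a+\tau$, where both $\delta_h'$ and $\delta_h$ vanish to infinite order, a local expansion $\delta_h'(t) \sim C(a+\tau-t)^k$ and $\delta_h(t) \sim -\tfrac{C}{k+1}(a+\tau-t)^{k+1}$ (for some $k \geq 1$, $C > 0$) shows that $2\delta_h'$ dominates $\tfrac{n}{n-1}|\hat h+\tilde h|\,|\delta_h|$ by a factor of $(a+\tau - t)^{-1}$. On the middle segment $[a+\tau, b-\tau]$ the ODE for $h$ gives equality, and on the constant ends the quantity equals $-\tfrac{n}{n-1}h(a)^2$ or $-\tfrac{n}{n-1}h(b)^2$, strictly less than $\sigma$. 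Since $\beta$ can moreover be arranged to be strictly negative on the open interior of the transition, $\hat h' = 0$ only on the two constant ends, where the strict inequality has just been observed.

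The main subtlety is reconciling the two regimes on the transition---the uniform slack in the bulk and the asymptotic cancellation near the join with $h$; once $\beta$ has the described properties and $\tau$ is small enough, both regimes are under control and all the conclusions follow.
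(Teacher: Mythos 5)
Your construction --- defining $\hat h$ on a transition interval by integrating a smoothed-off derivative $\beta$ and checking the resulting inequality through a differential estimate for $\delta_h = \hat h - \tilde h$ --- differs fundamentally from the paper's. The paper instead sets $\hat h = h \circ \rho$ for a smooth $\rho \colon \R \to [a,b]$ with $\rho = \mathrm{id}$ on $[a+\eps/2,\,b-\eps/2]$, $\rho \equiv a$ (resp.\ $b$) outside $[a-\eps/2,\,b+\eps/2]$, and $0 \leq \rho' \leq 1$ with $\rho'<1$ on the transitions. The chain rule then yields the single identity
\[
-\tfrac{n}{n-1}\hat h(t)^2 - 2\hat h'(t) \;=\; \sigma + 2h'(\rho(t))\bigl(1 - \rho'(t)\bigr),
\]
and all four conclusions drop out immediately from $h'<0$ and $0 \leq \rho' \leq 1$. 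Phrasing the construction in terms of $\delta_h$ forgoes the crucial simplification that the ODE for $h$ is available at $\rho(t)$ rather than at $t$, which is what makes the reparametrization viewpoint so clean.

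Your verification of the differential estimate also has concrete gaps. The assertion that $\delta_h'$ is ``bounded below by a positive constant of order $|h'(a)|$ on the bulk'' does not follow from the bound on its mean, and in fact $\delta_h'$ must decay to zero before $a+\tau$, so no such uniform pointwise lower bound exists. The near-endpoint analysis is internally inconsistent: if $\beta$ agrees with $\tilde h'$ together with all derivatives on a right-neighborhood of $a+\tau$, then $\delta_h'$ and $\delta_h$ vanish \emph{identically} there, so a polynomial expansion $\delta_h'(t) \sim C(a+\tau-t)^k$ cannot hold; the actual danger zone is the interior point $c<a+\tau$ where $\delta_h'$ first becomes positive, and there a smooth $\delta_h'$ vanishes to infinite order, which your expansion cannot capture. (The final remark that $\beta$ is strictly negative on the open interior also contradicts the earlier requirement that $\beta \equiv 0$ near $a-\tau$.) The idea can be repaired --- for instance by choosing $\beta$ so that $\delta_h' = \beta - \tilde h'$ is monotone decreasing, whence $|\delta_h(t)| \leq (c-t)\,\delta_h'(t)$ on $\{\delta_h'>0\}$ and the estimate follows for $\tau$ small --- but as written the required pointwise bound on $\delta_h'$ relative to $|\delta_h|$ is not established, whereas the paper's reparametrization trick sidesteps the entire issue.
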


\begin{proof}
Let $\rho \colon \R\to[a,b]$ be a smooth function with:
\begin{itemize}
\item[$\triangleright$] $\rho(t)=a$ for $t\in(-\infty,a-\frac{\eps}{2}]$, $\rho(t)=t$ for $t\in[a+\frac{\eps}{2},b-\frac{\eps}{2}]$ and $\rho(t)=b$ for $t\in[b+\frac{\eps}{2},\infty)$.
\item[$\triangleright$] $0<\rho'(t)<1$ for $t\in(a-\frac{\eps}{2},a+\frac{\eps}{2})$ and $t\in(b-\frac{\eps}{2},b+\frac{\eps}{2})$.
\end{itemize} 
Then the function $\hat{h} \colon [a-\eps,b+\eps]\to\R$ defined by $\hat{h}=h\circ\rho$ has all of the desired properties.
The first two are immediate from the definition.
The third one holds since $\hat{h}'(t)=h'(\rho(t))\rho'(t)$ and $h'<0$ while $\rho'\geq0$.
To check the last property we point out that
\[-\frac{n}{n-1}\hat{h}^2(t)-2\hat{h}'(t)=-\frac{n}{n-1}h(\rho(t))-2h'(\rho(t))\rho'(t)=\sigma+2h'(\rho(t))(1-\rho'(t)).\]
Since $h'(\rho(t))<0$ and $0\leq\rho'\leq1$, the above is always $\leq \sigma$ and it is $<\sigma$ if $\rho'(t)<1$.
This holds true in particular when $\hat{h}'(t)=0$, that is, $\rho'(t)=0$.  
\end{proof}

In order to construct our functions $h_j\colon V_j\to\R$, we make use of the following basic existence result of band maps; for the proof, we refer to~\cites[Lemma~4.1]{Zhu:WidthEstimates}[Lemma~7.2]{Cecchini-Zeidler:ScalarMean}.

\begin{lem}\label{lem:widthmap}
Let $(V,g)$ be a Riemannian band and let $a<b$ be two real numbers. 
If $\wid(V,g)>b-a$, then there exists a smooth function $\beta \colon V\to[a,b]$ with $\beta(\p_-V)=a$, $\beta(\p_+V)=b$ and $\Lip(\beta)<1$.
\end{lem}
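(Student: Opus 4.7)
The plan is to build $\beta$ as a smooth approximation of a scaled and truncated distance function from $\p_-V$.

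Set $w \coloneqq \wid(V,g)>b-a$, pick $\eta>0$ small enough that $w-2\eta>b-a$, and define $\lambda \coloneqq (b-a)/(w-2\eta)\in(0,1)$. The function $f(x)\coloneqq\dist_g(x,\p_-V)$ is $1$-Lipschitz with $f\equiv 0$ on $\p_-V$ and $f(x)\geq w$ for $x\in\p_+V$. Consider the continuous function
\[
\tilde\beta(x)\coloneqq a+\lambda\cdot\max\bigl(0,\,\min(f(x)-\eta,\,w-2\eta)\bigr).
\]
By construction $\tilde\beta$ takes values in $[a,b]$, is $\lambda$-Lipschitz, is identically $a$ on the open neighborhood $U_-\coloneqq\{f<\eta\}$ of $\p_-V$, and is identically $b$ on the open neighborhood $U_+\coloneqq\{f>w-\eta\}\supset\p_+V$.

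Next, I would smooth $\tilde\beta$ only on the compact set $K\coloneqq V\setminus(U_-\cup U_+)$, where $\tilde\beta$ is non-constant. Fix $\delta>0$ with $\lambda+\delta<1$. Cover $K$ by finitely many precompact coordinate charts in which $g$ is $(1+\delta/\lambda)$-bi-Lipschitz equivalent to the Euclidean metric on the image, take a subordinate partition of unity, and mollify each local representation at a common scale $\epsilon>0$. Standard estimates for convolution with non-negative mollifiers on $\R^n$ show that, for $\epsilon$ sufficiently small, the resulting smooth function $\hat\beta$ is $(\lambda+\delta)$-Lipschitz on $K$ and uniformly close to $\tilde\beta$ there. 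Since $\tilde\beta$ is locally constant on $U_\pm$, by restricting the support of the mollification one arranges that $\hat\beta$ equals $a$ on a smaller neighborhood of $\p_-V$ and equals $b$ on a smaller neighborhood of $\p_+V$. Gluing these pieces produces the desired smooth $\beta$ with $\Lip(\beta)\leq\lambda+\delta<1$ and the prescribed boundary values.

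The main technical obstacle is the smoothing step: one must ensure that mollifying a Lipschitz function on a Riemannian manifold increases the Lipschitz constant by at most an arbitrarily small additive amount, while preserving the values of $\tilde\beta$ wherever it is already locally constant. In $\R^n$ this is immediate from Young's inequality and translation invariance of convolution, and the passage to a Riemannian manifold is handled by a partition-of-unity argument on sufficiently fine charts, as above; alternatively one can invoke the Greene--Wu smoothing theorem directly. The freedom to choose $\lambda<1$ strictly (afforded by the strict inequality $\wid(V,g)>b-a$) is what absorbs the small loss introduced by smoothing, and this is essentially the content of the proofs given in the cited references.
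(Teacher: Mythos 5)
The paper itself does not prove this lemma but refers to \cite[Lemma~4.1]{Zhu:WidthEstimates} and \cite[Lemma~7.2]{Cecchini-Zeidler:ScalarMean}; your argument is a correct, self-contained proof via the standard approach in those references: truncate and rescale the distance function to $\p_-V$ to get a $\lambda$-Lipschitz function ($\lambda<1$) that is locally constant near $\p_\pm V$, then smooth it, with the small additive loss in the Lipschitz constant absorbed by the slack coming from the strict inequality $\wid(V,g)>b-a$. The one spot that deserves care, gluing the chart-wise mollifications by a partition of unity without spoiling the Lipschitz bound, works because $\sum_i\nabla\rho_i=0$ lets you write the extra term as $\sum_i(\nabla\rho_i)(\hat\beta_i-\tilde\beta)$, which is controlled by the uniform closeness of each mollification to $\tilde\beta$; alternatively, as you note, Greene--Wu smoothing gives this directly.
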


Next, for a Riemannian band $(X,g)$, we give conditions on the scalar curvature, the mean curvature of $\partial X$ and on the potential function $h$ so that the $\mu$-bubble associated to $h$ produces a separating closed hypersurface admitting a metric of positive scalar curvature.
For more details on $\mu$-bubbles, we refer the reader to~\cites[Section~5]{gromovFourLecturesScalar2019v6}{Zhu:WidthEstimates}.

\begin{prop}\label{prop:bubbles}
Let $n\leq 7$ and let $(X,g)$ be an $n$-dimensional oriented Riemannian band. Let $h\colon X\to\R$ be a smooth function with the property that 
\begin{equation}\label{eq:ConformalLaplacianCondition}
    \Sc(X,g)+\frac{n}{n-1}h^2-2|\nabla h|>0.
\end{equation}
Furthermore, suppose that the mean curvature satisfies
\begin{equation}\label{eq:BarrierCondition}
    \mean (\p_\pm X,g)>\pm h\bigr|_{\p_\pm X}.
\end{equation}
Then there exist a closed embedded hypersurface $\Sigma$ which separates $\p_-X$ and $\p_+X$ and a constant $b>0$ such that 
\begin{equation}\label{eq:ConformalLaplacianEstimate}
    \int_\Sigma\Bigl( |\nabla_\Sigma\psi|^2+\frac{1}{2}\Sc(\Sigma,g)\psi^2\Bigr) \,d\vol_\Sigma
    \geq b\int_\Sigma\psi^2\,d\vol_\Sigma,\qquad\forall\psi\in C^{\infty}(\Sigma).
\end{equation}%\[-\Delta_\Sigma+\frac{1}{2}\Sc(\Sigma,g)>0.\] 
\end{prop}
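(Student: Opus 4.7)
The plan is to produce $\Sigma$ as the boundary of a minimizing $\mu$-bubble for the potential $h$. I fix a reference Caccioppoli set $\Omega_0 \subset X$ containing a collar neighborhood of $\p_- X$ and disjoint from a collar of $\p_+ X$, and consider the functional
\[
\mathcal{A}(\Omega) \coloneqq \Ha^{n-1}(\p^* \Omega \cap \mathring X) - \int_X (\chi_\Omega - \chi_{\Omega_0})\, h \, d\vol_g,
\]
defined on Caccioppoli sets $\Omega \subseteq X$ whose symmetric difference with $\Omega_0$ is compactly contained in $\mathring X$. The strict barrier condition \eqref{eq:BarrierCondition} is exactly what prevents any minimizing sequence from accumulating on $\p X$: near $\p_\pm X$, a comparison with the foliation by equidistant hypersurfaces shows that $\mathcal A$ can be strictly decreased by a local inward push, so the infimum is attained strictly inside $X$. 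Combined with lower semicontinuity of perimeter and BV compactness, this produces a minimizer $\Omega_*$ with $\p^* \Omega_* \subset \mathring X$.

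Next, classical regularity theory for Caccioppoli minimizers of prescribed mean curvature, applicable because $\dim X = n \leq 7$, implies that $\Sigma \coloneqq \p^* \Omega_*$ is a smooth closed embedded hypersurface separating $\p_- X$ from $\p_+ X$. The first variation of $\mathcal A$ at $\Omega_*$ gives the Euler--Lagrange equation $\mean_\Sigma = h|_\Sigma$, and the second variation yields the standard $\mu$-bubble stability inequality
\[
\int_\Sigma \bigl(|\nabla_\Sigma \psi|^2 - (|\mathrm{II}|^2 + \mathrm{Ric}_g(\nu,\nu) + \p_\nu h)\,\psi^2\bigr)\, d\vol_\Sigma \geq 0
\]
for every $\psi \in C^\infty(\Sigma)$.

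To conclude I invoke the traced Gauss equation $2\,\mathrm{Ric}_g(\nu,\nu) = \Sc_g - \Sc(\Sigma,g) - |\mathrm{II}|^2 + \mean_\Sigma^2$, which together with $\mean_\Sigma = h$ recasts the stability inequality as
\[
\int_\Sigma \bigl(|\nabla \psi|^2 + \tfrac12 \Sc(\Sigma,g)\, \psi^2\bigr) \geq \int_\Sigma \tfrac12\bigl(\Sc_g + |\mathrm{II}|^2 + h^2 + 2\p_\nu h\bigr)\,\psi^2.
\]
The Cauchy--Schwarz bound $|\mathrm{II}|^2 \geq \mean_\Sigma^2/(n-1) = h^2/(n-1)$ gives $\tfrac12 |\mathrm{II}|^2 + \tfrac12 h^2 \geq \tfrac{n}{2(n-1)} h^2$, and $\p_\nu h \geq -|\nabla h|$ bounds the derivative term, so the integrand on the right is at least $\tfrac12\bigl(\Sc_g + \tfrac{n}{n-1} h^2 - 2|\nabla h|\bigr)$. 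By the hypothesis \eqref{eq:ConformalLaplacianCondition} this expression is strictly positive pointwise on the compact hypersurface $\Sigma$, and compactness upgrades the pointwise strict inequality to a uniform lower bound, producing the constant $b > 0$.

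The main obstacle I expect is not the algebra, which is clean once the signs in the Gauss equation are tracked correctly, but rather the analytic step of securing the minimizer and confirming that it sits in the interior of $X$. The barrier condition \eqref{eq:BarrierCondition} is the crucial geometric input enabling this, while the dimensional restriction $n \leq 7$ is required to rule out singularities in the reduced boundary.
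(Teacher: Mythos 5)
Your proposal follows essentially the same route as the paper: minimize the same prescribed-mean-curvature functional over Caccioppoli sets, use the strict barrier inequality \eqref{eq:BarrierCondition} to keep the minimizer interior and the dimension bound $n\leq 7$ for regularity, and then combine the second-variation stability inequality with the traced Gauss equation, the Cauchy--Schwarz bound $|\mathrm{II}|^2\geq \mean^2/(n-1)$, and $\p_\nu h\geq -|\nabla h|$ to arrive at \eqref{eq:ConformalLaplacianEstimate}. The only difference is cosmetic: the paper outsources the existence, regularity, and variation formulas to [Rae21, Lemmas 4.2--4.4] and states the stability inequality already in the Gauss-rewritten form, whereas you spell out the algebra explicitly; both are correct.
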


\begin{proof}
Denote by $\mathcal{C}(X)$ the set of all Caccioppoli sets in $X$ which contain an open neighborhood of $\p_-X$ and are disjoint from $\p_+X$.
For $\hat{\Omega}\in\mathcal{C}(X)$ consider the functional
\begin{displaymath}
\mathcal{A}_h(\hat{\Omega})=\Ha^{n-1}(\p^*\hat{\Omega}\cap\mathring{X})-\int_{\hat{\Omega}}h\,d\Ha^n,
\end{displaymath}
where $\p^*\hat{\Omega}$ is the reduced boundary \cite[{Chapters~3,4}]{Giu84} of $\hat{\Omega}$. % and where $\Ha^q$ is the $q$-dimensional Hausdorff measure.
By Condition~\eqref{eq:BarrierCondition} and~\cite[{Lemma 4.2}]{Rae21}, there exists a smooth $\mu$-\emph{bubble} $\Omega\in\mathcal{C}(X)$, that is, a smooth Caccioppoli set with 
\[\mathcal{A}_h(\Omega)=\mathcal{I} \coloneqq \inf\{\mathcal{A}_h(\hat{\Omega})\bigr| \hat{\Omega}\in\mathcal{C}(X)\}.\]
Then $\Sigma\coloneqq\p \Omega\cap\mathring{X}$ is a closed embedded hypersurface that separates $\p_-X$ and $\p_+X$.
Let $\nu$ be the outward pointing unit normal vector field to $\Sigma$.
By the first variation formula for $\mathcal{A}_h$ (see \cite[{Lemma 4.3}]{Rae21}) the mean curvature of $\Sigma$ (computed with respect to $-\nu$) is equal to $h\bigr|_\Sigma$. By stability, from the second variation formula (see \cite[{Lemma 4.4}]{Rae21}) we deduce
\begin{align*}%\label{eq:key}
\int_\Sigma \Bigl(2|\nabla_\Sigma\psi|^2+\Sc(\Sigma,g)\psi^2\Bigr) \,d\vol_\Sigma
\geq&\int_\Sigma\Bigl(\Sc(X,g) +\frac{n}{n-1}h^2+2g(\nabla_Xh,\nu)\Bigr)\psi^2\,d\vol_\Sigma\\
\geq&\min_{\Sigma}\Bigl\{\Sc(X,g)+\frac{n}{n-1}h^2-2|\nabla h|\Bigr\}\int_\Sigma\psi^2\,d\vol_\Sigma
\end{align*}
for all $\psi\in C^{\infty}(\Sigma)$.
By Condition~\eqref{eq:ConformalLaplacianCondition}, the previous inequality yields a constant $b>0$ such that Inequality~\eqref{eq:ConformalLaplacianEstimate} holds.
\end{proof}

We have gathered all the ingredients we need to prove Theorem \ref{thm:general}.

\begin{proof}[Proof of \cref{thm:general}]
Assume, by contradiction, that $\wid(V_j,g)>\wid(M_j,g_{\varphi_j})$ for all $j\in\{1,\ldots,k+1\}$.
Consider the functions \[h_{\varphi_j}(t)=(n-1)\frac{\varphi_j'(t)}{\varphi_j(t)} \colon [a_j,b_j]\to\R.\]
Since the $\varphi_j$ are strictly $\log$-concave, the functions $h_{\varphi_j}$ are strictly monotonously decreasing and the scalar curvature of $(M_j,g_{\varphi_j})$ is  given by
\begin{equation}\label{warpODE}
\sigma_j=\Sc(M_j,g_{\varphi_j})=-\frac{n}{n-1}h_{\varphi_j}^2-2h'_{\varphi_j}.
\end{equation}
For $j\in\{2,\ldots,k\}$, we apply Lemma \ref{lem:potential} to $h_{\varphi_j}$ and obtain smooth functions
\[\hat{h}_{\varphi_j} \colon [a_j-\eps,b_j+\eps]\to\R\]
with the aforementioned properties.

For $j=1$ we extend the domain of $h_{\varphi_j}$ to $[a_1-\eps, b_1]$ and apply the interpolation procedure of Lemma \ref{lem:potential} only on the right hand side of the interval to produce $\hat{h}_{\varphi_1} \colon [a_1-\eps,b_1+\eps]\to\R$.
For $j=k+1$ we extend the domain of $h_{\varphi_{k+1}}$ to $[a_{k+1},b_{k+1}+\eps]$ and apply the interpolation procedure of Lemma \ref{lem:potential} only on the left hand side of the interval to produce $\hat{h}_{\varphi_{k+1}} \colon [a_{k+1}-\eps,b_{k+1}+\eps]\to\R$.

By Lemma \ref{lem:widthmap}, there are smooth maps 
\[\beta_j \colon V_j\to[a_j-\eps,b_j+\eps]\] 
such that $\beta_j(\p_-V_j)=a_j-\eps$, $\beta_j(\p_+V_j)=b_j+\eps$ and $\Lip(\beta_j)<1$.
Define $h\colon X\to\R$ by $h(x)=\hat{h}_{\varphi_j}\circ\beta_j(x)$ if $x\in V_j$.
The function $h$ is continuous since 
\[\hat{h}_{\varphi_j}(b_j+\eps)=\mean(\p_+M_j,g_{\varphi_j})=-\mean(\p_-M_{j+1},g_{\varphi_{j+1}})=\hat{h}_{\varphi_{j+1}}(a_{j+1}-\eps)\]
for all $j\in\{1,\ldots,k\}$.
It is smooth since the $\hat{h}_{\varphi_j}\circ\phi_j$ are constant in a neighborhood of the separating hypersurfaces $\Sigma_i$, which partition the band.

Note that $h$ satisfies Condition~\eqref{eq:ConformalLaplacianCondition} by the chain rule, the fourth property of the $\hat{h}_{\varphi_j}$ from Lemma \ref{lem:potential}, and since $\Lip(\beta_j)<1$.
For the mean curvature of the boundary, the following holds true:
\[\mean(\p_-X,g)\geq \mean(\p_-M_1,g_{\varphi_1})=-\hat{h}_{\varphi_1}(a_1)>-\hat{h}_{\varphi_1}(a_1-\eps)=-h\bigr|_{\p_-X}\] 
and 
\[\mean(\p_+X,g)\geq \mean(\p_+M_{k+1},g_{\varphi_{k+1}})=\hat{h}_{\varphi_{k+1}}(b_{k+1})>\hat{h}_{\varphi_{k+1}}(b_{k+1}+\eps)=h\bigr|_{\p_+X}.\]
Hence, Condition~\eqref{eq:BarrierCondition} is satisfied as well.
By Proposition \ref{prop:bubbles}, there exist a closed embedded hypersurface $\Sigma$ which separates $\p_-X$ and $\p_+X$ and a constant $b>0$ such that Inequality~\eqref{eq:ConformalLaplacianEstimate} holds. 

If $n=2$, this yields an immediate contradiction by choosing $\psi=1$ in~\eqref{eq:ConformalLaplacianEstimate}. If $n=3$, we again choose $\psi=1$ and use Gau\ss-Bonnet to see that $\Sigma$ admits a psc metric.
If $n\geq4$, then $\Sigma$ admits a metric of positive scalar curvature by the conformal change argument of Schoen and Yau \cite{SchoenYau:HypersurfaceMethod}. This contradicts the fact that $X$ has Property \ref{pB}.
\end{proof}

\begin{proof}[Proof of \cref{thm:analysis}]
Consider the function 
\[\varphi_2 \colon \left(-\frac{\pi}{\sqrt{\kappa}n},\frac{\pi}{\sqrt{\kappa}n}\right)\to\R_+ \qquad t\mapsto\cos\left(\frac{\sqrt{\kappa}nt}{2}\right)^{\frac{2}{n}},\]
which is strictly $\log$-concave and has
\[h_{\varphi_2}(t)=-\sqrt{\kappa}(n-1)\tan\left(\frac{\sqrt{\kappa}nt}{2}\right).\]
Consider the function 
\[\varphi_1 \colon \R_+\to\R_+ \qquad t\mapsto t^{\frac{2}{n}},\]
which is strictly $\log$-concave and has
\[h_{\varphi_1}(t) \colon =\frac{2(n-1)}{nt}.\]
Since $h_{\varphi_1}(t)\to\infty$ as $t\to0$, there is a value $t_->0$ such that $H(\p_-X,g)\geq -h_{\varphi_1}(t_-)$.
By continuity, there are $\delta_1,\delta_2>0$ small enough such that
\[h_{\varphi_2}\left(\frac{-d+\delta_1}{2}\right)=h_{\varphi_1}(\ell+\delta_2),\]
while $\delta_2<t_-$ and hence $\ell+\delta_2-t_-<\ell$.
Let $(N,g_N)$ be a closed scalar flat Riemannian manifold.
We fix the model space
\[(M_1,g_{\varphi_1})=(N\times[t_-,\ell+\delta_2], \varphi_1^2(t)g_N+dt^2)\]
with scalar curvature equal to zero and $\wid<\ell$.

Let $\varphi_3 \colon \R_-\to\R_+$ be defined by $\varphi_3(t)=\varphi_1(-t)$. 
This function is strictly $\log$-concave and $h_{\varphi_3}(t)=-h_{\varphi_1}(-t)$. 
Since $h_{\varphi_3}(t)\to-\infty$ as $t\to0$, there is a value $t_+<0$ such that $H(\p_+X,g)\geq h_{\varphi_3}(t_+)$.
Similarly as before, we find $\delta_3,\delta_4>0$ such that
\[h_{\varphi_2}\left(\frac{d-\delta_3}{2}\right)=h_{\varphi_3}(-\ell-\delta_4),\]
while $\delta_4<-t_+$ and hence $\ell+\delta_4+t_+<\ell$.
We fix the model space
\[(M_3,g_{\varphi_3})=(N\times[t_-,\ell+\delta_4], \varphi_3^2(t)g_N+dt^2)\]
with scalar curvature equal to zero and $\wid<\ell$.

Finally, we fix the model space 
\[(M_2,g_{\varphi_2})=\left(N\times\left[\frac{-d+\delta_1}{2},\frac{d-\delta_3}{2}\right], \varphi_2^2(t)g_N+dt^2\right)\]
with scalar curvature equal to $\kappa n (n-1)$ and $\wid<d$.

It follows from Theorem \ref{thm:general} that $\wid(V_j,g)\leq\wid(M_j,g_{\varphi_j})$ for at least one $i\in\{1,2,3\}$.
Since $d=\wid(V_2,g)>\wid(M_2,g_{\varphi_2})$, we conclude that \[\min\{\wid(V_1,g),\wid(V_3,g)\}< \ell,\]
which is what we wanted to prove.
\end{proof}

\begin{proof}[Proof of \cref{thm:analysis1}]
Consider the function 
\[\varphi_2 \colon \left(-\frac{\pi}{\sqrt{\kappa}n},\frac{\pi}{\sqrt{\kappa}n}\right)\to\R_+ \qquad t\mapsto\cos\left(\frac{\sqrt{\kappa}nt}{2}\right)^{\frac{2}{n}},\]
which is strictly $\log$-concave and has
\[h_{\varphi_2}(t)=-\sqrt{\kappa}(n-1)\tan\left(\frac{\sqrt{\kappa}nt}{2}\right).\]
Consider the function 
\[\varphi_1 \colon \R_+\to\R_+ \qquad t\mapsto\sinh\left(\frac{\sqrt{\sigma n}t}{2\sqrt{n-1}}\right)^{\frac{2}{n}},\]
which is strictly $\log$-concave and has
\[h_{\varphi_1}(t) \coloneqq \sqrt{\frac{\sigma(n-1)}{n}}\coth\left(\frac{\sqrt{\sigma n}t}{2\sqrt{n-1}}\right).\]

Since $h_{\varphi_1}(t)\to\infty$ as $t\to0$, there is a value $t_->0$ such that the mean curvature $H(\p_-X,g)$ is greater or equal to $-h_{\varphi_1}(t_-)$.
By continuity, there are $\delta_1,\delta_2>0$ small enough such that $-\sigma>-\kappa n(n-1)\tan\left(\frac{\sqrt{\kappa}n(d-2\delta_1)}{4}\right)^2$ and 
\[h_{\varphi_2}\left(\frac{-d+\delta_1}{2}\right)=h_{\varphi_1}(\ell+\delta_2),\]
while $\delta_2<t_-$ and hence $\ell+\delta_2-t_-<\ell$.
Let $(N,g_N)$ be a closed scalar flat Riemannian manifold.
We fix the model space
\[(M_1,g_{\varphi_1})=(N\times[t_-,\ell+\delta_2], \varphi_1^2(t)g_N+dt^2)\]
with scalar curvature equal to $-\sigma$ and $\wid<\ell$.

Let $\varphi_3 \colon \R_-\to\R_+$ be defined by $\varphi_3(t)=\varphi_1(-t)$.
This function is strictly $\log$-concave and $h_{\varphi_3}(t)=-h_{\varphi_1}(-t)$. 
Since $h_{\varphi_3}(t)\to-\infty$ as $t\to0$, there is a value $t_+<0$ such that $H(\p_X,g)\geq h_{\varphi_3}(t_+)$.
Similarly as before, we find $\delta_3,\delta_4>0$ such that 
$-\sigma>-\kappa n(n-1)\tan\left(\frac{\sqrt{\kappa}n(d-2\delta_3)}{4}\right)^2$ and 
\[h_{\varphi_2}\left(\frac{d-\delta_3}{2}\right)=h_{\varphi_3}(-\ell-\delta_4),\]
while $\delta_4<-t_+$ and hence $\ell+\delta_4+t_+<\ell$.
We fix the model space
\[(M_3,g_{\varphi_3})=(N\times[-\ell-\delta_4,t_+], \varphi_3^2(t)g_N+dt^2)\]
with scalar curvature equal to $-\sigma$ and $\wid<\ell$.

Finally we fix the model space 
\[(M_2,g_{\varphi_2})=\left(N\times\left[\frac{-d+\delta_1}{2},\frac{d-\delta_3}{2}\right], \varphi_2^2(t)g_N+dt^2\right)\]
with scalar curvature equal to $\kappa n (n-1)$ and $\wid<d$.

It follows from Theorem \ref{thm:general} that $\wid(V_j,g)\leq\wid(M_j,g_{\varphi_j})$ for at least one $i\in\{1,2,3\}$.
Since $d=\wid(V_2,g)>\wid(M_2,g_{\varphi_2})$, we conclude that \[\min\{\wid(V_1,g),\wid(V_3,g)\}< \ell,\]
which is what we wanted to prove.
\end{proof}

{
    \hypersetup{hidelinks}
    \printbibliography
}

\end{document}